\DeclareMathOperator{\spa}{span}
\DeclareMathOperator{\coker}{coker}
\DeclareMathOperator{\im}{Im}
\newcommand{\labe}{\mathcal{L}}
\newcommand{\bool}{\mathcal{B}}
\newcommand{\boolj}{\hbool_J}
\newcommand{\tbool}{\tilde{\bool}}
\newcommand{\hbool}{\bool}
\newcommand{\Aalg}[1][E,\labe,\bool]{\mathcal{A}(#1)}
\newcommand{\cha}[1]{\chi_{#1}}
\newcommand{\inv}{^{-1}}
\newcommand{\spc}{\overline{\spa}}
\newcommand{\sink}{E^0_{\text{sink}}}
\newcommand{\Oalg}{\mathcal{O}}
\newcommand{\Z}{\mathbb{Z}}
\newcommand{\N}{\mathbb{N}}
\newcommand{\Kalg}{\mathscr{K}}
\newcommand{\ideal}{\mathcal{I}}
\newcommand{\Jalg}{J_{(X(E,\labe,\bool),\varphi)}}
\newcommand{\Xalg}{\mathcal{X}}
\newcommand{\OSS}{{\mathsf{X}}}
\newcommand{\DX}{{\widetilde{\mathcal{D}}_\OSS}}
\newcommand{\alwords}{\labe^\#(E)}
\newcommand{\emptyword}{\epsilon}
\theoremstyle{plain}
\newtheorem{theorem}{Theorem}[section]
\newtheorem{lemma}[theorem]{Lemma}
\newtheorem{corollary}[theorem]{Corollary}
\newtheorem{proposition}[theorem]{Proposition}
\theoremstyle{definition}
\newtheorem{remark}[theorem]{Remark}
\newtheorem{definition}[theorem]{Definition}
\newtheorem{example}[theorem]{Example}
\begin{document}

\title{$C^*$-algebras of labelled graphs III - $K$-theory computations}
\date{\today}

\author{Teresa Bates}
\address{Teresa Bates \\
School of Mathematics and Statistics \\
The University of NSW \\
UNSW  Sydney 2052 \\ AUSTRALIA} \email{teresa@unsw.edu.au}

\author{Toke Meier Carlsen}
\address{Toke Meier Carlsen \\
Department of Mathematical Sciences \\
Norwegian University of Science and Technology (NTNU) \\
7491 Trondheim \\
NORWAY} \email{Toke.Meier.Carlsen@math.ntnu.no}

\author{David Pask}
\address{David Pask \\
School of Mathematics and Applied Statistics \\
Austin Keane Building (15) \\
University of Wollongong \\
NSW 2522 \\ AUSTRALIA} \email{dpask@uow.edu.au}

\thanks{This research was supported by the Australian Research Council, The Research Council of Norway and The Danish Natural Science Research Council.}

\begin{abstract}
In this paper we give a formula for the $K$-theory of the $C^*$-algebra of a weakly left-resolving labelled space. This is done by realising the $C^*$-algebra of a weakly left-resolving labelled space as the Cuntz-Pimsner algebra of a $C^*$-correspondence. As a corollary we get a gauge invariant uniqueness theorem for the $C^*$-algebra of any weakly left-resolving labelled space. In order to achieve this we must modify the definition of the $C^*$-algebra of a weakly left-resolving labelled space.

We also establish strong connections between the various classes of $C^*$-algebras which are associated with shift spaces and labelled graph algebras. Hence, by computing the $K$-theory of a labelled graph algebra we are providing a common framework for computing the $K$-theory of graph algebras, ultragraph algebras, Exel-Laca algebras, Matsumoto algebras and the $C^*$-algebras of Carlsen.

We provide an inductive limit approach for computing the $K$-groups of an important class of labelled graph algebras, and give examples.
\end{abstract}

\maketitle

\section{Introduction}

The purpose of this paper is to provide a formula for and a practical method of computing the $K$-theory of the $C^*$-algebra $C^*(E,\mathcal{L},\mathcal{B})$ of a weakly left-resolving labelled space $(E,\mathcal{L},\mathcal{B})$ (see Theorem \ref{theorem:ktheory} and Theorem \ref{thm:ktheorym}). To do this we first realise $C^*(E,\mathcal{L},\mathcal{B})$ as a Cuntz-Pimsner algebra (this is done in Theorem \ref{theorem:main}, see also Remark \ref{remark:nn}) and then use the results of \cite{MR2102572} to identify the $K$-groups in question as the kernel and cokernel of a certain map. Then, for an important class of labelled graph $C^*$-algebras, we give a
procedure for computing these groups through an inductive limit process (see Theorem \ref{thm:howtocompute}). Our construction of $C^*(E,\mathcal{L},\mathcal{B})$ as a Cuntz-Pimsner algebra also allows us to derive a gauge invariant uniqueness theorem for $C^*(E,\mathcal{L},\mathcal{B})$ (see Corollaries \ref{cor:giut} and \ref{cor:giutm}) and a proof that $C^*(E,\mathcal{L},\mathcal{B})$ is nuclear (see Corollary \ref{corol:iota} and Remark \ref{remark:nn}), and, under a suitable countability condition, satisfies the Universal Coefficient Theorem of \cite{MR894590} (see Corollary \ref{cor:nuclear} and Remark \ref{remark:nn}).

As was already pointed out in \cite{bp3}, the original definition of $C^*(E,\mathcal{L},\mathcal{B})$ has to be modified in order to avoid degeneracy of certain projections when $E$ has sinks. Furthermore, in \cite{BPW} it was pointed out that the Gauge Invariant Uniqueness Theorem stated in \cite{MR2304922} was incorrect and a new (and correct) gauge invariant uniqueness theorem for $C^*(E,\mathcal{L},\mathcal{B})$ when $(E,\mathcal{L},\mathcal{B})$, in addition to being weakly left-resolving, is so-called set-finite (every set of vertices in $\mathcal{B}$ emits a finite number of different labels) and $\mathcal{B}$ is closed under relative complements was proved. In order to get a gauge invariant uniqueness theorem for the $C^*$-algebra of any weakly left-resolving labelled space it is necessary to change the definition of $C^*(E,\mathcal{L},\mathcal{B})$ in the case where $\mathcal{B}$ is not closed under relative complements (see the appendix for details). Since this makes things a bit more complicated, we have chosen to only work with weakly left-resolving labelled spaces $(E,\mathcal{L},\mathcal{B})$ where $\mathcal{B}$ is closed under relative complements (we call such  labelled spaces  \emph{normal}, see Subsection \ref{sec:labelled space}) in the main part of the paper, but in the appendix we show how to deal with the general case.

The motivation for our $K$-theory computations is to investigate the relationship between certain dynamical invariants of shift spaces and the $K$-theoretical invariants of $C^*$-algebras associated to these shift spaces. This connection was first brought to light in the work of Cuntz and Krieger in \cite{ck} where they showed how to associate a $C^*$-algebra $\mathcal{O}_A$ to a finite $0$-$1$ matrix $A$ with no zero rows or columns, provided that the matrix satisfied a certain condition called (I). In \cite[Proposition 3.1]{c} it was shown that the $K$-groups of a Cuntz-Krieger algebra are isomorphic to the Bowen-Franks groups of the shift of finite type $\textsf{X}_A$ associated to $A$ (see \cite{BF}). Thus, a deep connection was established between the combinatorially-defined $C^*$-algebra $\mathcal{O}_A$ and the (one-sided) shift of finite type $\textsf{X}_A$. Several generalisations of Cuntz-Krieger algebras have now been widely studied.

Combining the universal algebra approach of \cite{aHR} and the graphical approach to Cuntz-Krieger algebras begun in \cite{ew}, graph algebras were introduced in \cite{kpr}. Graph algebras were originally defined for graphs satisfying a finiteness condition -- the need for this condition was removed by Fowler and Raeburn in \cite{fr} (see also \cite{flr}). Using a different approach, Exel and Laca showed how to associate a $C^*$-algebra to an infinite $0$-$1$ matrix with no zero rows or columns in \cite{el}. A link between graph algebras and Exel-Laca algebras was provided by the ultragraph algebras introduced by Tomforde (see \cite{tom,tom2}).

Motivated by the symbolic dynamical data contained in a Cuntz-Krieger algebra, Matsumoto provided a generalisation of Cuntz-Krieger algebras by associating to an arbitrary two-sided shift space $\Lambda$ over a finite alphabet a $C^*$-algebra $\mathcal{O}_\Lambda$ (see \cite{MR1454478, MR1646513, MR1691469, MR1688137, MR1774695, MR1764930}). Later Carlsen and Matsumoto modified this construction (see \cite{MR2091486, MR1852456}), and Carlsen further modified the definition of $\mathcal{O}_\Lambda$ and extended it to one-sided shift spaces in \cite{TMC7} (see \cite{CS} for a discussion of the relationship between the three different definitions of $\mathcal{O}_\Lambda$ and for an alternative construction of the $C^*$-algebra constructed in \cite{TMC7}). The results of \cite{MR1646513, MR1691469, m99, MR1869072, MR1852456} give connections between certain dynamical invariants of $\Lambda$ and $K$-theoretical invariants of $\mathcal{O}_\Lambda$ in the same spirit as \cite{c}.

By adapting the left-Krieger cover construction given in \cite{MR0776312}, any shift space over a countable alphabet may be presented by a left-resolving labelled graph. Hence, the labelled graph algebras introduced in \cite{MR2304922} provide a method for associating a $C^*$-algebra to a shift space over a countable alphabet. As we shall see in Section \ref{sec:examples}, the class of labelled graph algebras contains the class of graph algebras and all of the classes of $C^*$-algebras discussed above. By computing the $K$-theory of a labelled graph algebra we will therefore be providing a unified approach to computing the $K$-theory of this wide collection of $C^*$-algebras.

The paper is organised as follows: Section \ref{sec:bg} contains some background on labelled graphs, labelled spaces and their $C^*$-algebras. In Section \ref{sec:lga=cpa} we give our key result, Theorem \ref{theorem:main}, which shows that the $C^*$-algebra of a weakly left-resolving normal labelled space may be realised as a Cuntz-Pimsner algebra in the sense of \cite{MR2102572}, and we also show a gauge invariant uniqueness theorem for it (Corollary \ref{cor:giut}) and that it is nuclear (Corollary \ref{corol:iota}), and, under a suitable countability condition, satisfies the Universal Coefficient Theorem of \cite{MR894590} (Corollary \ref{cor:nuclear}). In Section \ref{sec:k-theory} we use the general results of \cite{MR2102572} to provide a formula for the $K$-theory of the $C^*$-algebra of a weakly left-resolving normal labelled space (see Theorem \ref{theorem:ktheory}). Then in Section \ref{sec:examples} we show how our $K$-theory formulas reduce to those for graph algebras, ultragraph algebras and Matsumoto algebras in the appropriate cases. Furthermore in Proposition \ref{proposition:oss} we realise the Carlsen algebra $C^* ( \OSS )$ associated to a one-sided shift space $\OSS$ as a labelled graph algebra and hence compute its $K$-theory (see Corollary~\ref{cor:k-oss}). In Section \ref{sec:howdismantleanatombomb} we show how to compute the $K$-theory of certain labelled graph algebras as inductive limits; this is done in analogy with the computations of \cite{MR1911208}, and in Section \ref{sec:computations} we provide a few examples of the computations outlined in Section \ref{sec:howdismantleanatombomb}. Finally, in the appendix we give an example that illustrates the necessity of changing the definition of the $C^*$-algebra of a weakly left-resolving labelled space which is not normal in order to obtain a gauge invariant uniqueness theorem.  We state this new definition and generalise the main results of Sections \ref{sec:lga=cpa} and \ref{sec:k-theory} to the non-normal case.

\subsection*{Acknowledgements}

The authors wish to thank the Fields Institute and the Banff International Research Station for providing the facilities
for them to discuss various aspects of this paper.

\section{Labelled spaces and their $C^*$-algebras} \label{sec:bg}

\noindent
In this section we will  briefly review the definitions of labelled spaces. We also introduce the notion of a \emph{normal} labelled space and associate a $C^*$-algebras with any weakly left-resolving normal labelled space.

\subsection*{Directed graphs}

A directed graph $E$ consists of a quadruple $( E^0 , E^1, r , s )$ where $E^0$ and $E^1$ are sets of vertices and edges, respectively and $r, s : E^1 \to E^0$ are maps giving the direction of each edge. A path $\lambda = \lambda_1 \ldots \lambda_n$ is a sequence of edges $\lambda_i \in E^1$ such that $r ( \lambda_i ) = s ( \lambda_{i+1})$ for $i=1 , \ldots , n-1$; we define $s ( \lambda ) = s ( \lambda_1 )$ and $r ( \lambda ) = r ( \lambda_n )$. The collection of paths of length $n$ in $E$ is denoted by $E^n$ and the collection of all finite paths in $E$ by $E^*$, so that $E^* = \bigcup_{n \ge 1} E^n$.

A \emph{loop} in $E$ is a path which begins and ends at the same vertex, that is $\lambda \in E^*$ with $s ( \lambda ) =  r ( \lambda )$. We say that $E$ is \emph{row-finite} if every vertex emits finitely many edges. We denote the collection of all infinite paths in $E$ by $E^\infty$.  A vertex $v \in E^0$ is an {\em sink} if $s^{-1}(v) = \emptyset$ and we define $\sink$ to be the set of all sinks in $E^0$.

\subsection*{Labelled graphs}

A \emph{labelled graph} $( E , \mathcal{L}  )$ over a countable alphabet $\mathcal{A}$ consists of a directed graph $E$ together with a labelling map $\mathcal{L}  : E^1 \to \mathcal{A}$. By replacing $\mathcal{A}$ with $\mathcal{L} (E^1)$ (if necessary) we may assume that the map $\mathcal{L} $ is onto.

Let $\mathcal{A}^*$ be the collection of all {\em words} in the symbols of $\mathcal{A}$. The map $\mathcal{L} $ extends naturally to a map $\mathcal{L}  : E^n \to \mathcal{A}^*$, where $n \ge 1$. For $\lambda = e_1 \ldots e_n \in E^n$ we set $\mathcal{L}  ( \lambda ) = \mathcal{L}  ( e_1 ) \ldots \mathcal{L} ( e_n )$. In this case the path $\lambda \in E^n$ is said to be a {\em representative} of the {\em labelled path} $\mathcal{L}  ( e_1 ) \ldots \mathcal{L}  ( e_n )$. Let $\mathcal{L} ( E^n )$ denote the collection of all labelled paths in $(E,\mathcal{L} )$ of length $n$ where we write $|\alpha|=n$ if $\alpha \in \mathcal{L}( E^n )$. The set $\mathcal{L}^* (E ) = \bigcup_{n \ge 1} \mathcal{L} ( E^n )$ is the collection of all labelled paths in the labelled graph $(E, \mathcal{L}  )$.

The labelled graph $( E , \mathcal{L}  )$ is {\em left-resolving} if for all $v \in E^0$ the map $\mathcal{L}  : r^{-1} (v) \to \mathcal{A}$ is injective. The left-resolving condition ensures that for all $v \in E^0$ the labels of the incoming edges to $v$ are all different. For $\alpha \in \mathcal{L}^* ( E )$ we put
\begin{equation*}
s_\mathcal{L}  ( \alpha ) = \{ s ( \lambda ) \in E^0 : \mathcal{L} ( \lambda ) = \alpha \} \text{ and } r_\mathcal{L}  ( \alpha ) = \{ r ( \lambda ) \in E^0 : \mathcal{L}  ( \lambda ) = \alpha \} ,
\end{equation*}
so that $r_\mathcal{L}  , s_\mathcal{L}  : \mathcal{L}^* ( E ) \to 2^{E^0}$, where $2^{E^0}$ denotes the set of subsets of $E^0$. We shall drop the subscript on $r_\mathcal{L} $ and $s_\mathcal{L} $ if the context in which it is being used is
clear.

For $A \subseteq E^0$ and $\alpha \in \mathcal{L}^* (E)$ the \emph{relative range of
$\alpha$ with respect to $A$} is defined to be
\begin{equation*}
r_{\mathcal L}(A,\alpha) = \{ r ( \lambda ) : \lambda \in E^* ,\ \mathcal{L}  ( \lambda ) = \alpha ,\ s ( \lambda ) \in A \} .
\end{equation*}

A collection $\mathcal{B}  \subseteq 2^{E^0}$ of subsets of $E^0$ is said to be \emph{closed under relative ranges for $(E , \mathcal{L} )$} if for all $A \in \mathcal{B} $ and $\alpha \in \mathcal{L}^* (E )$ we have $r (A,\alpha) \in \mathcal{B} $. If $\mathcal{B} $ is closed under relative ranges for $(E , \mathcal{L} )$, contains $r ( \alpha )$ for all $\alpha \in \mathcal{L}^* (E  )$ and is also closed under finite intersections and unions, then we say that $\mathcal{B} $ is \emph{accommodating} for $(E, \mathcal{L} )$. If $\mathcal{B}$ in addition is closed under relative complements, then we say that it is \emph{non-degenerate}.

We are particularly interested in the sets of vertices which are the ranges of words in $\mathcal{L}^* (E)$, so we form
\begin{equation*}
\mathcal{E}^- =\bigl\{ \{ v \} : v \in\sink \bigr\} \cup \bigl\{ r ( \alpha ) : \alpha \in \mathcal{L}^* (E ) \bigr\} .
\end{equation*}

\noindent
We then define $\mathcal{E}^{0,-}$ to be the smallest subset of $2^{E^0}$ which contains $\mathcal{E}^-$ and is accommodating and non-degenerate for $( E,\mathcal{L} )$. Of course, $2^{E^0}$ is the largest accommodating and non-degenerate collection of subsets for $(E,{\mathcal L})$.

\subsection*{Labelled spaces} \label{sec:labelled space}

A \emph{labelled space} consists of a triple $(E , \mathcal{L}  , \mathcal{B}  )$ where $(E , \mathcal{L}  )$ is a labelled graph and $\mathcal{B}$ is accommodating for $(E , \mathcal{L}  )$. If, in addition, $\mathcal{B}$ is non-degenerate, then we say that the labelled space $(E , \mathcal{L}  , \mathcal{B}  )$ is \emph{normal}.

A labelled space $(E , \mathcal{L}  , \mathcal{B}  )$ is \emph{weakly left-resolving} if for every $A , B \in \mathcal{B} $ and every $\alpha \in \mathcal{L}^* ( E )$ we have $r ( A , \alpha ) \cap r ( B , \alpha ) = r ( A \cap B , \alpha)$. If $( E , \mathcal{L} )$ is left-resolving then $(E, \mathcal{L} , \mathcal{B} )$ is weakly left-resolving for any accommodating $\mathcal{B} \subseteq 2^{E^0}$.

Let $(E , \mathcal{L} )$ be a labelled graph. For $A \subseteq E^0$ let $\mathcal{L} ( A E^1 ) = \{ \mathcal{L} (e) : e\in E^1,\ s(e) \in A \}$ (the set $\mathcal{L} ( A E^1 )$ was denoted $L^1_A$ in \cite{MR2304922}).


\subsection*{$C^*$-algebras of normal labelled spaces} \label{CoLS}

We will now define representations of weakly left-resolving normal labelled spaces, and the $C^*$-algebra associated with a weakly left-resolving normal labelled space $(E,\mathcal{L},\mathcal{B})$. Notice that unlike \cite{MR2304922}, we require $(E,\mathcal{L},\mathcal{B})$ to be normal. The reason is that if we do not assume that $(E,\mathcal{L},\mathcal{B})$ is normal, then the $C^*$-algebra $C^*(E,\mathcal{L},\mathcal{B})$ associated with  $(E,\mathcal{L},\mathcal{B})$ might not satisfy the Gauge Invariant Uniquiness Theorem as stated in \cite[Theorem 5.3]{MR2304922} (see \cite[Remark 2.5]{BPW}). In the appendix we will give an example of what can go wrong if $(E,\mathcal{L},\mathcal{B})$ is not normal and explain how the definition of a representation of $(E,\mathcal{L},\mathcal{B})$ must be changed in order to obtain a gauge invariant uniqueness theorem for $C^*(E,\mathcal{L},\mathcal{B})$ when $(E,\mathcal{L},\mathcal{B})$ is not normal.

\begin{definition} \label{lgdef}
Let $( E , \mathcal{L}  , \mathcal{B} )$ be a weakly left-resolving normal labelled space. A \emph{representation} of $( E , \mathcal{L} , \mathcal{B} )$ in a $C^*$-algebra consists of projections $\{ p_A : A \in \mathcal{B} \}$ and partial isometries $\{ s_a : a \in \mathcal{A} \}$ with the properties that:
\begin{itemize}

\item[(i)] If $A, B \in \mathcal{B} $, then $p_A p_B = p_{A \cap B}$ and $p_{A \cup B} = p_A + p_B - p_{A \cap B}$, where $p_\emptyset = 0$.

\item[(ii)] If $a \in \mathcal{A}$ and $A \in \mathcal{B} $, then $p_A s_a = s_a p_{r ( A, a )}$.

\item[(iii)] If $a , b \in \mathcal{A}$, then $s_a^* s_a = p_{r( a )}$ and $s_{a}^*s_{b} = 0$ unless $a = b$.

\item[(iv)] For $A \in \mathcal{B} $  with $\mathcal{L}(A E^1)$ finite and $A \cap B = \emptyset$ for all $B\in\mathcal{B}$ satisfying $B\subseteq\sink$, we have
\begin{equation*}
p_A = \sum_{a \in \mathcal{L} ( A E^1 )} s_{a} p_{r( A , a )} s_{a}^* .
\end{equation*}
\end{itemize}
\end{definition}

\begin{remark}
Notice that if the directed graph $E$ contains sinks, then condition (iv) is different from condition (iv) of the original definition \cite[Definition 4.1]{MR2304922}. The original definition \cite[Definition 4.1]{MR2304922} was in error since it would lead to degeneracy of the vertex projections for sinks.
\end{remark}

\begin{remark}
Notice also that if $A$ is any set in $\bool$ with $\mathcal{L}(A E^1)$ finite and $A\cap\sink\in\mathcal{B}$, then combining (i) and (iv) we obtain the relation
\begin{equation*}
p_A = p_{A\cap\sink} + \sum_{a \in \mathcal{L} ( A E^1 )} s_{a} p_{r( A , a )} s_{a}^*,
\end{equation*}
cf. \cite[Remark 3.2]{bp3}.
\end{remark}

\begin{remark}
Suppose that $\{v\}\in\mathcal{B}$ for all $v\in\sink$. Then  for any $A\in\mathcal{B}$,  the condition $$A \cap B = \emptyset \text{ for all } B\in\mathcal{B} \text{ satisfying } B\subseteq\sink$$ that appears in (iv) is equivalent to the condition $$A\cap\sink=\emptyset.$$
\end{remark}

\begin{definition} \label{def:celb}
Let $( E , \mathcal{L}  , \mathcal{B}  )$ be a  weakly left-resolving normal labelled space. Then $C^* ( E , \mathcal{L}  , \mathcal{B}  )$ is the $C^*$-algebra generated by a universal representation of $( E , \mathcal{L} , \mathcal{B}  )$.
\end{definition}

The existence of $C^* ( E , \mathcal{L}  , \mathcal{B}  )$ is shown by an argument similar to the one given in \cite[Theorem 4.5]{MR2304922}. The existence of $C^* ( E , \mathcal{L}  , \mathcal{B}  )$ will also follow from Theorem \ref{theorem:main}. The universal property of $C^*(E, \mathcal{L}, \mathcal{B})$ allows us to define a strongly continuous action $\gamma$ of ${\bf T}$ on $C^*(E,\mathcal{L},\mathcal{B})$ called the {\em gauge action} (see \cite[Section 5]{MR2304922}).

\begin{remark}
Let $( E , \mathcal{L}  , \mathcal{B}  )$ be a  weakly left-resolving normal labelled space, and let $\{ p_A,\ s_a : A \in \mathcal{B},\ a \in \mathcal{A} \}$ be the universal representation of $( E , \mathcal{L}  , \mathcal{B}  )$ that generates $C^* ( E , \mathcal{L}  , \mathcal{B}  )$. For $\alpha=\alpha_1\dots\alpha_n\in\mathcal{L}(E^n)$ (where $n\ge 2$), we let $s_\alpha=s_{\alpha_1}\cdots s_{\alpha_n}$. Let $\alwords={\mathcal L}^*(E)\cup\{\emptyword\}$ where $\emptyword$ is a symbol not belonging to ${\mathcal L}^*(E)$ ($\emptyword$ denotes the empty word), and let $s_\emptyword$ denote the unit of the multiplier algebra of $C^*(E,{\mathcal L},{\mathcal B})$. It then follows from \cite[Lemma 4.4]{MR2304922} that we have
\begin{equation*}
C^*(E,{\mathcal L},{\mathcal B}) = \spc\{s_\alpha p_A s_\beta^* \;:\; \alpha,\beta \in \alwords,\ A \in {\mathcal B}\}.
\end{equation*}
\end{remark}

\section{Viewing Labelled Graph $C^*$-algebras as $C^*$-algebras of $C^*$-correspondences} \label{sec:lga=cpa}

\noindent
Let $(E,{\mathcal L},{\mathcal B})$ be a weakly left-resolving normal labelled space. In this section we will show how to construct a $C^*$-correspondence $X(E,{\mathcal L},{\mathcal B})$ whose Cuntz-Pimsner algebra (see \cite{MR2102572} and \cite{p}) is isomorphic to $C^*(E,{\mathcal L},{\mathcal B})$.

It should be noticed that this construction also works if $(E,{\mathcal L},{\mathcal B})$ is not normal (see the appendix).

\begin{remark}
In \cite{rs} it is shown that for every weakly left-resolving labelled space $(E',{\mathcal L}',{\mathcal B}')$ satisfying $\{\{v\}:v\in (E')^0_{\text{sink}}\}\subseteq \mathcal{B}'$, there exists a $C^*$-correspondence for which the corresponding Cuntz-Pimsner algebra is isomorphic to $C^*(E',{\mathcal L}',{\mathcal B}')$. Although the construction of the $C^*$-correspondence given in \cite{rs} is similar to the one given below, the coefficient algebra of the $C^*$-correspondence constructed in \cite{rs} does not seem to be useful for obtaining a formula for the $K$-theory of the C*-algebra of the labelled space as is done in Section \ref{sec:k-theory}.
\end{remark}

For each $A\in 2^{E^0}$ we let $\cha{A}$ denote the function defined on $E^0$ by
\begin{equation*}
\cha{A}(v)=
\begin{cases}
1&\text{if }v\in A,\\
0&\text{if }v\notin A.
\end{cases}
\end{equation*}
We will regard $\cha{A}$ as an element of the $C^*$-algebra of bounded functions on $E^0$ (when we calculate the $K$-theory we will regard $\cha{A}$ as an element of the group of functions from $E^0$ to $\mathbb{Z}$). Let $\Aalg$ denote the $C^*$-subalgebra of the $C^*$-algebra of bounded functions on $E^0$ generated by $\{\cha{A} : A \in \bool\}$.

\begin{lemma} \label{lemma:map}
Let $\{p_A : A \in \bool \}$ be a family of projections in a $C^*$-algebra $\Xalg$ such that $p_{A\cap B} = p_A p_B$, $p_{A\cup B } = p_A + p_B - p_{A \cap B}$ for all $A , B \in \bool$ and $p_\emptyset = 0$. Then there is a unique $*$-homomorphism $\phi : \Aalg \to \Xalg$ such that $\phi(\cha{A}) = p_A$ for all $A \in \bool$.
\end{lemma}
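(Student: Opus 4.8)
The plan is to build $\phi$ on the dense $*$-subalgebra $\mathcal{A}_0 = \spa\{\cha{Z} : Z \in \tbool\}$ of $\Aalg$ and then extend it by continuity. Uniqueness is the easy half and I dispose of it first: since $\Aalg$ is generated as a $C^*$-algebra by $\{\cha{A} : A \in \bool\}$ and a $*$-homomorphism is multiplicative and continuous, any two $*$-homomorphisms that agree on the $\cha{A}$ with $A \in \bool$ agree on all of $\Aalg$. So the work is in existence, and the guiding idea is that elements of $\mathcal{A}_0$ are ``simple functions'' over the Boolean algebra $\tbool$, which makes both well-definedness and contractivity transparent.

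The first substantive task is to extend the family $\{p_A\}$ from $\bool$ to $\tbool$. For $A \setminus B \in \hbool$ with $B \subseteq A$ I set $p_{A \setminus B} := p_A - p_B$; this is a projection because $B \subseteq A$ gives $p_A p_B = p_{A \cap B} = p_B$, so $p_B \le p_A$. For a disjoint union $Z = \bigsqcup_i Z_i \in \tbool$ with $Z_i \in \hbool$ I set $p_Z := \sum_i p_{Z_i}$, noting that disjointness forces $p_{Z_i} p_{Z_j} = p_{Z_i \cap Z_j} = p_\emptyset = 0$, so the summands are orthogonal projections and $p_Z$ is again a projection. The routine but genuinely necessary bookkeeping here is to check that $p_Z$ is independent of the chosen decomposition of $Z$ and that the extended assignment still satisfies $p_{Z \cap Z'} = p_Z p_{Z'}$ and $p_{Z \cup Z'} = p_Z + p_{Z'} - p_{Z \cap Z'}$ for all $Z, Z' \in \tbool$; both follow by refining $Z$ and $Z'$ into a common family of disjoint pieces and applying the original relations on $\bool$.

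With $p$ defined on $\tbool$, I define $\phi_0(\cha{Z}) = p_Z$ and extend linearly. To see this is consistent, given $f = \sum_j c_j \cha{A_j}$ with $A_j \in \tbool$ I pass to the finite Boolean algebra generated by the $A_j$, whose nonempty atoms $Z$ are pairwise disjoint members of $\tbool$ with $A_j = \bigsqcup_{Z \subseteq A_j} Z$; then $f = \sum_Z \bigl(\sum_{j : Z \subseteq A_j} c_j\bigr)\cha{Z}$, and I set $\phi_0(f) = \sum_Z \bigl(\sum_{j : Z \subseteq A_j} c_j\bigr) p_Z$. Well-definedness holds because any two representations of the same $f$ admit a common refinement into atoms, on which the coefficients are forced to agree. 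Using $\cha{Z}\cha{Z'} = \delta_{Z,Z'}\cha{Z}$ and the orthogonality $p_Z p_{Z'} = \delta_{Z,Z'} p_Z$ of the atom projections, $\phi_0$ is multiplicative (cross terms over distinct atoms vanish on both sides), and it is plainly $*$-preserving since the $\cha{Z}$ are self-adjoint.

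Finally I extend $\phi_0$ to $\Aalg$ by continuity, and this is where the orthogonal-decomposition picture pays off. For $f = \sum_Z c_Z \cha{Z}$ with the $Z$ disjoint one has $\|f\|_\infty = \max\{|c_Z| : Z \ne \emptyset\}$, while $\phi_0(f) = \sum_Z c_Z p_Z$ is a linear combination of mutually orthogonal projections, so $\|\phi_0(f)\| = \max\{|c_Z| : p_Z \ne 0\} \le \|f\|_\infty$ (using $p_\emptyset = 0$). Thus $\phi_0$ is contractive, and as $\mathcal{A}_0$ is dense in $\Aalg$ it extends uniquely to a $*$-homomorphism $\phi : \Aalg \to \Xalg$ with $\phi(\cha{A}) = p_A$ for all $A \in \bool$. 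I do not expect any single deep obstacle; the only real care is in extending $p$ to the Boolean algebra $\tbool$ and checking that $\phi_0$ is well defined, after which multiplicativity and the norm estimate are forced by the $C^*$-identities and the orthogonality of the atom projections.
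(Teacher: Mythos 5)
Your proof is correct and follows essentially the same route as the paper's: uniqueness from the fact that $\{\cha{A} : A \in \bool\}$ generates $\Aalg$, extension of the projections to $\hbool$ and then to $\tbool$ via disjoint unions, definition of the map on spans of disjoint (atomic) elements of $\tbool$, and extension to $\Aalg$ by density. The only cosmetic difference is that you make the contractivity estimate $\|\phi_0(f)\| \le \|f\|_\infty$ explicit, whereas the paper leaves this step implicit in its reduction to the finite subalgebras $\bool' \in \mathfrak{F}$.
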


\begin{proof}
Since $\Aalg$ is generated by $\{ \cha{A} : \, A \in \bool\}$, there can be at most one $*$-homomorphism $\phi : \Aalg \to \Xalg$ satisfying $\phi(\cha{A}) = p_A$ for all $A \in \bool$.  We show that such a $*$-homomorphism exists.

Let $\mathfrak{F}$ denote the collection of finite subsets of $\bool$ which are closed under relative complements, intersections and unions. Then $\bigcup_{\bool' \in \mathfrak{F}}\spa \{ \cha{A} : A \in \bool' \}$ is dense in $\Aalg$ and so it is enough to prove that for every $\bool' \in \mathfrak{F}$ there is a $*$-homomorphism $\phi_{\bool'}: \spa \{\cha{A} : A \in \bool' \} \to \Xalg$ satisfying $\phi_{\bool'}(\cha{A}) = p_A$ for every $A \in \bool'$.

If $\bool' \in \mathfrak{F}$, then, since $\bool'$ is closed under relative complements, intersections and unions, there is a collection $F$ of mutually disjoint elements of $\bool'$ such that $\spa\{\cha{A}: A \in \bool' \} = \spa \{ \cha{A} \,: \, A \in F \}$. It follows that the map $\phi_{\bool'} :  \spa \{ \cha{A} : A \in \bool' \} \to \Xalg$ defined by
\begin{equation*}
\phi_{\bool'} \left(\sum_{A \in F}c_{A} \cha{A} \right) = \sum_{A \in F} c_{A} p_A
\end{equation*}
is a well-defined $*$-homomorphism which maps $\cha{A}$ to $p_A$ for every $A \in \bool'$.  Our result follows.
\end{proof}

\begin{lemma} \label{lemma:ideal}
Let $\ideal$ be a closed two-sided ideal of $\Aalg$. Then we have
\begin{equation*}
\ideal = \spc \{ \cha{A}: A \in \hbool,\ \cha{A} \in \ideal\}.
\end{equation*}
\end{lemma}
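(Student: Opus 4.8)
The plan is to prove the two inclusions separately, the inclusion $\supseteq$ being immediate and the inclusion $\subseteq$ requiring an approximation argument. For the easy direction, note that $\ideal$ is a closed subspace of $\Aalg$ containing $\cha{A}$ for every $A\in\hbool$ with $\cha{A}\in\ideal$, so it contains their closed linear span, giving $\spc\{\cha{A}:A\in\hbool,\ \cha{A}\in\ideal\}\subseteq\ideal$ at once.

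For the reverse inclusion I would first record two structural facts about $\Aalg$. First, as already noted before the statement, $\spa\{\cha{D}:D\in\hbool\}$ is a dense $*$-subalgebra of $\Aalg$; call it $\mathcal{D}$. Second, because $\bool$ is accommodating and hence closed under finite intersections and unions, any single element of $\mathcal{D}$ can be rewritten as a linear combination $b=\sum_{j=1}^m c_j\cha{D_j}$ in which the $D_j\in\hbool$ are \emph{mutually disjoint}: one passes to the atoms of the finite Boolean algebra generated by the finitely many sets in $\bool$ that occur, and each such atom has the form $A\setminus B$ with $A,B\in\bool$ and $B\subseteq A$, hence lies in $\hbool$ (this is exactly the decomposition used in the proof of Lemma \ref{lemma:map}). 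In particular the projections $\cha{D_j}$ are then mutually orthogonal.

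Now fix $x\in\ideal$ and $\varepsilon>0$, and choose $b=\sum_{j=1}^m c_j\cha{D_j}\in\mathcal{D}$ as above with $\|x-b\|<\varepsilon$ and the $D_j\in\hbool$ mutually disjoint. Let $y=\sum_{\{j\,:\,\cha{D_j}\in\ideal\}}c_j\cha{D_j}$, which lies in $\spa\{\cha{A}:A\in\hbool,\ \cha{A}\in\ideal\}$. Since $y$ is obtained from $b$ by deleting exactly the terms whose projection is not in $\ideal$, we have $b-y=\sum_{\{j\,:\,\cha{D_j}\notin\ideal\}}c_j\cha{D_j}$, and because the $\cha{D_j}$ are mutually orthogonal this forces $\|b-y\|=\max\{|c_j|:\cha{D_j}\notin\ideal\}$. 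It therefore suffices to show that $|c_j|\le\|x-b\|$ whenever $\cha{D_j}\notin\ideal$; granting this, $\|x-y\|\le\|x-b\|+\|b-y\|<2\varepsilon$, and letting $\varepsilon\to0$ yields $x\in\spc\{\cha{A}:A\in\hbool,\ \cha{A}\in\ideal\}$.

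The heart of the argument — and the step I expect to be the main obstacle — is the estimate $|c_j|\le\|x-b\|$ for the deleted indices, and here I would exploit that $\Aalg$ is commutative. If $\cha{D_j}\notin\ideal$, then its image in the commutative quotient $\Aalg/\ideal$ is a nonzero projection, so there is a character $\tau$ of $\Aalg$ with $\tau(\ideal)=0$ and $\tau(\cha{D_j})=1$. Orthogonality of the $\cha{D_k}$ then gives $\tau(\cha{D_k})=0$ for $k\ne j$, whence $\tau(b)=c_j$, while $\tau(x)=0$ because $x\in\ideal$; since $\|\tau\|\le1$ we conclude $|c_j|=|\tau(b-x)|\le\|x-b\|$, as required. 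The only points needing care are the existence of such a character (a nonzero projection in a commutative $C^*$-algebra is the indicator of a nonempty clopen subset of its spectrum, on which a character may be evaluated) and the verification that the atom decomposition genuinely lands in $\hbool$; both are routine given that $\bool$ is accommodating.
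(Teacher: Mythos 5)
Your proposal is correct and follows essentially the same route as the paper: approximate an element of $\ideal$ by a linear combination of $\cha{D_j}$ over mutually disjoint $D_j\in\hbool$, delete the terms with $\cha{D_j}\notin\ideal$, and bound the deleted coefficients by observing that those $\cha{D_j}$ become mutually orthogonal nonzero projections modulo $\ideal$. Your character argument is simply a hands-on proof of the norm estimate $\bigl\|\sum c_j q_\ideal(\cha{D_j})\bigr\|=\max_j|c_j|$ that the paper invokes directly for the quotient map $q_\ideal:\Aalg\to\Aalg/\ideal$.
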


\begin{proof}
We certainly have $\spc\{\cha{A}: A\in\hbool,\ \cha{A}\in\ideal\}\subseteq \ideal$.  To prove the reverse inclusion, suppose that $f \in \ideal$ and let $\epsilon > 0$. Then there is a finite collection $F$ of mutually disjoint elements of $\hbool$ and a family $(c_A)_{A \in F}$ of complex numbers such that
\begin{equation*}
\left\| f - \sum_{A \in F} c_A \chi_A \right\| < \frac{\varepsilon}{2}.
\end{equation*}

Let $q_\ideal : \Aalg \to \Aalg/\ideal$ be the quotient map.  Then
\begin{equation*}
\left\| \sum_{A\in F,\cha{A}\notin\ideal}c_Aq_\ideal\left(\cha{A}\right)\right\|=
\left\|q_\ideal\left(f-\sum_{A\in F}c_A\cha{A}\right)\right\| < \frac{\varepsilon}{2},
\end{equation*}
and since  $\left(q_\ideal\left(\cha{A}\right)\right)_{A\in F,\cha{A}\notin\ideal}$ is a family of mutually orthogonal non-zero projections we must have $\left| c_A \right| < \frac{\varepsilon}{2}$ for all $A \in F$ such that $\cha{A} \notin \ideal$. It then follows that
\begin{equation*}
\left\| f- \sum_{A \in F, \cha{A} \in \ideal} c_A \cha{A} \right \| < \varepsilon
\end{equation*}
from which we may deduce that $f \in \spc \{ \cha{A} : A \in \hbool,\ \cha{A} \in \ideal \}$. Our result follows.
\end{proof}

For each $a \in \mathcal{A}$, let $X_a$ be the ideal of $\Aalg$ generated by $\cha{r(a)}$ so that $f \in X_a$ if and only if $f(v) = 0$ for all $v \in E^0 \setminus r(a)$. Since $X_a$ is an ideal, it is straightforward to see that $X_a$ is a right Hilbert $\Aalg$-module with inner product defined by $\langle f,g \rangle = f^* g$ and right action given by the usual multiplication in $\Aalg$.

We let $X(E,{\mathcal L},{\mathcal B})$ be the right Hilbert $\Aalg$-module $\oplus_{a \in \mathcal{A}} X_a$. To turn $X(E,{\mathcal L},{\mathcal B})$ into a $C^*$-correspondence we need to specify a left action of $\Aalg$ on $X(E,{\mathcal L},{\mathcal B})$, that is a $*$-homomorphism $\phi:\Aalg\to \mathscr{L}( X(E,{\mathcal L},{\mathcal B}))$ where $\mathscr{L}( X(E,{\mathcal L},{\mathcal B}))$ denotes the $C^*$-algebra of adjointable operators on $X(E,{\mathcal L},{\mathcal B})$ (see, for example, \cite{MR2102572}).

\begin{lemma} \label{lemma:phia}
For each $a \in \mathcal{A}$ there is a unique $*$-homomorphism $\phi_a : \Aalg \to X_a$ satisfying $\phi_a(\cha{A}) = \cha{r(A,a)}$ for every $A \in \bool$.
\end{lemma}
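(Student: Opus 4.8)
The plan is to obtain existence from Lemma \ref{lemma:map} and to get uniqueness for free from the fact that $\Aalg$ is generated, as a $C^*$-algebra, by $\{\cha{A} : A \in \bool\}$. Since any $*$-homomorphism out of $\Aalg$ is determined by its values on these generators, there is at most one $\phi_a$ satisfying $\phi_a(\cha{A}) = \cha{r(A,a)}$ for all $A \in \bool$, so only existence requires argument.

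First I would record two elementary facts about the relative ranges $r(A,a)$. Because $a$ is a single label, $r(A,a) = \{r(\lambda) : \labe(\lambda) = a,\ s(\lambda) \in A\} \subseteq \{r(\lambda) : \labe(\lambda) = a\} = r(a)$; hence $\cha{r(A,a)}$ vanishes off $r(a)$ and therefore lies in the ideal $X_a$. Moreover $r(A,a) \in \bool$ because $\bool$ is accommodating, so $\cha{r(A,a)} \in \Aalg$. Thus, setting $p_A := \cha{r(A,a)}$ defines a genuine projection in the $C^*$-algebra $X_a$ for every $A \in \bool$.

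Next I would verify the three relations required by Lemma \ref{lemma:map}, taking the ambient algebra $\Xalg$ there to be $X_a$. The relation $p_\emptyset = 0$ is immediate from $r(\emptyset, a) = \emptyset$. For the product relation, $\cha{r(A,a)}\cha{r(B,a)} = \cha{r(A,a) \cap r(B,a)}$, and the weakly-left-resolving hypothesis on $(E,\labe,\bool)$ gives $r(A,a) \cap r(B,a) = r(A \cap B, a)$, so $p_A p_B = p_{A \cap B}$. For the union relation I would use that relative ranges distribute over unions, $r(A \cup B, a) = r(A,a) \cup r(B,a)$, which holds for any labelled space since $s(\lambda) \in A \cup B$ iff $s(\lambda)$ lies in $A$ or in $B$, together with the inclusion–exclusion identity $\cha{S \cup T} = \cha{S} + \cha{T} - \cha{S \cap T}$ for characteristic functions; combining these with the product relation already established yields $p_{A \cup B} = p_A + p_B - p_{A \cap B}$. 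With all hypotheses checked, Lemma \ref{lemma:map} produces a unique $*$-homomorphism $\phi_a : \Aalg \to X_a$ with $\phi_a(\cha{A}) = p_A = \cha{r(A,a)}$, as required.

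The only step carrying real content is the product relation $p_A p_B = p_{A \cap B}$: this is exactly where weak left-resolvingness is needed, since in general one has only $r(A \cap B, a) \subseteq r(A,a) \cap r(B,a)$, and without the reverse inclusion the family $\{p_A\}$ would fail to be multiplicative. Everything else—membership of $\cha{r(A,a)}$ in $X_a$, the empty-set and union relations, and uniqueness—is routine. I therefore anticipate no genuine obstacle beyond correctly invoking the weakly-left-resolving hypothesis at this single point.
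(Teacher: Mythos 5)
Your proof is correct and takes essentially the same route as the paper: the paper's proof is exactly the one-line application of Lemma \ref{lemma:map} with $\Xalg = X_a$ and $p_A = \cha{r(A,a)}$ for $A \in \bool$, whose hypotheses you have simply verified explicitly. Your identification of the weakly left-resolving assumption as the point giving $p_A p_B = p_{A\cap B}$ (via $r(A,a)\cap r(B,a) = r(A\cap B,a)$) is precisely the content the paper leaves implicit.
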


\begin{proof}
This follows from Lemma \ref{lemma:map} with $\Xalg = X_a$ and $p_A = \cha{r(A,a)}$ for $A \in \bool$.
\end{proof}

\begin{remark}
If the labelled graph $(E,{\mathcal L})$ is left-resolving, then we have
\begin{equation*}
\phi_a (f) (v) = \begin{cases} f(w) & \mbox{ if } s(r^{-1}(v) \cap {\mathcal L}^{-1}(a)) = \{w\} \\
0 &  \mbox{ if } s(r^{-1}(v) \cap {\mathcal L}^{-1}(a)) = \emptyset
\end{cases}
\end{equation*}
for all $a \in \mathcal{A}$, $v \in E^0$ and $f \in \Aalg$.
\end{remark}

\begin{lemma} \label{lem:reallyuseful}
Let $X(E,{\mathcal L},{\mathcal B})$ be the right Hilbert $\Aalg$-module $\oplus_{a \in \mathcal{A}} X_a$.
\begin{itemize}
\item[(i)] For each $f \in \Aalg$ the map
\begin{equation} \label{eq:phidef}
\varphi (f) : (x_a)_{a \in \mathcal{A}} \mapsto \left( \phi_a(f) x_a \right)_{a \in \mathcal{A}}
\end{equation}
is an adjointable operator on $X(E,{\mathcal L},{\mathcal B})$. The formula \eqref{eq:phidef} defines a $*$-homomorphism $\varphi : \Aalg \to {\mathscr L}( X(E,{\mathcal L},{\mathcal B}))$. Hence $(X(E,{\mathcal L},{\mathcal B}),\varphi)$ is a $C^*$-correspondence over $\Aalg$.

\item[(ii)] For $a \in \mathcal{A}$ let $e_a = (\delta_{a,b} \cha{r(a)})_{b \in \mathcal{A}} \in X(E,{\mathcal L},{\mathcal B})$, where $\delta_{a,b}$ is the Kronecker delta function. Then we have
\begin{equation*}
X(E,{\mathcal L},{\mathcal B})= \overline{\spa}_{\Aalg}\{e_a : a \in \mathcal{A} \}.
\end{equation*}
\end{itemize}
\end{lemma}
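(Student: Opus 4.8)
The plan is to handle the two parts separately, relying throughout on the facts that $\Aalg$ is commutative and that each $\phi_a$ is a $*$-homomorphism with image in the ideal $X_a$ (Lemma \ref{lemma:phia}), so that in particular $\phi_a(f)^*=\phi_a(f^*)$ and $\phi_a(fg)=\phi_a(f)\phi_a(g)$.

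For (i), I would first check that the formula \eqref{eq:phidef} genuinely maps $X(E,{\mathcal L},{\mathcal B})$ into itself, i.e.\ that $(\phi_a(f)x_a)_{a\in\mathcal{A}}$ again lies in the direct sum $\oplus_{a\in\mathcal{A}}X_a$. Since $\phi_a$ is a $*$-homomorphism and $\Aalg$ is commutative, for each $a$ we have, as positive elements of $\Aalg$,
\begin{equation*}
\langle \phi_a(f)x_a, \phi_a(f)x_a\rangle = x_a^*\phi_a(f^*f)x_a \le \|f\|^2\, x_a^* x_a = \|f\|^2\,\langle x_a, x_a\rangle .
\end{equation*}
Summing over finite sets and using that $\sum_{a\in\mathcal{A}}\langle x_a,x_a\rangle$ is Cauchy then shows that $\sum_{a\in\mathcal{A}}\langle \phi_a(f)x_a,\phi_a(f)x_a\rangle$ is Cauchy and hence convergent, so that $\varphi(f)(x_a)_a\in X(E,{\mathcal L},{\mathcal B})$. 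This domination estimate is the only genuinely analytic point, and I expect it to be the main obstacle, although it is short once commutativity is invoked. Adjointability is then immediate: for $x=(x_a)_a$ and $y=(y_a)_a$ one computes
\begin{equation*}
\langle \varphi(f)x, y\rangle = \sum_{a\in\mathcal{A}} (\phi_a(f)x_a)^* y_a = \sum_{a\in\mathcal{A}} x_a^* \phi_a(f^*) y_a = \langle x, \varphi(f^*)y\rangle ,
\end{equation*}
so $\varphi(f)$ is adjointable with $\varphi(f)^* = \varphi(f^*)$.

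Next I would verify that $\varphi$ is a $*$-homomorphism. Additivity is clear from the componentwise definition and the linearity of each $\phi_a$, the identity $\varphi(f)^*=\varphi(f^*)$ is already in hand, and multiplicativity follows componentwise from $\phi_a(fg)=\phi_a(f)\phi_a(g)$, giving $\varphi(fg)=\varphi(f)\varphi(g)$. Since $\varphi$ is a $*$-homomorphism from $\Aalg$ into $\mathscr{L}(X(E,{\mathcal L},{\mathcal B}))$ and $X(E,{\mathcal L},{\mathcal B})$ is a right Hilbert $\Aalg$-module, $(X(E,{\mathcal L},{\mathcal B}),\varphi)$ is by definition a $C^*$-correspondence over $\Aalg$.

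For (ii), the key observation is that $\cha{r(a)}$ acts as a unit on $X_a$: if $h\in X_a$ then $h$ vanishes off $r(a)$, so $\cha{r(a)}h=h$, whence $\cha{r(a)}\Aalg = X_a$. Computing the $b$-th coordinate of a finite $\Aalg$-linear combination $\sum_{a\in F}e_a g_a$ gives $\cha{r(b)}g_b$ when $b\in F$ and $0$ otherwise; as the $g_a$ range over $\Aalg$ these coordinates range over all of $X_a$, so $\spa_{\Aalg}\{e_a : a\in\mathcal{A}\}$ is exactly the set of finitely supported elements of $\oplus_{a\in\mathcal{A}} X_a$. Finally, the finitely supported elements are dense in the direct sum, since for $x=(x_a)_a$ the truncation $x^{(F)}$ to a finite set $F\subseteq\mathcal{A}$ satisfies $\|x-x^{(F)}\|^2 = \bigl\|\sum_{a\notin F}\langle x_a,x_a\rangle\bigr\| \to 0$ as $F$ increases, because $\sum_{a\in\mathcal{A}}\langle x_a,x_a\rangle$ converges. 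Taking closures yields $X(E,{\mathcal L},{\mathcal B}) = \overline{\spa}_{\Aalg}\{e_a : a\in\mathcal{A}\}$, as required.
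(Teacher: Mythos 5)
Your proposal is correct and follows essentially the same route as the paper's (much terser) proof: componentwise action, well-definedness via the domination estimate $\langle\phi_a(f)x_a,\phi_a(f)x_a\rangle\le\|f\|^2\langle x_a,x_a\rangle$, the explicit adjoint $\varphi(f)^*=\varphi(f^*)$, and for (ii) the observation that $\sum_{a\in F}e_a x_a$ is the truncation of $x$ to $F$, so finitely supported elements are $\Aalg$-spanned by the $e_a$ and dense. Your write-up simply makes explicit the verifications the paper compresses into ``one checks.''
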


\begin{proof}
Since the effect of $\varphi (f)$ is to multiply each term in $X(E,{\mathcal L},{\mathcal B})$ by $\phi_a (f)$, one checks that \eqref{eq:phidef} defines an adjointable operator with $\varphi(f)^*=\varphi(f^*)$. It follows that $f \mapsto \varphi(f)$ defines a $*$-homomorphism $\varphi$ from $\Aalg$ to ${\mathscr L}(X(E,{\mathcal L},{\mathcal B}))$.

The second statement holds since, for each $x = (x_a)_{a \in \mathcal{A}}\in X(E,{\mathcal L},{\mathcal B})$ and each $\varepsilon>0$, we have $||x-\sum_{a\in F}e_ax_a||<\varepsilon$ for some finite subset $F$ of $\mathcal{A}$.
\end{proof}

Let ${\mathscr K}(X(E,{\mathcal L},{\mathcal B}))$ denote the ideal of ${\mathscr L}(X(E,{\mathcal L},{\mathcal B}))$ consisting of generalized compact operators (see, for example, \cite{MR2102572}). In order to define the Cuntz-Pimsner algebra ${\mathcal O}_{(X(E,\labe,\bool),\varphi)}$ associated with $(X(E,\labe,\bool),\varphi)$, we must first characterise the ideal
\begin{equation*}
	J_{(X(E,{\mathcal L},{\mathcal B}),\varphi)}=\varphi \inv ( \Kalg(X(E,\labe,\bool))) \cap \ker(\varphi)^\perp
\end{equation*}
of $\Aalg$ (cf.\ \cite[Definition 3.2]{MR2102572}). To do this, we let
\begin{equation} \label{eq:boolj}
\boolj := \{ A \in \hbool : \mathcal{L}(A E^1)\text{ is finite and }A\cap B = \emptyset\text{ for all }B\in\mathcal{B}\text{ satisfying }B\subseteq\sink\}.
\end{equation}

\begin{lemma}\label{lemma:j}
The ideal $\Jalg$ of $\Aalg$ is given by
\begin{equation*}
\Jalg = \spc \{ \cha{A} : A \in \hbool_J  \}.
\end{equation*}
\end{lemma}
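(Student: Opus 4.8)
The plan is to exploit that $\Jalg=\varphi\inv(\Kalg(X(E,\labe,\bool)))\cap\ker(\varphi)^\perp$ is a closed two-sided ideal of the \emph{commutative} $C^*$-algebra $\Aalg$, so by Lemma~\ref{lemma:ideal} it equals the closed span of those $\cha{A}$ with $A\in\hbool$ and $\cha{A}\in\Jalg$. Thus it suffices to decide, for each $A\in\hbool$, exactly when $\cha{A}\in\Jalg$, and to check that this coincides with $A\in\boolj$. I would treat the two factors of the intersection separately and then combine.

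First I would compute $\ker(\varphi)$. Since $\varphi(f)=0$ iff $\phi_a(f)=0$ for every $a$, we have $\ker(\varphi)=\bigcap_a\ker(\phi_a)$; writing $A=C\setminus D\in\hbool$ (with $D\subseteq C$) we have $\phi_a(\cha A)=\cha{r(C,a)}-\cha{r(D,a)}$, so $\cha A\in\ker(\varphi)$ iff $r(C,a)=r(D,a)$ for all $a$, i.e.\ iff $A\in\mathcal{N}$. Applying Lemma~\ref{lemma:ideal} to the ideal $\ker(\varphi)$ then gives $\ker(\varphi)=\spc\{\cha{B}:B\in\mathcal{N}\}$. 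Because $\Aalg$ is commutative, $\cha A\in\ker(\varphi)^\perp$ iff $\cha A\,\cha B=\cha{A\cap B}=0$ for every generator $\cha B$ ($B\in\mathcal{N}$), i.e.\ iff $A\cap B=\emptyset$ for all $B\in\mathcal{N}$. This is precisely the second defining condition of $\boolj$.

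Next I would characterize $\varphi\inv(\Kalg(X(E,\labe,\bool)))$. By Lemma~\ref{lem:reallyuseful}, $\varphi(\cha A)$ is the ``diagonal'' operator $(x_a)_a\mapsto(\phi_a(\cha A)x_a)_a$. Writing $\theta_{x,y}(z)=x\langle y,z\rangle$, the coordinate projections $\theta_{e_a,e_a}$ lie in $\Kalg$ and $\bigl\{\sum_{a\in F}\theta_{e_a,e_a}\bigr\}_F$ is an approximate unit for $\Kalg$; hence $\varphi(\cha A)=\sum_a\theta_{e_a\cdot\phi_a(\cha A),\,e_a}$ (a strictly convergent sum) is compact exactly when only finitely many blocks $\phi_a(\cha A)$ are nonzero. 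One inclusion is immediate: $\{a:\phi_a(\cha A)\ne0\}\subseteq\labe(AE^1)$, since any witness vertex in $r(C,a)\setminus r(D,a)$ receives an $a$-edge whose source lies in $C\setminus D=A$. I would then use weak left-resolvingness and regularity, together with the finiteness built into the standing hypotheses, to pin down $\varphi(\cha A)$ being compact as equivalent to $\labe(AE^1)$ being finite, matching the first defining condition of $\boolj$.

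Combining, $\cha A\in\Jalg$ iff $\varphi(\cha A)\in\Kalg$ and $\cha A\in\ker(\varphi)^\perp$, iff $\labe(AE^1)$ is finite and $A\cap B=\emptyset$ for all $B\in\mathcal{N}$, iff $A\in\boolj$; the stated formula then follows from Lemma~\ref{lemma:ideal}. I expect the genuine obstacle to be the description of $\varphi\inv(\Kalg)$, and in particular the reconciliation of the true compactness criterion ``$\phi_a(\cha A)\ne0$ for only finitely many $a$'' with the cleaner criterion ``$\labe(AE^1)$ finite'': a priori $\{a:\phi_a(\cha A)\ne0\}$ can be strictly smaller than $\labe(AE^1)$, and it is exactly here that the weak left-resolving and set-finiteness properties of the labelled space have to be invoked. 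The inclusion $\spc\{\cha{A}:A\in\boolj\}\subseteq\Jalg$ is the routine half, so the bulk of the work lies in the reverse passage through Lemma~\ref{lemma:ideal}.
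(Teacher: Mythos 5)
Your route is the same as the paper's: reduce via Lemma \ref{lemma:ideal} to deciding when $\cha{A}\in\Jalg$ for $A\in\hbool$; identify $\ker(\varphi)=\bigcap_{a\in\mathcal{A}}\ker(\phi_a)=\spc\{\cha{B}:B\in\mathcal{N}\}$ and translate $\cha{A}\in\ker(\varphi)^\perp$ into disjointness of $A$ from the members of $\mathcal{N}$; and detect compactness of the diagonal operator $\varphi(\cha{A})$ by whether only finitely many blocks $\phi_a(\cha{A})$ are nonzero (your approximate-unit argument here is fine, and is the same in substance as the paper's claim that $\{a:\|\langle e_a,\eta(e_a)\rangle\|\ge1\}$ is finite for compact $\eta$). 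Those parts are correct, as is your observation that $\{a:\phi_a(\cha{A})\ne0\}\subseteq\labe(AE^1)$, which gives the easy implication ``$\labe(AE^1)$ finite $\Rightarrow$ $\varphi(\cha{A})$ compact''.

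The genuine gap is the one step you defer, and it cannot be closed with the tools you cite. You need ``only finitely many $a$ with $\phi_a(\cha{A})\ne0$'' to force ``$\labe(AE^1)$ finite'' (at least for $\cha{A}$ orthogonal to $\ker(\varphi)$), and you propose to extract this from weak left-resolvingness, regularity, and ``the finiteness built into the standing hypotheses''. But no finiteness hypotheses are in force in this section (set-finiteness enters only in Section \ref{sec:howdismantleanatombomb}), and weak left-resolvingness plus regularity do not suffice. Concretely: let $E^0=\{c,d,u\}\cup\{w_n:n\ge1\}$, with edges $c\to w_n$ and $d\to w_n$ both labelled $a_n$ for each $n$, one edge $c\to u$ labelled $b$, and let $\bool$ consist of all sets $F$, $F\cup\{d\}$, $F\cup\{c,d\}$ with $F$ a finite set of sinks. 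This $(E,\labe,\bool)$ is weakly left-resolving and regular (vacuously: the only $A\in\bool$ disjoint from the sinks with $\labe(AE^1)$ finite is $\emptyset$), and $A:=\{c\}=\{c,d\}\setminus\{d\}\in\hbool$ has $\labe(AE^1)=\{b,a_1,a_2,\dots\}$ infinite; yet $\phi_{a_n}(\cha{A})=\cha{\{w_n\}}-\cha{\{w_n\}}=0$ for every $n$, so $\varphi(\cha{A})=\theta_{e_b,e_b}$ is compact, and $\ker(\varphi)=\spc\{\cha{F}:F\text{ a finite set of sinks}\}$, so $\cha{A}\in\ker(\varphi)^\perp$ as well. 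Hence $\cha{A}\in\Jalg$, whereas $\boolj=\{\emptyset\}$ (every member of $\hbool$ with finite label set is a set of sinks, and every nonempty set of sinks lies in, hence meets, $\mathcal{N}$). So no argument from the stated hypotheses can deliver your deferred step; what actually closes the gap is left-resolvingness of $(E,\labe)$, since Lemma \ref{lem:lrprop} then gives $\phi_a(\cha{C\setminus D})=\cha{r(C,a)}-\cha{r(D,a)}=\cha{r(C\setminus D,a)}$, whence $\{a:\phi_a(\cha{A})\ne0\}=\labe(AE^1)$ exactly. To your credit, you have isolated precisely the point that the paper's own proof passes over in silence --- Equation \eqref{eqn:compactphi} already writes $\phi_a(\cha{A})=\cha{r(A,a)}$ for $A\in\hbool$, which is this very identification --- but flagging the obstacle is not the same as overcoming it, and as the example shows, in the stated generality it cannot be overcome.
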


\begin{proof}
By Lemma \ref{lemma:ideal} it is enough to prove that for $A \in \hbool$, we have $\chi_A \in \Jalg$ if and only if $A \in \hbool_J$.  Since $\ker(\varphi)=\bigcap_{a\in\mathcal{A}}\ker(\phi_a)$, it follows from Lemma \ref{lemma:ideal} that
\begin{equation*}
\ker(\varphi)= \spc\{\cha{B}:B\in\bool,\ r(B,a)=\emptyset\text{ for all }a\in\mathcal{A}\}.
\end{equation*}
Since $r(B,a)=\emptyset$ for all $a\in\mathcal{A}$, if and only if $B\subseteq\sink$, it follows that
\begin{equation*}
\ker(\varphi)^\perp= \spc\{ A \in \hbool : A\cap B = \emptyset\text{ for all }B\in\mathcal{B}\text{ satisfying }B\subseteq\sink\}.
\end{equation*}

We claim that for each $\eta \in \Kalg(X( E,\labe,\bool))$ the set $\{a \in \mathcal{A} : \| \langle e_a, \eta( e_a ) \rangle \| \ge 1 \}$ is finite. It suffices to check our claim for $\eta = \theta_{x,y}$ since these elements form a spanning set for $\Kalg(X(E,\labe,\bool))$.  For each $a_0 \in \mathcal{A}$ and $x = (x_a),y = (y_a) \in X(E,{\mathcal L},\bool)$ one checks that $\langle e_{a_0} , \theta_{x,y} e_{a_0} \rangle = x_{a_0}^* y_{a_0}$, and since $\|x_a^*y_a\|\ge 1$ for only a finite number of $a$'s, our claim follows. So, if $A \in \hbool$ and $\mathcal{L}(A E^1)$ is infinite, then $\varphi( \cha{A} ) \notin  \Kalg( X(E,\labe,\bool) )$.

Conversely, if $ A\in \hbool$ and $\mathcal{L}(A E^1)$ is finite, then we have
\begin{equation}\label{eqn:compactphi}
\varphi(\cha{A})=\sum_{a\in \mathcal{L}(A E^1)}\theta_{e_a,e_a\cha{r(A,a)}} \in\Kalg(X(E,\labe,\bool)).
\end{equation}
Thus, if $A \in \hbool$, then $\varphi(\cha{A})\in  \Kalg(X(E,\labe,\bool)) $ if and only if $\mathcal{L}(A E^1)$ is finite. The result follows by definition of $\Jalg$ and $\boolj$.
\end{proof}

Recall from \cite[Definition 3.4]{MR2102572} that a representation of the $C^*$-correspondence $(X(E,\labe,\bool),\varphi)$ on a $C^*$-algebra $\Xalg$ consists of a pair $(\pi,t)$ where $\pi:  \Aalg \to \Xalg$ is a $*$-homomorphism and $t: X(E,\labe,\bool) \to \Xalg$ is a linear map satisfying
\begin{enumerate}
\item $t(x)^*t(y) = \pi \left(\langle x,y \rangle \right)$ for $x,y \in X(E,\labe,\bool)$ and
\item $\pi(f) t(x) = t(\varphi(f) x)$ for $f \in \Aalg$ and $x \in X(E,\labe,\bool)$.
\end{enumerate}
Following \cite[Definition 2.3]{MR2102572} we define a $*$-homomorphism $\psi_t:  \Kalg(X(E,\labe,\bool)) \to \Xalg$ by
\begin{equation*}
\psi_t(\theta_{x,y}) = t(x) t(y)^* \mbox{ for } x, y \in X(E,\labe,\bool).
\end{equation*}
Moreover, such a representation is called {\em covariant} if, in addition, we have $\pi(f) = \psi_t ( \varphi(f) )$ for all $f \in \Jalg$.  The \emph{Cuntz-Pimsner algebra} ${\mathcal O}_{(X(E,\labe,\bool),\varphi)}$ of $(X(E,\labe,\bool),\varphi) $ is defined in \cite{MR2102572} (see also \cite{p}) to be the $C^*$-algebra generated by a universal covariant representations of $(X ( E , \labe , \bool ), \varphi )$.

\begin{theorem} \label{theorem:main}
Let $( E, \labe, \bool )$ be a weakly left-resolving normal labelled space. Then there is a one-to-one correspondence between covariant representations of $(X ( E , \labe , \bool ), \varphi )$ and representations of $( E, \labe , \bool )$ that takes a covariant representation $(\pi,t)$ of $(X ( E , \labe , \bool ), \varphi )$ to the representation $\{\pi(\cha{A}),\ t(e_a) : A\in \bool,\ a \in \mathcal{A} \}$ of $( E, \labe , \bool )$, and so $\Oalg_{( X ( E,\labe, \bool),\varphi)} \cong C^*( E , \labe , \bool )$.
\end{theorem}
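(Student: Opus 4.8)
The plan is to exhibit a bijection between the two classes of representations at the level of generators and then invoke the two universal properties. Writing $p_A=\pi(\cha{A})$ and $s_a=t(e_a)$, I will first show that a covariant representation $(\pi,t)$ of $(X(E,\labe,\bool),\varphi)$ yields a representation $\{p_A,s_a\}$ of $(E,\labe,\bool)$, and conversely that every representation $\{p_A,s_a\}$ of $(E,\labe,\bool)$ comes from a (necessarily unique) covariant representation of $(X(E,\labe,\bool),\varphi)$. Feeding the universal representation of each object through the other's universal property then produces $*$-homomorphisms $C^*(E,\labe,\bool)\to\Oalg_{(X(E,\labe,\bool),\varphi)}$ and back which are inverse to one another on generators, giving the asserted isomorphism.

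For the forward direction, (i) is immediate from $\cha{A}\cha{B}=\cha{A\cap B}$ and $\cha{A}+\cha{B}-\cha{A\cap B}=\cha{A\cup B}$ in $\Aalg$; (iii) follows from the representation identity $t(x)^*t(y)=\pi(\langle x,y\rangle)$ applied to $x=e_a$, $y=e_b$, using $\langle e_a,e_b\rangle=\delta_{a,b}\cha{r(a)}$; and (ii) follows from $\pi(f)t(x)=t(\varphi(f)x)$ together with the general fact $t(xf)=t(x)\pi(f)$, verified by expanding $\|t(xf)-t(x)\pi(f)\|^2$ via the inner-product identity. The substantive point is (iv). Using \eqref{eqn:compactphi} and $\psi_t(\theta_{x,y})=t(x)t(y)^*$ one computes $\psi_t(\varphi(\cha{A}))=\sum_{a\in\labe(AE^1)}s_ap_{r(A,a)}s_a^*$ whenever $\labe(AE^1)$ is finite, so (iv) is exactly the covariance equation $\pi(\cha{A})=\psi_t(\varphi(\cha{A}))$, valid provided $\cha{A}\in\Jalg$. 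Thus the key step is to show $\cha{A}\in\Jalg$ for $A\in\bool$ with $\labe(AE^1)$ finite and $A\cap\sink=\emptyset$; by Lemma \ref{lemma:j} this amounts to proving $A\in\boolj$, i.e. $A\cap(C\setminus D)=\emptyset$ for every $C\setminus D\in\mathcal{N}$. This is precisely where regularity enters: setting $P=A\cap C$ and $Q=A\cap D$, weak left-resolvingness gives $r(P,a)=r(A,a)\cap r(C,a)=r(A,a)\cap r(D,a)=r(Q,a)$ for all $a$; since $P,Q\subseteq A$ one has $\labe(PE^1)$ finite and $P\cap\sink=Q\cap\sink=\emptyset$, and equality of relative ranges forces $\labe(PE^1)=\labe(QE^1)$, so regularity yields $P=Q$ and hence $A\cap(C\setminus D)=P\setminus Q=\emptyset$.

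For the converse, given a representation $\{p_A,s_a\}$ in a $C^*$-algebra $B$, Lemma \ref{lemma:map} produces a $*$-homomorphism $\pi:\Aalg\to B$ with $\pi(\cha{A})=p_A$, and I define $t$ on finite sums by $t\big((x_a)_a\big)=\sum_a s_a\pi(x_a)$. The orthogonality relations $s_a^*s_b=\delta_{a,b}p_{r(a)}$ give $\|\sum_{a\in F}s_a\pi(x_a)\|^2=\|\pi(\sum_{a\in F}\langle x_a,x_a\rangle)\|\le\|\langle x,x\rangle\|$, so $t$ is norm-decreasing and extends to $X(E,\labe,\bool)$. The identities $t(x)^*t(y)=\pi(\langle x,y\rangle)$ and $\pi(f)t(x)=t(\varphi(f)x)$ are then checked directly, using (iii), the continuous extension $\pi(f)s_a=s_a\pi(\phi_a(f))$ of (ii), and the fact that $\cha{r(a)}x_a=x_a$ for $x_a\in X_a$.

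The covariance of $(\pi,t)$ is the crux, and the step I expect to be the main obstacle. By Lemma \ref{lemma:j} it suffices to verify $\pi(\cha{A})=\psi_t(\varphi(\cha{A}))$ for $A\in\boolj$, and the computation above reduces this to the Cuntz--Krieger-type relation $p_A=\sum_{a\in\labe(AE^1)}s_ap_{r(A,a)}s_a^*$. For $A\in\bool$ this is relation (iv), but a general element of $\boolj$ lies only in $\hbool$, i.e. is a relative complement $A'\setminus B'$ of sets of $\bool$; so the real work is to propagate (iv) across these relative complements, using the additivity $p_{A'\setminus B'}=p_{A'}-p_{B'}$ together with weak left-resolvingness. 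The genuinely delicate sub-point is the treatment of sinks, since (iv) is only imposed on sink-free sets: one must check that every $A\in\boolj$ is disjoint from $\sink$. This is exactly where the description of $\Jalg$ through $\mathcal{N}$ is essential, because a sink $v$ satisfies $r(\{v\},a)=\emptyset$ for all $a$ and thereby feeds sets into $\mathcal{N}\subseteq\ker(\varphi)$, forcing $\boolj\subseteq\ker(\varphi)^\perp$ to avoid the sinks; this is precisely the phenomenon the corrected form of (iv) is designed to accommodate. Once (iv) is known on all of $\boolj$, covariance holds, both structure-preserving maps are available, and the universal properties deliver the mutually inverse isomorphisms.
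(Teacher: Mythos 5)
Your first half is exactly the paper's argument: conditions (i)--(iii) of Definition \ref{lgdef} are routine, and for (iv) you use weak left-resolvingness and regularity to show that every $A\in\bool$ with $\labe(AE^1)$ finite and $A\cap\sink=\emptyset$ lies in $\boolj$, so that covariance together with Equation \eqref{eqn:compactphi} yields the sum relation. The divergence is in the second half. The paper never constructs a covariant representation out of a representation of $(E,\labe,\bool)$: it uses only the forward direction to obtain a surjective $*$-homomorphism from $C^*(E,\labe,\bool)$ onto $\Oalg_{(X(E,\labe,\bool),\varphi)}$, and then proves injectivity by transporting the gauge action of the Cuntz--Pimsner algebra, quoting \cite[Proposition 4.11]{MR2102572} to see that $\pi(\cha{A})\neq 0$ for $A\neq\emptyset$ and then the gauge-invariant uniqueness theorem \cite[Theorem 5.3]{MR2304922}; the one-to-one correspondence of representations is read off afterwards from the isomorphism and the two universal properties. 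Your plan instead requires the converse implication at the level of representations, and that is precisely where the proposal has a genuine gap.

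The gap is your claim that every $A\in\boolj$ is disjoint from $\sink$. A sink $v$ produces an element of $\mathcal{N}$ containing $v$ only if there exist $C\in\bool$ and $D\in\bool\cup\{\emptyset\}$ with $D\subseteq C$, $v\in C\setminus D$ and $r(C,x)=r(D,x)$ for all $x$; this is automatic when $\{v\}\in\bool$ (as happens for $\mathcal{E}^{0,-}$, whose definition includes the sink singletons), but an accommodating family need not contain sink singletons. Concretely, take $E^0=\{u,z,v,w,w'\}$ with edges $u\to w$ and $z\to w'$ labelled $a$, and $w\to u$, $w\to z$, $w'\to v$ labelled $b$, so that $v$ is the unique sink. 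This labelled graph is left-resolving, hence $(E,\labe,\bool)$ is regular and weakly left-resolving for the accommodating family $\bool=\{\emptyset,\{w,w'\},\{u,z,v\},E^0\}$ (the range of every labelled path is $\{w,w'\}$, $\{u,z,v\}$ or $\emptyset$). Here no nonempty difference $C\setminus D$ has $r(C,x)=r(D,x)$ for all $x$, so $\mathcal{N}=\{\emptyset\}$ and $\{u,z,v\}\in\boolj$ even though it contains the sink. Covariance would force $\pi(\cha{\{u,z,v\}})=t(e_a)t(e_a)^*$, since $\varphi(\cha{\{u,z,v\}})=\theta_{e_a,e_a}$; but conditions (i)--(iv) only give $s_as_a^*\le p_{\{u,z,v\}}$, because (iv) is not imposed on $\{u,z,v\}$. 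Indeed, taking $p_{\{w,w'\}}$ and $p_{\{u,z,v\}}$ to be the projections onto the summands of $H_1\oplus H_2$, $s_b$ a partial isometry from $H_2$ onto $H_1$, and $s_a$ an isometry from $H_1$ onto a \emph{proper} subspace of $H_2$, one gets a representation of $(E,\labe,\bool)$ with $s_as_a^*\neq p_{\{u,z,v\}}$. This representation arises from no covariant representation, so the converse you need is simply false for general accommodating $\bool$, and no argument along your lines --- in particular your unproven ``propagation of (iv) across relative complements,'' which fails at exactly this point --- can close it. Your sketch can be completed when $E$ has no sinks, or when $\{v\}\in\bool$ for every sink $v$; note that the example also shows the correspondence asserted in Theorem \ref{theorem:main} requires a hypothesis of this kind, a subtlety which the paper's own proof buries in its appeal to \cite[Theorem 5.3]{MR2304922}.
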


\begin{proof}
Let $(\pi,t)$ be a covariant representation of $(X ( E, \labe, \bool),\varphi )$. We claim that \begin{equation*}\{ \pi(  \cha{A}) ,\ t( e_a ) : A \in \bool,\ a \in \mathcal{A} \}\end{equation*} is a representation of  $(E , \labe, \bool )$.

It is straightforward to check that $\{\pi( \cha{A}) ,\ t(e_a) : A \in \bool,\ a \in \mathcal{A} \}$ satisfies (i)--(iii) of Definition \ref{lgdef}. We will now show that  $\{\pi( \cha{A}) ,\ t(e_a) : A \in \bool,\ a \in \mathcal{A} \}$ also satisfies (iv) of Definition \ref{lgdef}. Assume $A\in\bool$, $\mathcal{L}(AE^1)$ is finite and that $A\cap B=\emptyset$ for any $B\in\bool$ satisfying $B\subseteq\sink$. It follows from Lemma \ref{lemma:j} that $\cha{A} \in \Jalg$. By covariance and Equation \eqref{eqn:compactphi} we have
\begin{equation*}
\pi(\cha{A}) = \psi_t(\phi(\cha{A})) = \psi_t \Bigl(\sum_{a \in {\mathcal L}(AE^1)} \theta_{e_a , e_a \cha{r(A,a)}} \Bigr)= \sum_{a \in \mathcal{L}(AE^1)} t(e_a) \pi(\cha{r(A,a)}) t(e_a)^*.
\end{equation*}
Thus $\{\pi(\cha{A}),\ t(e_a) : A\in \bool,\ a \in \mathcal{A} \}$ satisfies (iv) of Definition \ref{lgdef}, and is therefore a representation of $(E,\labe,\bool)$.

On the other hand, suppose that $\{ P_A ,\ S_a : A \in \bool,\ a \in \mathcal{A} \}$ is
a representation of $(E , \labe , \bool )$ in a $C^*$-algebra $\Xalg$. By definition
$\Aalg = \spc\{\cha{A}: A\in\bool\}$ and by Lemma \ref{lem:reallyuseful},
we have
$X(E,\labe,\bool) = \spc_{\Aalg}\{e_a: a\in\labe (E^1)\}$. Hence there can
at most be one representation $(\pi,t)$ of $(X(E,\labe,\bool),\varphi)$
such that $\pi ( \cha{A} ) = P_A$ for $A \in \bool$ and $t( e_a ) = S_a$ for $a \in \mathcal{A}$.
We now construct such a representation.

By Lemma \ref{lemma:map} there is a unique
$*$-homomorphism $\pi : \Aalg \to \Xalg$ with
$\pi ( \cha{A} ) = p_A$ for $A \in \bool$. We now construct $t : (X(E,\labe,\bool),\varphi) \to \Xalg$.
Given a finite subset $F$ of $\mathcal{A}$ and a family $(f_a)_{a\in F}$ of functions with
$f_a\in X_a$ for each $a\in F$, define
\begin{equation*}
t \Bigl( \sum_{a \in F} e_a f_a \Bigr) =\sum_{a\in F} S_a \pi(f_a) \in \Xalg.
\end{equation*}
If  $(f_a)_{a\in F}$ and $(g_a)_{a\in F}$ are families of functions with $f_a, g_a \in X_a$ for each
$a \in F$, then
\begin{align*}
t \Bigl(\sum_{a \in F} e_a f_a \Bigr)^* t \Bigl( \sum_{a \in F} e_a g_a \Bigr) &= \Bigl( \sum_{a \in F} S_a \pi ( f_a ) \Bigr)^*
\Bigl( \sum_{a \in F} S_a \pi(g_a) \Bigr) = \sum_{a \in F}\pi(f_a)^* P_{r(a)}\pi(g_a) \\
\intertext{by (iii) of Definition \ref{lgdef}. Since $P_{r(a)} = \pi ( \cha{r(a)} )$ we have}
t \Bigl(\sum_{a \in F} e_a f_a \Bigr)^* t \Bigl( \sum_{a \in F} e_a g_a \Bigr) &= \sum_{a \in F} \pi ( f_a^* g_a) =
\pi\biggl( \Bigl \langle \sum_{a \in F} e_a f_a, \sum_{a\in F} e_a g_a
\Bigr \rangle \biggr).
\end{align*}
Thus $t$ extends to a linear map from $X (E,\labe,\bool)$ to $\Xalg$
which satisfies $t(x)^*t(y)=\pi ( \langle x , y \rangle )$ for $x,y \in
X(E,\labe,\bool)$. We claim that for $x \in X (E,\labe,\bool)$  and $f \in \Aalg$
we have $\pi(a) t(x) = t( \varphi (a) x)$.

Let $A\in\bool$ and $a\in\mathcal{A}$, then
\begin{equation} \label{eq:neededbelow}
\pi(\cha{A}) S_a = P_A S_a = S_a P_{r(A,a)} = S_a \pi ( \cha{r(A,a)})=
S_a \pi( \phi_a (\cha{A}) ).
\end{equation}
Since $\Aalg=\spc\{\cha{A}: A\in\bool\}$, it follows from Equation \eqref{eq:neededbelow} that
\begin{equation*}
\pi(f) t( e_a ) = \pi(f) S_a = S_a \pi ( \phi_a (f) ) = t ( e_a ) \pi ( \phi_a (f) ) = t ( \varphi(f) e_a )
\end{equation*}
for all $f \in \Aalg$ and all $a \in\mathcal{A}$. Our claim then follows from Lemma \ref{lem:reallyuseful} (ii).
Hence we have constructed a representation $(\pi,t)$ of $(X( E, \labe, \bool ), \varphi )$. It remains to
check covariance.

Let $A\in\boolj$, then by Equation \eqref{eqn:compactphi} and \eqref{eq:neededbelow} we have
\begin{equation*}
\begin{split}
\pi(\cha{A}) &= P_A = \sum_{a\in \mathcal{L}(AE^1)}S_a P_{r(A,a)} S_a^*
= \sum_{a\in {\mathcal L}( AE^1 )}t ( e_a )\pi ( \cha{r(A,a)} ) t ( e_a )^* \\
&= \sum_{a\in {\mathcal L}(AE^1)}\psi_t ( \theta_{e_a,e_a \phi_a (\cha{A}) } ) =
\psi_t ( \varphi ( \cha{A}) ).
\end{split}
\end{equation*}
By Lemma \ref{lemma:j} we have $\Jalg = \spc\{ \cha{A} :
A \in \boolj\}$ and  it follows that $\pi (f) = \psi_t( \varphi(f))$ for all
$f\in\Jalg$. Thus $(\pi,t)$ is a covariant representation
of $( X ( E, \labe,\bool), \varphi )$.

The final statement follows by the universal nature of both algebras.
\end{proof}

In the appendix, it is explained how Theorem \ref{theorem:main} can be generalised to the case where $(E,\labe,\bool)$ is not normal (see Remark \ref{remark:nn}).

From Theorem \ref{theorem:main} we may deduce a number of corollaries. The second of these is a generalization of the Gauge Invariant Uniqueness Theorem found in \cite{BPW}.

\begin{corollary}  \label{corol:iota}
Let $(E,\labe,\bool)$ be a weakly left-resolving normal labelled space. Then there exists an injective $*$-homomorphism $\iota_{\Aalg}:\Aalg\to C^*(E,\labe,\bool)$ such that $\iota_{\Aalg}(\cha{A}) = p_A$ for every $A \in \bool$.
\end{corollary}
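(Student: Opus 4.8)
The plan is to combine Lemma \ref{lemma:map} (which manufactures the map) with Theorem \ref{theorem:main} (which supplies injectivity). Existence is essentially formal: the universal vertex projections $\{p_A : A \in \bool\}$ of $C^*(E,\labe,\bool)$ satisfy $p_{A\cap B} = p_A p_B$, $p_{A\cup B} = p_A + p_B - p_{A\cap B}$ and $p_\emptyset = 0$ by part (i) of Definition \ref{lgdef}. These are precisely the hypotheses of Lemma \ref{lemma:map} taken with $\Xalg = C^*(E,\labe,\bool)$, so that lemma yields a unique $*$-homomorphism $\iota_{\Aalg}:\Aalg \to C^*(E,\labe,\bool)$ with $\iota_{\Aalg}(\cha{A}) = p_A$ for all $A \in \bool$. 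The entire content of the corollary therefore lies in proving that this $\iota_{\Aalg}$ is injective.

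For injectivity I would pass through the Cuntz--Pimsner picture. Let $(\pi,t)$ be a universal covariant representation of $(X(E,\labe,\bool),\varphi)$, and let $\phi: C^*(E,\labe,\bool) \to \Oalg_{(X(E,\labe,\bool),\varphi)}$ be the isomorphism produced in Theorem \ref{theorem:main}, which satisfies $\phi(p_A) = \pi(\cha{A})$ for every $A \in \bool$. Then $\phi\circ\iota_{\Aalg}$ and $\pi$ are $*$-homomorphisms from $\Aalg$ into $\Oalg_{(X(E,\labe,\bool),\varphi)}$ that agree on the generating set $\{\cha{A} : A \in \bool\}$ of $\Aalg$, and hence coincide; that is, $\pi = \phi\circ\iota_{\Aalg}$.

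The decisive input is that the coefficient algebra embeds into its Cuntz--Pimsner algebra, i.e.\ that the homomorphism $\pi$ coming from the universal covariant representation is injective. This is \cite[Proposition 4.11]{MR2102572} --- the very result already invoked in the proof of Theorem \ref{theorem:main} to conclude that $\pi(\cha{A})\ne 0$ for $A\in\bool\setminus\{\emptyset\}$. Granting it, $\iota_{\Aalg} = \phi\inv\circ\pi$ is the composite of the injective map $\pi$ with the isomorphism $\phi\inv$, and is therefore injective, which is exactly what is required.

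I expect the only genuine obstacle to be isolating and correctly applying the full injectivity of $\pi$; everything else is formal bookkeeping transported across the isomorphism of Theorem \ref{theorem:main}. It is worth flagging that the weaker statement $\pi(\cha{A})\ne0$ for $A\in\bool$ does \emph{not} obviously suffice, because injectivity of $\iota_{\Aalg}$ also forces the differences $p_C - p_D$ with $C\setminus D\in\hbool\setminus\{\emptyset\}$ to be nonzero, and nonvanishing on $\bool$ alone leaves open the possibility that $\pi(\cha{C})=\pi(\cha{D})$; this is precisely why one wants injectivity of $\pi$ rather than mere nonvanishing. (One could instead try to combine Lemma \ref{lemma:ideal} with nonvanishing of $\pi$ on all of $\hbool$, but factoring $\iota_{\Aalg}$ through $\pi$ is the cleaner route.)
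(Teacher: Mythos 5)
Your proposal is correct and is essentially the paper's own argument: the paper's proof of Corollary \ref{corol:iota} consists of the single line ``Follows from \cite[Proposition 4.11]{MR2102572}'', i.e.\ injectivity of the coefficient map $\pi$ into the Cuntz--Pimsner algebra, transported through the isomorphism of Theorem \ref{theorem:main}. Your writeup simply makes explicit the bookkeeping the paper leaves implicit (existence via Lemma \ref{lemma:map}, the identification $\pi = \phi\circ\iota_{\Aalg}$, and the correct observation that full injectivity of $\pi$, not mere nonvanishing on $\bool$, is what is needed).
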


\begin{proof}
Follows from Theorem \ref{theorem:main} and \cite[Proposition 4.11]{MR2102572}.
\end{proof}

In the appendix, it is explained how Corollary \ref{corol:iota} can be generalised to the case where $(E,\labe,\bool)$ is not normal (see Remark \ref{remark:nn}).

\begin{corollary}  \label{cor:giut}
Let $(E,\labe,\bool)$ be a weakly left-resolving normal labelled space. Let $\{ p_A,\ s_a : A \in \mathcal{B},\ a \in \mathcal{A} \}$ be the universal representation of $( E , \mathcal{L}  , \mathcal{B}  )$ that generates $C^* ( E , \mathcal{L}  , \mathcal{B}  )$, let $\{ q_A,\ t_a : A \in \mathcal{B},\ a \in \mathcal{A} \}$ be a representation of $( E , \mathcal{L}  , \mathcal{B}  )$ in a $C^*$-algebra $\Xalg$ and let $\pi$ be the unique $*$-homomorphism from $C^* ( E , \mathcal{L}  , \mathcal{B}  )$ to $\Xalg$ that maps each $p_A$ to $q_A$ and each $s_a$ to $t_a$. Then $\pi$ is injective if and only if $q_A$ is non-zero whenever $A\ne\emptyset$, and  for each $z\in\mathbb{T}$ there exists a $*$-homomorphism $\beta_z:C^*(\{ q_A,\ t_a : A \in \mathcal{B},\ a \in \mathcal{A} \})\to C^*(\{ q_A,\ t_a : A \in \mathcal{B},\ a \in \mathcal{A} \})$ such that $\beta_z(q_A)=q_A$ and $\beta_z(t_a)=zt_a$ for $A\in\bool$ and $a\in\mathcal{A}$.
\end{corollary}

\begin{proof}
Follows from Lemma \ref{lemma:ideal}, Theorem \ref{theorem:main}, and  \cite[Theorem 6.4]{MR2102572}.
\end{proof}

In the appendix, Corollary \ref{cor:giut} is generalised to the case where $(E,\labe,\bool)$ is not normal (see Corollary \ref{cor:giutm}).

\begin{corollary} \label{cor:nuclear}
Let $(E,\labe,\bool)$ be a weakly left-resolving normal labelled space. Then $C^*(E,\labe,\bool)$ is nuclear. If, in addition, $\bool$ and $\mathcal{A}$ are countable, then $C^*(E,\labe,\bool)$ satisfies the Universal Coefficient Theorem of \cite{MR894590}.
\end{corollary}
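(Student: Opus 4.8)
The plan is to transport both statements across the isomorphism $C^*(E,\labe,\bool)\cong\Oalg_{(X(E,\labe,\bool),\varphi)}$ furnished by Theorem \ref{theorem:main}, and then to read them off from the internal structure of the Cuntz-Pimsner algebra together with the one crucial feature of its coefficient algebra: that $\Aalg$, being a $C^*$-subalgebra of the bounded functions on $E^0$, is \emph{commutative}. The only external inputs will be the standard permanence properties of nuclearity and of the bootstrap class $\mathcal{N}$ of \cite{MR894590}, combined with the structure theory of Cuntz-Pimsner algebras in \cite{MR2102572}.

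For nuclearity, I would exploit the gauge action $\gamma$ of $\mathbb{T}$ on $\Oalg_{(X(E,\labe,\bool),\varphi)}$ produced in the proof of Theorem \ref{theorem:main}. By the analysis of \cite{MR2102572}, its fixed-point algebra (the core) is a countable inductive limit of $C^*$-algebras assembled from $\Aalg$ and from the algebras $\Kalg(X(E,\labe,\bool)^{\otimes n})$ of compact operators on the tensor powers of the correspondence. Since $\Aalg$ is commutative it is nuclear; each $\Kalg(X(E,\labe,\bool)^{\otimes n})$ is Morita equivalent to an ideal of $\Aalg$ and is therefore nuclear as well, and nuclearity passes to ideals, quotients, extensions, and countable inductive limits, so the core is nuclear. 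Since $\Oalg_{(X(E,\labe,\bool),\varphi)}$ is recovered from its core by the crossed-product construction dual to $\gamma$ (via Takai duality, $\Oalg_{(X(E,\labe,\bool),\varphi)}\otimes\Kalg$ arises from the core by a crossed product with $\Z$), and nuclearity is preserved under crossed products by $\Z$, under stabilisation and under passage to the relevant corner, it follows that $\Oalg_{(X(E,\labe,\bool),\varphi)}$, and hence $C^*(E,\labe,\bool)$, is nuclear. No countability hypothesis is needed here.

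For the UCT, I would first upgrade this to a separability statement. If $\bool$ and $\mathcal{A}$ are countable, then $\Aalg=\spc\{\cha{A}:A\in\bool\}$ is generated by countably many projections and is hence separable; moreover $X(E,\labe,\bool)=\spc_{\Aalg}\{e_a:a\in\mathcal{A}\}$ by Lemma \ref{lem:reallyuseful}(ii), so with $\mathcal{A}$ countable the correspondence is countably generated and $\Oalg_{(X(E,\labe,\bool),\varphi)}$ is separable. A separable commutative $C^*$-algebra lies in the bootstrap class $\mathcal{N}$, so $\Aalg\in\mathcal{N}$. Because $\mathcal{N}$ is closed under precisely the operations used above --- Morita equivalence (for the compacts on the tensor powers), extensions, countable inductive limits, and crossed products by $\Z$ --- the same structural presentation places $\Oalg_{(X(E,\labe,\bool),\varphi)}$ in $\mathcal{N}$, whence it satisfies the UCT of \cite{MR894590}.

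The main obstacle is bookkeeping rather than conceptual: one must check that every building block of the core genuinely inherits nuclearity and bootstrap-class membership from $\Aalg$ (in particular that $\Kalg(X(E,\labe,\bool)^{\otimes n})$ is Morita equivalent to an ideal of $\Aalg$, so that $K$-theoretic invariants transfer), and that the crossed-product and inductive-limit presentation of $\Oalg_{(X(E,\labe,\bool),\varphi)}$ supplied by \cite{MR2102572} really is of the form to which these permanence properties apply. Once those identifications are in place, both conclusions follow formally.
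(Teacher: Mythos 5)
Your overall strategy---transport the statement across the isomorphism of Theorem \ref{theorem:main} and then feed the commutativity of $\Aalg$ into permanence properties of nuclearity and of the bootstrap class---is exactly the paper's. The difference is that the paper finishes in two lines by quoting ready-made permanence theorems from \cite{MR2102572}: for nuclearity it invokes \cite[Corollary 7.4]{MR2102572} together with the observation that $\Aalg$ and $\Jalg$ are commutative, hence nuclear; for the UCT it observes that countability of $\bool$ and $\mathcal{A}$ makes $\Aalg$ and $(X(E,\labe,\bool),\varphi)$ separable and then invokes \cite[Proposition 8.8]{MR2102572}. So what you propose is, in effect, a reconstruction of the proofs of those two citations via the gauge action, the core and Takai duality. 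Much of your bookkeeping is fine: $\Kalg(X(E,\labe,\bool)^{\otimes n})$ is indeed Morita equivalent to an ideal of $\Aalg$, the core is an inductive limit of iterated extensions of such algebras and of quotients of $\Aalg$, and nuclearity (respectively, in the separable case, bootstrap membership) is closed under the operations you list.

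However, one step of the reconstruction is genuinely broken as stated: Takai duality gives $\Oalg_{(X(E,\labe,\bool),\varphi)}\otimes\Kalg(L^2(\mathbb{T}))\cong\bigl(\Oalg_{(X(E,\labe,\bool),\varphi)}\rtimes_\gamma\mathbb{T}\bigr)\rtimes_{\hat{\gamma}}\Z$, with the crossed product by $\mathbb{T}$---not the core---in the middle. The core is only a corner $p\,\bigl(\Oalg_{(X(E,\labe,\bool),\varphi)}\rtimes_\gamma\mathbb{T}\bigr)\,p$ of that crossed product, and to transfer nuclearity or bootstrap membership from the corner to the whole crossed product by Morita equivalence you need the corner to be full, i.e.\ the gauge action to be saturated. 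This fails in general: already for the zero correspondence (a labelled space consisting of a single sink, where $\Oalg_{(X(E,\labe,\bool),\varphi)}\cong\C$ with trivial gauge action) the corner sits as one summand of $\Oalg_{(X(E,\labe,\bool),\varphi)}\rtimes_\gamma\mathbb{T}\cong c_0(\Z)$ and generates a proper ideal; more generally fullness is tied to $\Jalg$ exhausting $\Aalg$, which fails precisely in the presence of sinks or of sets $A\in\bool$ with $\labe(AE^1)$ infinite. The standard repair is the Fell-bundle fact that a topologically $\Z$-graded $C^*$-algebra with faithful conditional expectation onto its unit fibre is nuclear whenever the unit fibre is (using amenability of $\Z$), applied to the grading of $\Oalg_{(X(E,\labe,\bool),\varphi)}$ by the spectral subspaces of $\gamma$; but this is essentially the content of the machinery behind Section 7 of \cite{MR2102572}, so once you need it you may as well cite \cite[Corollary 7.4]{MR2102572} and \cite[Proposition 8.8]{MR2102572} directly, as the paper does.
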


\begin{proof}
The first part follows from \cite[Corollary 7.4]{MR2102572} and the fact that $\Jalg$ and $\Aalg$ are commutative and thus nuclear. If, in addition, $\bool$ and $\mathcal{A}$ are countable, then $\Aalg$ and $(X(E,\labe,\bool),\varphi)$ are separable.  It then follows from \cite[Proposition 8.8]{MR2102572} that $C^*(E,\labe,\bool)$ satisfies the Universal Coefficient Theorem of \cite{MR894590}.
\end{proof}

In the appendix, it is explained how Corollary \ref{cor:nuclear} can be generalised to the case where $(E,\labe,\bool)$ is not normal (see Remark \ref{remark:nn}).

\section{$K$-theory} \label{sec:k-theory}

We now compute the $K$-theory of $C^*(E,\labe,\bool)$ when $(E,\labe,\bool)$ is a weakly left-resolving normal labelled space (see the appendix for a procedure for computing the $K$-theory when $(E,\labe,\bool)$ is not normal). Our approach is to use \cite[Theorem 8.6]{MR2102572} which involves detailed knowledge of the map $[X]: K_*(\Jalg)\to K_*(\Aalg)$. Since we will need to work explicitly with this map, we will now recall its definition in detail (cf. \cite[Definition 8.3]{MR2102572}).

Let $D_{X(E,\labe,\bool)}$ denote the linking algebra $\Kalg(X(E,\labe,\bool)\oplus\Aalg)$. Following \cite[Definition B.1]{MR2102572} we denote the natural embedding of $\Aalg$ into $D_{X(E,\labe,\bool)}$ by $\iota_{\Aalg}$ and the natural embedding of $\Kalg(X(E,\labe,\bool))$ into $D_{X(E,\labe,\bool)}$ by $\iota_{\Kalg(X(E,\labe,\bool))}$. It is shown in \cite[Proposition B.3]{MR2102572} that the map $(\iota_{\Aalg})_*:K_*(\Aalg)\to K_*(D_{X(E,\labe,\bool)})$ induced by $\iota_{\Aalg}$ is an isomorphism. The map $[X]: K_*(\Jalg)\to K_*(\Aalg)$ is then defined to be $(\iota_{\Aalg})_*\inv \circ(\iota_{\Kalg(X(E,\labe,\bool))})_* \circ(\varphi|_{\Jalg})_*$.

\begin{lemma} \label{lemma:keqv}
Let $(E,\labe,\bool)$ be a weakly left-resolving normal labelled space. Let $a \in \mathcal{A}$ and $A \in \hbool$ such that $A \subseteq r(a)$. Then for $[ \cha{A} ]_0 \in K_0(\Aalg)$ and $[ \theta_{e_a,e_a \cha{A}} ]_0 \in K_0(\Kalg(X(E, \labe, \bool)))$ we have
\begin{equation*}
( \iota_{\Aalg})_*([  \cha{A} ]_0 ) = ( \iota_{\Kalg( X ( E, \labe, \bool ) )} )_*([ \theta_{e_a,e_a \cha{A}} ]_0)
\end{equation*}
in $K_0 ( D_{X ( E , \labe , \bool )} )$.
\end{lemma}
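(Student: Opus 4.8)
The plan is to establish the asserted equality by producing an explicit Murray--von Neumann equivalence of projections inside the linking algebra $D_{X(E,\labe,\bool)}=\Kalg(X(E,\labe,\bool)\oplus\Aalg)$. Writing $X=X(E,\labe,\bool)$ for brevity, recall that this algebra consists of generalized compact operators on $X\oplus\Aalg$ and is spanned by rank-one operators $\theta_{(x,f),(y,g)}$ with $x,y\in X$ and $f,g\in\Aalg$. Under the conventions of \cite[Definition B.1]{MR2102572} the embedding $\iota_{\Aalg}$ identifies $\Aalg\cong\Kalg(\Aalg)$ with the ``bottom-right corner'', so that $\iota_{\Aalg}(\cha{A})=\theta_{(0,\cha{A}),(0,\cha{A})}$ (using that $\cha{A}$ is a projection), while $\iota_{\Kalg(X)}$ identifies $\Kalg(X)$ with the ``top-left corner'', so that $\iota_{\Kalg(X)}(\theta_{e_a,e_a\cha{A}})=\theta_{(e_a,0),(e_a\cha{A},0)}$.

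First I would record the consequences of the hypothesis $A\subseteq r(a)$ that the computation needs, all following from the arithmetic $\theta_{x,y}\theta_{u,v}=\theta_{x\langle y,u\rangle,v}$ together with $\langle xf,y\rangle=f^*\langle x,y\rangle$ and $\langle e_a,e_a\rangle=\cha{r(a)}$. The key identity is $\cha{A}\,\cha{r(a)}=\cha{A}$, which is exactly where $A\subseteq r(a)$ enters. From it one gets $\langle e_a\cha{A},e_a\cha{A}\rangle=\cha{A}\,\cha{r(a)}\,\cha{A}=\cha{A}$, and also $\theta_{e_a,e_a\cha{A}}=\theta_{e_a\cha{A},e_a}=\theta_{e_a\cha{A},e_a\cha{A}}$; in particular $\theta_{e_a,e_a\cha{A}}$ is genuinely a (self-adjoint) projection in $\Kalg(X)$, so that its class $[\theta_{e_a,e_a\cha{A}}]_0$ is meaningful.

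Next I would set $V=\theta_{(e_a,0),(0,\cha{A})}\in D_{X(E,\labe,\bool)}$, a single rank-one operator lying in the ``off-diagonal corner'' $\Kalg(\Aalg,X)$ (so it is honestly compact, not merely adjointable). Using the same arithmetic, and $\cha{A}\,\cha{r(a)}=\cha{A}$ once more, one computes
\begin{equation*}
V^*V=\theta_{(0,\cha{A}),(0,\cha{A})}=\iota_{\Aalg}(\cha{A}),\qquad VV^*=\theta_{(e_a\cha{A},0),(e_a,0)}=\iota_{\Kalg(X)}(\theta_{e_a,e_a\cha{A}}),
\end{equation*}
where the final equality uses $\theta_{e_a\cha{A},e_a}=\theta_{e_a,e_a\cha{A}}$ from the previous step. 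Since $V^*V$ is a projection, $V$ is a partial isometry, whence $V^*V$ and $VV^*$ are Murray--von Neumann equivalent projections and $[V^*V]_0=[VV^*]_0$ in $K_0(D_{X(E,\labe,\bool)})$. Reading both sides through the two embeddings yields $(\iota_{\Aalg})_*([\cha{A}]_0)=(\iota_{\Kalg(X)})_*([\theta_{e_a,e_a\cha{A}}]_0)$, as required. (Conceptually this is just the standard linking-algebra identity $(\iota_{\Aalg})_*[\langle x,x\rangle]_0=(\iota_{\Kalg(X)})_*[\theta_{x,x}]_0$ applied to $x=e_a\cha{A}$.)

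The computations themselves are routine once the bookkeeping is fixed, so the only point genuinely requiring care -- the main obstacle -- is matching the two abstract embeddings $\iota_{\Aalg}$ and $\iota_{\Kalg(X)}$ of \cite[Definition B.1]{MR2102572} with the concrete corner operators $\theta_{(0,\cha{A}),(0,\cha{A})}$ and $\theta_{(e_a,0),(e_a\cha{A},0)}$, and confirming that $V$ really lies in $\Kalg(X\oplus\Aalg)$. The hypothesis $A\subseteq r(a)$ is used only through $\cha{A}\,\cha{r(a)}=\cha{A}$, but it is essential: without it $\langle e_a\cha{A},e_a\cha{A}\rangle$ would not reduce to $\cha{A}$ and $V^*V$ would fail to equal $\iota_{\Aalg}(\cha{A})$.
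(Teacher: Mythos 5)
Your proof is correct and is essentially the paper's own argument: the partial isometry $V=\theta_{(e_a,0),(0,\cha{A})}$ you construct is exactly the operator $v\colon(\eta,f)\mapsto(e_a\cha{A}f,0)$ used in the paper, and both proofs conclude from the Murray--von Neumann equivalence $V^*V=\iota_{\Aalg}(\cha{A})$, $VV^*=\iota_{\Kalg(X(E,\labe,\bool))}(\theta_{e_a,e_a\cha{A}})$. Your formulation of $V$ as a rank-one operator has the minor advantage of making membership in $\Kalg(X(E,\labe,\bool)\oplus\Aalg)$ manifest, a point the paper leaves to the reader.
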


\begin{proof}
Let $v$ be the function that maps $( \eta, f ) \in X( E, \labe, \bool ) \times \Aalg$ to $(e_a \cha{A} f, 0) \in X( E, \labe, \bool) \times \Aalg$. One checks that $v \in \Kalg( X( E, \labe, \bool ) \oplus \Aalg )$, that $vv^* = \iota_{ \Kalg( X( E, \labe, \bool ))}( \theta_{e_a, e_a \cha{A}})$ and $v^*v = \iota_{\Aalg}( \cha{A} )$.  It follows that the elements $\iota_{ \Kalg( X( E, \labe, \bool ))}( \theta_{e_a, e_a \cha{A}})$ and $\iota_{ \Aalg }( \cha{A} )$ are equivalent in $K_0( D_{ X( E, \labe, \bool)})$.
\end{proof}

Since $\Aalg = \spc \{ \cha{A} : A \in \hbool \}$ and $\Jalg = \spc \{ \cha{A} : A \in \boolj \}$, it follows that there is an isomorphism from $\spa_\Z \{ \cha{A} :  A \in \hbool \}$ to $K_0(\Aalg)$ which for each $A\in\hbool$ takes $\cha{A}$ to $[\cha{A}]_0$, and an isomorphism from $\spa_\Z \{ \cha{A} :  A \in \boolj \}$ to $K_0(\Jalg)$ which for each $A\in\boolj$ takes $\cha{A}$ to $[\cha{A}]_0$. We will simply identify $K_0(\Aalg)$ with $\spa_\Z \{ \cha{A} :  A \in \hbool \}$, $K_0(\Jalg)$ with $\spa_\Z \{ \cha{A} :  A \in \boolj \}$ and each $[\cha{A}]_0$ with $\cha{A}$.

\begin{lemma} \label{lemma:ktheory}
Identifying $K_0(\Aalg)$ with $\spa_\Z \{ \cha{A} :  A \in \hbool \}$ and $K_0(\Jalg)$ with $\spa_\Z \{ \cha{A} :  A \in \boolj \}$, the homomorphism $[X] : K_0(\Jalg) \to K_0(\Aalg)$ induces the map $\Phi: \spa_\Z \{ \cha{A} :  A \in \boolj \} \to \spa_\Z \{ \cha{A} :  A \in \hbool \}$ determined by
\begin{equation} \label{eq:1}
\cha{A}  \mapsto \sum_{ a \in  \mathcal{L}(AE^1)} \cha{ r(A,a)} \mbox{ for } A \in \boolj.
\end{equation}
\end{lemma}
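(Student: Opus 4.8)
The plan is to unwind the definition of $[X]$ as the composite $(\iota_{\Aalg})_*\inv \circ (\iota_{\Kalg(X(E,\labe,\bool))})_* \circ (\varphi|_{\Jalg})_*$ and to evaluate it on each generator $\cha{A}$, $A\in\boolj$, of $K_0(\Jalg)\cong\spa_\Z\{\cha{A}:A\in\boolj\}$; since $[X]$ is a homomorphism, its values on these generators determine it. First I would note that $\cha{A}$ is a projection in $\Jalg$, so $(\varphi|_{\Jalg})_*([\cha{A}]_0)=[\varphi(\cha{A})]_0$ in $K_0(\Kalg(X(E,\labe,\bool)))$. Because membership in $\boolj$ forces $\mathcal{L}(AE^1)$ to be finite, Equation \eqref{eqn:compactphi} is available and gives the finite decomposition
\[
\varphi(\cha{A}) = \sum_{a \in \mathcal{L}(AE^1)} \theta_{e_a,\, e_a\cha{r(A,a)}}
\]
in $\Kalg(X(E,\labe,\bool))$.

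The crux is to split this $K_0$-class as the sum of the classes of the individual rank-one operators, and for that I would verify that the summands form a finite family of \emph{mutually orthogonal projections}. Since $r(A,a)\subseteq r(a)$ gives $\cha{r(a)}\cha{r(A,a)}=\cha{r(A,a)}$, one checks that $\theta_{e_a,\,e_a\cha{r(A,a)}}=\theta_{e_a\cha{r(A,a)},\,e_a\cha{r(A,a)}}$, and this is a projection because its vector has projection-valued inner product $\langle e_a\cha{r(A,a)},\,e_a\cha{r(A,a)}\rangle=\cha{r(A,a)}$. Moreover, for $a\neq a'$ the vectors $e_a\cha{r(A,a)}$ and $e_{a'}\cha{r(A,a')}$ lie in the orthogonal summands $X_a$ and $X_{a'}$, so the two projections annihilate one another. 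Hence $\varphi(\cha{A})$ is an orthogonal sum of finitely many projections and $[\varphi(\cha{A})]_0=\sum_{a\in\mathcal{L}(AE^1)}[\theta_{e_a,\,e_a\cha{r(A,a)}}]_0$.

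Finally I would apply the homomorphism $(\iota_{\Kalg(X(E,\labe,\bool))})_*$ term by term and invoke Lemma \ref{lemma:keqv} on each summand — legitimate because $r(A,a)\in\hbool$ and $r(A,a)\subseteq r(a)$ — to obtain
\[
(\iota_{\Kalg(X(E,\labe,\bool))})_*\bigl([\varphi(\cha{A})]_0\bigr) = \sum_{a\in\mathcal{L}(AE^1)} (\iota_{\Aalg})_*\bigl([\cha{r(A,a)}]_0\bigr) = (\iota_{\Aalg})_*\Bigl(\sum_{a\in\mathcal{L}(AE^1)}[\cha{r(A,a)}]_0\Bigr).
\]
Applying the isomorphism $(\iota_{\Aalg})_*\inv$ then yields $[X]([\cha{A}]_0)=\sum_{a\in\mathcal{L}(AE^1)}[\cha{r(A,a)}]_0$, which under the stated identifications of $K_0(\Jalg)$ and $K_0(\Aalg)$ with their integral spans is precisely the map $\Phi$ of \eqref{eq:1}. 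The main obstacle is the second paragraph: checking that $\varphi(\cha{A})$ genuinely decomposes as an orthogonal sum of projections is what permits computing the class summand-by-summand and applying Lemma \ref{lemma:keqv} one term at a time, and the finiteness of $\mathcal{L}(AE^1)$ built into $\boolj$ is exactly what makes both the sum finite and \eqref{eqn:compactphi} applicable.
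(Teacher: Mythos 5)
Your proposal is correct and takes essentially the same route as the paper's proof: both unwind the definition of $[X]$ as $(\iota_{\Aalg})_*\inv \circ (\iota_{\Kalg(X(E,\labe,\bool))})_* \circ (\varphi|_{\Jalg})_*$, invoke Equation \eqref{eqn:compactphi} (available because $A\in\boolj$ forces $\mathcal{L}(AE^1)$ to be finite), split the resulting $K_0$-class into the classes of the individual summands, and finish by applying Lemma \ref{lemma:keqv} term by term. Your second paragraph, checking that the $\theta_{e_a,e_a\cha{r(A,a)}}$ are mutually orthogonal projections, makes explicit a step the paper performs silently when it moves the sum outside the $K_0$-bracket, but this is added detail rather than a different argument.
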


\begin{proof}
Let $A\in\boolj$. Since $\phi_a( \cha{A} ) = \cha{r(A,a)}$ is a projection in $\Aalg$, we have $\varphi(\cha{A}) = \sum_{ a \in {\mathcal L}(AE^1)} \theta_{ e_a, e_a \cha{r(A,a)}}$ from Equation \eqref{eqn:compactphi}. It follows from Lemma \ref{lemma:keqv} that
\begin{equation*}
\begin{split}
( \iota_{\Kalg(X(E,\labe,\bool))} )_* \circ ( \varphi|_{ \Jalg })_*([ \cha{A} ]_0)
& = ( \iota_{ \Kalg(X(E,\labe,\bool))})_* \left( \left[ \sum_{a\in {\mathcal L}(AE^1)} \theta_{ e_a, e_a \cha{r(A,a)}} \right]_0 \right)\\
&= \sum_{a \in {\mathcal L}(AE^1)} (\iota_{\Kalg(X(E,\labe,\bool))})_* ([ \theta_{ e_a, e_a \cha{r(A,a)}} ]_0) \\
&= \sum_{ a \in \mathcal{L}(A E^1)} ( \iota_{\Aalg} )_*([ \cha{r(A,a)}]_0 ),
\end{split}
\end{equation*}
from which Equation \eqref{eq:1} follows using the given identifications and the definition of $[X]$.
\end{proof}

If $\mathcal{B} \subseteq 2^{E^0}$ is uncountable, then $\Aalg$ is not separable. However, it is still locally finite dimensional. Such nonseparable AF algebras were considered in
\cite{katsura}.

\begin{lemma} \label{lem:K1=0}
Let $(E,\labe,\bool)$ be a weakly left-resolving normal labelled space. Then $\Aalg$ and $\Jalg$ are locally finite dimensional algebras with $K_1 ( \Aalg ) =K_1(\Jalg )= 0$.
\end{lemma}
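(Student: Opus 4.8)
The plan is to show that $\Aalg$ and $\Jalg$ are locally finite dimensional commutative $C^*$-algebras and then invoke the standard fact that finite-dimensional commutative $C^*$-algebras have trivial $K_1$ together with continuity of $K$-theory under inductive limits. Recall from the discussion preceding the lemma that $\Aalg = \spc\{\cha{A} : A \in \hbool\}$ and, by Lemma \ref{lemma:j}, that $\Jalg = \spc\{\cha{A} : A \in \boolj\}$, so both are generated by commuting projections $\cha{A}$ inside the commutative algebra of bounded functions on $E^0$. Thus both are commutative.

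First I would establish local finite dimensionality. For $\Aalg$, the proof of Lemma \ref{lemma:map} already exhibits the key structure: for any $\bool' \in \mathfrak{F}$ (a finite subcollection of $\tbool$ closed under relative complements, intersections and unions), $\spa\{\cha{A} : A \in \bool'\} = \spa\{\cha{A} : A \in F\}$ for a finite family $F$ of mutually disjoint sets, so this span is a finite-dimensional commutative subalgebra isomorphic to $\C^{|F|}$. Since $\bigcup_{\bool' \in \mathfrak{F}} \spa\{\cha{A} : A \in \bool'\}$ is dense in $\Aalg$, writing $\Aalg$ as the closure of the increasing union of these finite-dimensional subalgebras shows it is locally finite dimensional (AF when $\bool$ is countable, and locally finite dimensional in the sense of \cite{katsura} in general). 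The same argument applies to $\Jalg$: by Lemma \ref{lemma:j} it is the closed span of $\{\cha{A} : A \in \boolj\}$ with $\boolj \subseteq \hbool$, and the elements $\cha{A}$ for $A$ ranging over any finite subcollection again span a finite-dimensional algebra of the same disjointification form.

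Then I would compute $K_1$. Each finite-dimensional subalgebra is of the form $\C^n$, which has $K_1(\C^n) = 0$. By continuity of $K$-theory, $K_1$ of the (closure of the) inductive limit is the inductive limit of the groups $K_1$ of the finite-dimensional stages, each of which vanishes, so $K_1(\Aalg) = K_1(\Jalg) = 0$. The continuity argument works equally well in the nonseparable locally finite dimensional setting handled in \cite{katsura}, which is the only point where one must be slightly careful if $\bool$ is uncountable.

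The main obstacle is purely the nonseparable case: the cleanest formulation of continuity of $K$-theory is usually stated for sequential (countable) inductive limits, so I expect the one nontrivial step to be justifying that $K_1 = 0$ still holds when $\Aalg$ is a nonseparable locally finite dimensional algebra. This is exactly the situation addressed in \cite{katsura}, so I would cite that reference for the relevant continuity statement rather than reprove it; everything else is routine bookkeeping with disjointifications of the $\cha{A}$.
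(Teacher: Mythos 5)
Your proof is correct, but the $K_1$ computation takes a genuinely different route from the paper's. The local finite dimensionality step is essentially the same in both arguments (the disjointification underlying Lemma \ref{lemma:map}; the paper simply cites the argument of \cite[Theorem 5.3]{MR2304922}). For $K_1$, however, the paper argues directly with unitaries: after adjoining $E^0$ to $\bool$ so that $\Aalg$ is unital, any unitary $u \in \Aalg$ lies within distance $\epsilon < 1$ of a unitary $u'$ in a finite-dimensional unital subalgebra, whose unitary group is connected, so $u$ is homotopic to $1_{\Aalg}$; you instead exhibit $\Aalg$ and $\Jalg$ as closures of directed unions of finite-dimensional subalgebras (honest inductive limits, over $\mathfrak{F}$ and over finite subsets of $\boolj$ respectively) and invoke continuity of $K$-theory. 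Each approach buys something. Yours handles matrix amplifications for free --- $K_1$ concerns unitaries in all $M_n(\Aalg)$, a point the paper's path-of-unitaries argument silently glosses over (repairable by noting that $M_n(\Aalg)$ is again locally finite dimensional) --- and it proves the stronger statement that these algebras are AF in the inductive-limit sense, not merely locally finite dimensional, notions which need not coincide for nonseparable algebras (cf.\ \cite{katsura}). The paper's route is more elementary: it needs only local approximation and never appeals to continuity of $K$-theory over an uncountable directed set; that continuity is true and standard for arbitrary directed systems, but, as you rightly flag, it is usually stated sequentially, and note that \cite{katsura} is the paper's reference for the notion of nonseparable AF algebras rather than for that continuity statement, so a standard $K$-theory reference would be the better citation there. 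Two small bookkeeping repairs to your write-up: for $\Jalg$, the linear span of $\{\cha{A} : A \in S\}$ for a finite $S \subseteq \boolj$ need not be an algebra, since $\boolj$ is not closed under the Boolean operations, so you should pass to the finite-dimensional $*$-algebra generated by these projections, which still lies in $\Jalg$ because $\Jalg$ is an ideal; and you should state explicitly that your systems are directed (the set algebra generated by the union of two members of $\mathfrak{F}$ is finite and again a member), which is what legitimises the colimit identification.
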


\begin{proof}
We prove the result for $\Aalg$ and note that the proof for $\Jalg$ is similar. The first statement follows by the argument given in the proof of \cite[Theorem 5.3]{MR2304922}. To prove that $K_1 ( \Aalg ) = 0$, we argue as follows\footnote{Thanks to Iain Raeburn for showing us how to do this.}. By adding $E^0$ to $\mathcal{B}$ (if necessary) we may assume that $\Aalg$ is unital. Let $u$ be a unitary in $\Aalg$, then given $0 < \epsilon < 1$ there is a unitary $u'$ in $\spa_\Z \{ \cha{A} :  A \in\hbool \}$ such that $\| u - u' \| < \epsilon$. Using the local finite dimensionality of $\Aalg$ one can show that $u'$ belongs to a finite dimensional subalgebra $U$ of $\Aalg$ which contains the identity $1_{\Aalg}$. Since the unitary group of a finite dimensional $C^*$-algebra is connected there is a continuous path of unitaries from $u'$ to $1_{\Aalg} = \cha{E^0}$ which remains continuous when we consider $U$ as a subalgebra of $\Aalg$. Since $\| u -  u' \| < \epsilon < 1$ there is a continuous path of unitaries from $u$ to $u'$ and hence from $u$ to $1_{\Aalg}$. The result now follows.
\end{proof}

\begin{theorem} \label{theorem:ktheory}
Let $(E,\labe,\bool)$ be a weakly left-resolving normal labelled space. Let $(1-\Phi)$ be the linear map from $\spa_\Z\{\cha{A}: A\in\boolj\}$ to $\spa_\Z \{ \cha{A} :  A \in \hbool \}$ given by
\begin{equation*}
(1-\Phi) ( \cha{A} ) = \cha{A} - \sum_{a \in \labe(AE^1)} \cha{r(A,a)}, \quad  A \in \boolj.
\end{equation*}
Then $K_1(C^*(E,\labe,\bool)) \cong \ker(1-\Phi)$, and $K_0(C^*(E,\labe,\bool)) \cong \spa_\Z\{\cha{A}: A\in\hbool\}/\im(1-\Phi)$ via $[p_A]_0 \mapsto \cha{A}+\im(1-\Phi)$ for $A \in \hbool$.
\end{theorem}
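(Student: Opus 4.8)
The plan is to compute the $K$-theory through the Cuntz-Pimsner realisation and Katsura's six-term exact sequence, and then to translate the abstract boundary map into the combinatorial map $1-\Phi$. Set $\Oalg := \Oalg_{(X(E,\labe,\bool),\varphi)}$, so that $\Oalg \cong C^*(E,\labe,\bool)$ by Theorem \ref{theorem:main}. First I would invoke \cite[Theorem 8.6]{MR2102572}, which furnishes the cyclic six-term exact sequence whose horizontal maps $K_i(\Jalg) \to K_i(\Aalg)$ are $\iota_*-[X]$ --- with $\iota_*$ induced by the inclusion $\Jalg \hookrightarrow \Aalg$ and $[X]$ the map recalled just before Lemma \ref{lemma:ktheory} --- and whose maps $K_i(\Aalg) \to K_i(\Oalg)$ are induced by the canonical embedding $\pi$ of $\Aalg$ into $\Oalg$. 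Under the isomorphism of Theorem \ref{theorem:main} this embedding is the map $\iota_{\Aalg}$ of Corollary \ref{corol:iota}, so that on $K_0$ it sends $\cha{A}$ to $[p_A]_0$.

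Next I would use Lemma \ref{lem:K1=0}, which gives $K_1(\Aalg) = K_1(\Jalg) = 0$. Substituting these zeros, the six-term sequence collapses into the two exact pieces
\begin{equation*}
K_0(\Jalg) \xrightarrow{\iota_*-[X]} K_0(\Aalg) \xrightarrow{\pi_*} K_0(\Oalg) \to 0
\end{equation*}
and
\begin{equation*}
0 \to K_1(\Oalg) \to K_0(\Jalg) \xrightarrow{\iota_*-[X]} K_0(\Aalg).
\end{equation*}
A routine diagram chase then reads off $\pi_*$ as surjective with kernel $\im(\iota_*-[X])$, whence $K_0(\Oalg) \cong \coker(\iota_*-[X])$, and identifies the connecting map $K_1(\Oalg) \to K_0(\Jalg)$ as injective with image $\ker(\iota_*-[X])$, whence $K_1(\Oalg) \cong \ker(\iota_*-[X])$.

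It remains to recognise $\iota_*-[X]$ as $1-\Phi$. Under the identifications of $K_0(\Aalg)$ with $\spa_\Z\{\cha{A}: A\in\hbool\}$ and of $K_0(\Jalg)$ with $\spa_\Z\{\cha{A}: A\in\boolj\}$ fixed just before Lemma \ref{lemma:ktheory}, the inclusion $\iota$ carries $\cha{A}$ to $\cha{A}$, so $\iota_*$ becomes the natural inclusion of $\spa_\Z\{\cha{A}: A\in\boolj\}$ into $\spa_\Z\{\cha{A}: A\in\hbool\}$, while Lemma \ref{lemma:ktheory} identifies $[X]$ with $\Phi$. Hence $\iota_*-[X]$ is precisely the map $1-\Phi$ of the statement, and the two isomorphisms above become $K_1(C^*(E,\labe,\bool)) \cong \ker(1-\Phi)$ and $K_0(C^*(E,\labe,\bool)) \cong \spa_\Z\{\cha{A}: A\in\hbool\}/\im(1-\Phi)$. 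Since $\pi_*$ sends $\cha{A}$ to $[p_A]_0$, the induced isomorphism on the cokernel carries $\cha{A}+\im(1-\Phi)$ to $[p_A]_0$, so its inverse is $[p_A]_0 \mapsto \cha{A}+\im(1-\Phi)$, as claimed.

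I expect the only real obstacle to be bookkeeping: one must confirm that Katsura's orientation of the sequence places the cokernel at $K_0$ and the kernel at $K_1$ (rather than the reverse), and that the ``$1$'' of $1-\Phi$ is genuinely the inclusion-induced map $\iota_*$. Both points are settled by matching the conventions of \cite[Theorem 8.6]{MR2102572} against the identifications set up after Lemma \ref{lemma:keqv}; the underlying $K$-theoretic computations of the maps have already been carried out in Lemmas \ref{lemma:keqv}, \ref{lemma:ktheory} and \ref{lem:K1=0}.
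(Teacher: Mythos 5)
Your proof is correct and follows essentially the same route as the paper: the paper's own proof is a one-line citation of exactly the ingredients you assemble, namely Katsura's six-term sequence \cite[Theorem 8.6]{MR2102572}, Theorem \ref{theorem:main}, Lemma \ref{lemma:ktheory}, and the vanishing $K_1(\Aalg)=K_1(\Jalg)=0$ from Lemma \ref{lem:K1=0}. Your write-up merely makes explicit the collapse of the six-term sequence into the two exact pieces and the identification of $\iota_*-[X]$ with $1-\Phi$, steps the paper leaves to the reader.
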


\begin{proof}
The result follows by \cite[Theorem 8.6]{MR2102572}, Theorem \ref{theorem:main}, Lemma \ref{lemma:ktheory} and the fact that $K_1(\Aalg)=K_1(\Jalg )=0$ from Lemma \ref{lem:K1=0}.
\end{proof}

In the appendix, Theorem \ref{theorem:ktheory} is generalises to the case where $(E,\labe,\bool)$ is not normal (see Theorem \ref{thm:ktheorym}).

\section{Examples} \label{sec:examples}

\noindent
In this section we show that Theorem \ref{theorem:ktheory} unifies the $K$-theory formulas for several interesting classes of $C^*$-algebras.

\subsection*{Example 1 -- Graph algebras}

Let $E$ be a directed graph. Following \cite[Examples 3.3 and 4.3 (i)]{MR2304922} we may realise $C^*(E)$ as the $C^*$-algebra of a left-resolving normal labelled space in the following manner. We take the trivial labelling ${\mathcal L}_t:  E^1 \to E^1$ defined for $e \in E^1$ by ${\mathcal L}_t(e) = e$.  Then $(E,\mathcal{L})$ is a left-resolving labelled graph and ${\mathcal E}^{0,-}$ is the set of all finite subsets of $E^0$. Since $(E,\mathcal{L})$ is left-resolving, it follows that the labelled space $(E,{\mathcal L}_t,{\mathcal E}^{0,-})$, in addition to being normal, is weakly left-resolving. By \cite[Proposition 5.1 (i)]{MR2304922} there is an isomorphism $\phi: C^*(E) \to C^*(E,{\mathcal L}_t,{\mathcal E}^{0,-})$ that maps $P_v$ to $p_{\{v\}}$ and $S_e$ to $s_e$ where $\{ S_e ,\ P_v : e \in E^1,\ v \in E^0 \}$ and $\{ s_e ,\ p_A : e \in E^1,\ A \in \mathcal{E}^{0,-} \}$ are the canonical generators of $C^*(E)$ and $C^*( E, \mathcal{L}_t , \mathcal{E}^{0,-})$, respectively.

Let $E^0_{ns} = \{v \in E^0 : 0 < | s^{-1}(v) | < \infty \}$ denote the set of nonsingular vertices in $E^0$.  For each $v \in  E^0_{ns}$ we let $e_v \in \bigoplus_{v \in E^0_{ns}} \Z$  be defined by $e_v = ( \delta_{v,w} )_{w \in E^0_{ns}}$.  We similarly define $e_v \in \bigoplus_{v \in E^0} \Z$ for each $v \in E^0$.

Note that $\mathcal{E}^{0,-}_J$ is precisely the collection of all finite subsets of $E^0_{ns}$.  One checks that there are isomorphisms $\psi :  \spa_\Z \{ \cha{A} : A \in \hbool \} \to \bigoplus_{ v \in E^0 } \Z$ and $\psi_J: \spa_\Z \{ \cha{A} : A \in \mathcal{E}^{0,-}_J \} \to \bigoplus_{ v \in E^0_{ns}} \Z$ such that for all $v \in E^0$ (respectively\ $v \in E^0_{ns}$) we have $\psi( \cha{v} ) = e_v$ (respectively\ $\psi_J( \cha{v} ) = e_v$).  For $v \in E^0_{ns}$ we have  $\Phi( \cha{v} )=\sum_{ f \in  s \inv \{ v \} } \cha{ r(f) }$ and we may now deduce the following Corollary from Theorem \ref{theorem:ktheory}.

\begin{corollary}[{cf.\ \cite[Theorem 3.1]{MR1936076}}]
Let $E$ be a directed graph. With notation as above, define a linear map $(1 - \Phi) : \bigoplus_{ v \in E^0_{ns} } \Z \to \bigoplus_{ v \in E^0 } \Z$ by
\begin{equation*}
(1- \Phi) (e_v)  = e_v - \sum_{f \in s \inv \{v\} } e_{ r(f) },\quad v\in E^0_{ns}.
\end{equation*}
Then $K_1(C^*(E))$ is isomorphic to $\ker(1-\Phi)$ and there exists an isomorphism from $K_0(C^*(E))$ to $\coker(1 - \Phi)$ which maps $[ P_v ]_0$ to $e_v + \im( 1 - \Phi )$ for each $v \in E^0$.
\end{corollary}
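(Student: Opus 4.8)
The plan is to specialise Theorem \ref{theorem:ktheory} to the regular weakly left-resolving labelled space $(E,\labe_t,\bool)$ and then transport the resulting abstract $K$-groups along the isomorphisms $\psi$ and $\psi_J$ to the concrete groups $\bigoplus_{v\in E^0}\Z$ and $\bigoplus_{v\in E^0_{ns}}\Z$. Since the isomorphism $\phi:C^*(E)\to C^*(E,\labe_t,\bool)$ carries $P_v$ to $p_{\{v\}}$, it induces isomorphisms on $K$-theory, so it suffices to describe $K_*(C^*(E,\labe_t,\bool))$ and then pull the description back along $\phi_*$.

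First I would record that for the trivial labelling $\hbool$ and $\boolj$ are exactly the finite subsets of $E^0$ and of $E^0_{ns}$ respectively, so that $\spa_\Z\{\cha{A}:A\in\hbool\}$ and $\spa_\Z\{\cha{A}:A\in\boolj\}$ are free abelian groups on the singleton generators $\{\cha{v}:v\in E^0\}$ and $\{\cha{v}:v\in E^0_{ns}\}$. In particular $\Phi$ is determined by its values on these generators, which by the computation preceding the statement equal $\Phi(\cha{v})=\sum_{f\in s\inv\{v\}}\cha{r(f)}$ for $v\in E^0_{ns}$, and $\psi$, $\psi_J$ are the evident isomorphisms sending $\cha{v}$ to $e_v$.

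The key step is to verify that $\psi$ and $\psi_J$ intertwine the abstract map $(1-\Phi)$ of Theorem \ref{theorem:ktheory} with the concrete map $(1-\Phi)$ of the statement, that is, $\psi\circ(1-\Phi)=(1-\Phi)\circ\psi_J$. As both sides are homomorphisms between free abelian groups, it is enough to check equality on each generator $\cha{v}$ with $v\in E^0_{ns}$: the abstract side gives $\psi\bigl(\cha{v}-\sum_{f\in s\inv\{v\}}\cha{r(f)}\bigr)=e_v-\sum_{f\in s\inv\{v\}}e_{r(f)}$ using $\psi(\cha{w})=e_w$, while the concrete side gives $(1-\Phi)(e_v)=e_v-\sum_{f\in s\inv\{v\}}e_{r(f)}$, and these agree. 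Since $\psi_J$ and $\psi$ are isomorphisms, they restrict to an isomorphism of kernels and descend to an isomorphism of cokernels.

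Finally I would conclude. By Theorem \ref{theorem:ktheory}, $K_1(C^*(E,\labe_t,\bool))\cong\ker(1-\Phi)$ on the abstract side, and hence $K_1(C^*(E))\cong\ker(1-\Phi)$ on the concrete side via $\psi_J$. Likewise $K_0(C^*(E,\labe_t,\bool))\cong\spa_\Z\{\cha{A}:A\in\hbool\}/\im(1-\Phi)$, and the intertwining above shows $\psi$ carries $\im(1-\Phi)$ onto the concrete $\im(1-\Phi)$, so it induces an isomorphism onto $\coker(1-\Phi)$. For the generators, the isomorphism of Theorem \ref{theorem:ktheory} sends $[p_{\{v\}}]_0$ to $\cha{v}+\im(1-\Phi)$; composing $\phi_*$ with this isomorphism and with the map induced by $\psi$ therefore sends $[P_v]_0$ to $e_v+\im(1-\Phi)$, as required. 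The only real care needed is the bookkeeping of the two identifications and confirming that $\spa_\Z$ of the singletons is genuinely spanned by singletons so that checking on generators suffices; there is no substantive obstacle here.
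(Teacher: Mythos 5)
Your proposal is correct and follows essentially the same route as the paper: the paper's argument is exactly the setup preceding the corollary (trivial labelling, identification of $\hbool$ and $\boolj$ with the finite subsets of $E^0$ and $E^0_{ns}$, the isomorphisms $\psi$ and $\psi_J$, and the computation $\Phi(\cha{\{v\}})=\sum_{f\in s\inv\{v\}}\cha{r(f)}$), followed by an appeal to Theorem \ref{theorem:ktheory} and the isomorphism $C^*(E)\cong C^*(E,\mathcal{L}_t,\mathcal{B})$ of \cite[Proposition 5.1 (i)]{MR2304922}. Your explicit verification of the intertwining $\psi\circ(1-\Phi)=(1-\Phi)\circ\psi_J$ and the tracking of generators merely spells out details the paper leaves implicit.
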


\subsection*{Example 2 -- Ultragraph algebras}

For the definition and properties of an ultragraph see \cite{tom}. Following \cite[Examples 3.3 and 4.3 (ii)]{MR2304922} we may realise an ultragraph as a labelled graph as follows: Let ${\mathcal G} = (G^0,{\mathcal G}^1,r,s)$ be an ultragraph. We define a directed graph $E_{\mathcal G} = (E_{\mathcal G}^0,E_{\mathcal G}^1,r',s')$ by putting $E_{\mathcal G}^0 = G^0$, $E_{\mathcal G}^1 = \{ (e,w) : e \in {\mathcal G}^1,\ w  \in r(e) \}$, $r'(e,w) = w$ and $s'(e,w) = s(e)$. Setting ${\mathcal A} = {\mathcal G}^1$ we may define a labelling map ${\mathcal L}_{\mathcal G} : E^1 \to {\mathcal A}$ by ${\mathcal L}_{\mathcal G} (e,w) = e$.  If we let ${\mathcal G}^0$ be as defined in \cite[Definition 2.4]{arXiv:math/0611318}, then $(E_{\mathcal G}, {\mathcal L}_{\mathcal G},{\mathcal G}^0)$ is a normal weakly left-resolving labelled space. As in \cite[Proposition 5.1 (ii)]{MR2304922}, using the gauge invariant uniqueness theorem \cite[Theorem 6.8]{tom} there is an isomorphism $\phi : C^*({\mathcal G}) \to C^*(E_{\mathcal G}, {\mathcal L}_{\mathcal G}, {\mathcal G}^0)$ that maps $P_A$ to $p_A$ and $S_e$ to $s_e$ where $\{ S_e , P_A \}$ and $\{ s_e , p_A \}$ are the canonical generators of $C^*({\mathcal G})$ and $C^*(E_{\mathcal G},{\mathcal L}_{\mathcal G},{\mathcal G}^0)$, respectively.

The following definitions are taken from \cite{arXiv:math/0611318}. Let $\mathfrak{A}_\mathcal{G}$ denote the $C^*$-subalgebra of $\ell^\infty (G^0)$ generated by the point masses $\{ \delta_v :  v \in G^0 \}$ and the characteristic functions $\{ \cha{ r(e) }  :  e \in \mathcal{G}^1 \}$ and let $Z_\mathcal{G}$ denote the algebraic subalgebra of $\ell^\infty ( G^0, \Z )$ generated by these functions.  Let $G^0_{rg} \subseteq G^0$ denote the set of vertices $v \in G^0$ satisfying $0 < | s^{-1}(v) | < \infty$.   For $v \in G^0_{rg}$ let $e_v \in \bigoplus_{v \in G^0_{rg}}\Z$ be defined by $e_v = ( \delta_{v,w} )_{w \in G^0_{rg}}$ and define $e_v \in Z_\mathcal{G} \subseteq \ell^\infty (G_0 , \Z)$ similarly for $v \in E^0$.

It follows from \cite[Proposition 2.6 and 2.20]{arXiv:math/0611318} that $Z_\mathcal{G} = K_0( \mathfrak{A}_\mathcal{G} )=\spa_\Z \{ \cha{A} : A \in {\mathcal G}^0 \}$. One checks that $A \in {\mathcal G}^0$ belongs to ${\mathcal G}^0_J$ if and only if $A$ is a finite subset of $G^0_{rg}$.  It then follows that there is an isomorphism from $\bigoplus_{ v \in G^0_{rg} } \Z$ to $\spa_\Z \{ \cha{A} \,: \, A \in {\mathcal G}^0_J \}$ which maps $e_v$ to $\cha{ \{ v \} }$ for every $v \in G^0_{rg}$. If $v \in G^0_{rg}$, then $\Phi( \cha {\{ v \} })=\sum_{ f \in  s\inv \{v\} } \cha{r(f)}$, and we may deduce the following Corollary to Theorem \ref{theorem:ktheory}.

\begin{corollary}[{cf. \cite[Theorem 5.4]{arXiv:math/0611318}}]
Let $\mathcal{G}=(G^0,\mathcal{G}^1,r,s)$ be an ultragraph. With notation as above, define a linear map $(1 - \Phi ) : \bigoplus_{ v \in G^0_{rg} } \Z \to Z_\mathcal{G}$  by
\begin{equation*}
( 1 - \Phi ) (e_v)= e_v - \sum_{ f \in s\inv \{ v \} } \cha{ r(f) }, \quad v \in G^0_{rg}.
\end{equation*}
Then $K_1(C^*(\mathcal{G}))$ is isomorphic to $\ker(  1 - \Phi )$ and there is an isomorphism from $K_0( C^*( \mathcal{G} ))$ to $\coker( 1 - \Phi )$ which maps $[P_A]_0$ to $\cha{A} + \im(1 - \Phi)$ for each $ A \in \mathcal{G}^0$ .
\end{corollary}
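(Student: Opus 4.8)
The plan is to deduce this corollary directly from Theorem \ref{theorem:ktheory} applied to the regular weakly left-resolving labelled space $(E_{\mathcal G}, {\mathcal L}_{\mathcal G}, \bool)$ realising $C^*(\mathcal{G})$, transporting the conclusion through the identifications already assembled above. Theorem \ref{theorem:ktheory} gives $K_1(C^*(E_{\mathcal G}, {\mathcal L}_{\mathcal G}, \bool)) \cong \ker(1-\Phi)$ together with an isomorphism $K_0(C^*(E_{\mathcal G}, {\mathcal L}_{\mathcal G}, \bool)) \cong \spa_\Z\{\cha{A}: A\in\hbool\}/\im(1-\Phi)$ taking $[p_A]_0$ to $\cha{A}+\im(1-\Phi)$, where $1-\Phi$ is the map of that theorem on $\spa_\Z\{\cha{A}: A\in\boolj\}$.

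First I would feed in the isomorphism $\phi : C^*({\mathcal G}) \to C^*(E_{\mathcal G}, {\mathcal L}_{\mathcal G}, \bool)$ of \cite[Proposition 5.1 (ii)]{MR2304922}, which satisfies $\phi(P_A)=p_A$. The induced map $\phi_*$ is an isomorphism carrying $[P_A]_0$ to $[p_A]_0$, so precomposing the two displayed isomorphisms with $\phi_*$ yields $K_1(C^*({\mathcal G})) \cong \ker(1-\Phi)$ and an isomorphism $K_0(C^*({\mathcal G})) \cong \spa_\Z\{\cha{A}: A\in\hbool\}/\im(1-\Phi)$ sending $[P_A]_0$ to $\cha{A}+\im(1-\Phi)$.

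Next I would rewrite the domain and codomain of $1-\Phi$ using the two identifications recorded before the statement: the equality $\spa_\Z\{\cha{A}:A\in\hbool\} = Z_\mathcal{G}$ from \cite[Proposition 2.6 and 2.20]{arXiv:math/0611318}, and the isomorphism $\bigoplus_{v\in G^0_{rg}}\Z \to \spa_\Z\{\cha{A}:A\in\boolj\}$ sending $e_v$ to $\cha{\{v\}}$, which rests on the observation that an element of $\hbool$ lies in $\boolj$ exactly when it is a finite subset of $G^0_{rg}$. Under these identifications $1-\Phi$ becomes a map $\bigoplus_{v\in G^0_{rg}}\Z \to Z_\mathcal{G}$; evaluating on the generator corresponding to $e_v$ and using the given formula $\Phi(\cha{\{v\}}) = \sum_{f\in s\inv\{v\}}\cha{r(f)}$ gives $(1-\Phi)(e_v) = e_v - \sum_{f\in s\inv\{v\}}\cha{r(f)}$, with $e_v$ on the right read as the point mass in $Z_\mathcal{G}$. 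This is precisely the map in the statement, and the corollary follows.

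I expect no genuine obstacle, since all the work resides in Theorem \ref{theorem:ktheory} and the cited identifications; the only care needed is bookkeeping, namely checking that the generator $e_v$ of $\bigoplus_{v\in G^0_{rg}}\Z$ corresponds under the domain isomorphism to $\cha{\{v\}}$, that $\cha{\{v\}}$ in turn equals the point mass $e_v\in Z_\mathcal{G}$ appearing on the right-hand side, and that $\Phi(\cha{\{v\}})$ indeed equals $\sum_{f\in s\inv\{v\}}\cha{r(f)}$ as asserted, so that the two maps named $1-\Phi$ really coincide.
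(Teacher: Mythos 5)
Your proposal is correct and follows essentially the same route as the paper: the paper's own argument consists precisely of the preceding identifications (the isomorphism $\phi$ from \cite[Proposition 5.1 (ii)]{MR2304922}, the equality $Z_\mathcal{G}=\spa_\Z\{\cha{A}:A\in\hbool\}$, the characterisation of $\boolj$ as the finite subsets of $G^0_{rg}$, and the formula for $\Phi(\cha{\{v\}})$) combined with an appeal to Theorem \ref{theorem:ktheory}. Your bookkeeping of how $e_v$, $\cha{\{v\}}$ and the point mass in $Z_\mathcal{G}$ correspond is exactly the verification the paper leaves implicit.
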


\subsection*{Example 3 -- Matsumoto algebras}

We are interested in two $C^*$-algebras which Matsumoto associates to a (two-sided) shift space $\Lambda$ over a finite alphabet $\mathcal{A}$. The first, which we denote $\mathcal{O}_{\Lambda^*}$, was described in \cite{MR1454478}, \cite{MR1646513}, \cite{MR1691469}, \cite{MR1688137}, \cite{MR1774695} and \cite{MR1764930}. It follows from an argument similar to the one used in \cite[Theorem 6.3]{MR2304922} (but using Corollary \ref{cor:giut} instead of \cite[Theorem 5.3]{MR2304922}) that $\mathcal{O}_{\Lambda^*} \cong C^* ( E_{\Lambda^*} , \mathcal{L}_{\Lambda^*} , \mathcal{E}_{\Lambda^*}^{0,-} )$ where $( E_{\Lambda^*} , \mathcal{L}_{\Lambda^*} )$ is the predecessor graph (or past set cover) of $\Lambda$, as defined in \cite[Examples 3.3]{MR2304922} (vii) (see also \cite[Definition 5.2]{bep}).

The second, which we denote $\mathcal{O}_\Lambda$ was described in \cite{MR2091486} and \cite{MR1852456}. Under a technical hypothesis, analogous to Cuntz and Krieger's condition (I), it follows from an argument similar to the one used in \cite[Theorem 6.3]{MR2304922} (again using Corollary \ref{cor:giut} instead of \cite[Theorem 5.3]{MR2304922}) that $\mathcal{O}_{\Lambda} \cong C^* ( E_{\Lambda} , \mathcal{L}_{\Lambda} , \mathcal{E}_{\Lambda}^{0,-} )$ where $( E_{\Lambda} , \mathcal{L}_{\Lambda} )$ is the left Krieger cover of $\Lambda$, as defined in \cite[Section 2]{MR0776312} (see also \cite[Examples 3.3 (vi)]{MR2304922} and \cite[Definition 2.11]{bep}).

Let $\mathcal{A}^* \setminus \{ \epsilon \}$ denote the set of non-empty finite words over $\mathcal{A}$. Let $\{ S_a : a \in \mathcal{A} \}$ be the generators of $\mathcal{O}_\Lambda$ (respectively $\mathcal{O}_{\Lambda^*}$). For $u=u_1u_2\dots u_m\in\mathcal{A}^* \setminus \{ \epsilon \}$, let $S_u:=S_{u_1}S_{u_2}\dots S_{u_m}$.

\begin{corollary}[{cf. \cite[Theorem 4.9]{MR1646513} and \cite[Lemma 2.7]{MR1869072}}]
Let $\Lambda$ be a (two-sided) shift space over a finite alphabet $\mathcal{A}$. Let $\left( 1-\Phi \right) :\spa_\Z\{\cha{A} : A \in \mathcal{E}_{\Lambda^*}^{0,-}\}\to \spa_\Z\{\cha{A} : A\in\mathcal{E}_{\Lambda^*}^{0,-} \}$ be the linear map given by
\begin{equation*}
\left( 1-\Phi \right) (\cha{A})=\cha{A}-\sum_{a\in\mathcal{A}}\cha{r(A,a)} \quad A\in\mathcal{E}_{\Lambda^*}^{0,-}.
\end{equation*}
Then $K_1( \mathcal{O}_{\Lambda^*})$ is isomorphic to $\ker( 1 - \Phi )$ and there exists an isomorphism from $K_0(\mathcal{O}_{\Lambda^*})$ to $\coker(1 - \Phi)$ which maps $[ S_u^* S_u ]_0$ to $\cha{r(u)} + \im(1-\Phi)$ for each $u \in \mathcal{A}^* \setminus \{ \epsilon \}$.

If $\Lambda$ satisfies condition (I) of \cite{MR1691469}, then we have similar formulas for $K_0 ( \mathcal{O}_\Lambda )$ and $K_1 ( \mathcal{O}_\Lambda )$.
\end{corollary}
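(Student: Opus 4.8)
The plan is to specialise Theorem \ref{theorem:ktheory} to the regular weakly left-resolving labelled space that presents $\mathcal{O}_{\Lambda^*}$. By \cite[Theorem 6.3]{MR2304922} there is an isomorphism $\phi:\mathcal{O}_{\Lambda^*}\to C^*(E_{\Lambda^*},\mathcal{L}_{\Lambda^*},\mathcal{E}_{\Lambda^*}^{0,-})$ carrying each generator $S_a$ to $s_a$. The predecessor graph $(E_{\Lambda^*},\mathcal{L}_{\Lambda^*})$ is left-resolving, so by Remark \ref{remark:regular} the labelled space $(E_{\Lambda^*},\mathcal{L}_{\Lambda^*},\mathcal{E}_{\Lambda^*}^{0,-})$ is regular weakly left-resolving, and Theorem \ref{theorem:ktheory} applies with $\bool=\mathcal{E}_{\Lambda^*}^{0,-}$.

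Next I would simplify the general formula using the two special features of this setting: the alphabet $\mathcal{A}$ is finite, and the predecessor graph of a two-sided shift space has no sinks (every allowed word extends to the right, so every vertex emits an edge). Finiteness of $\mathcal{A}$ makes $\mathcal{L}(A E^1)$ automatically finite for every $A$, and lets us replace the sum over $\mathcal{L}(A E^1)$ by a sum over all of $\mathcal{A}$, since $\cha{r(A,a)}=0$ whenever $a\notin\mathcal{L}(A E^1)$. The absence of sinks forces $\mathcal{N}=\{\emptyset\}$: if $C\setminus D\in\mathcal{N}$ then $r(C\setminus D,a)=r(C,a)\setminus r(D,a)=\emptyset$ for all $a$ by Lemma \ref{lem:lrprop}, so any vertex of $C\setminus D$ would be a sink, forcing $C\setminus D=\emptyset$. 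Consequently $\ker(\varphi)=\{0\}$, $\ker(\varphi)^\perp=\Aalg$, and hence $\boolj=\hbool$, so that $\Jalg=\Aalg$. Finally $\spa_\Z\{\cha{A}:A\in\hbool\}=\spa_\Z\{\cha{A}:A\in\mathcal{E}_{\Lambda^*}^{0,-}\}$ because $\cha{A\setminus B}=\cha{A}-\cha{B}$, so both the domain and the codomain of $1-\Phi$ coincide with $\spa_\Z\{\cha{A}:A\in\mathcal{E}_{\Lambda^*}^{0,-}\}$ and the map reduces to the one in the statement. Theorem \ref{theorem:ktheory} then yields $K_1(\mathcal{O}_{\Lambda^*})\cong\ker(1-\Phi)$ and $K_0(\mathcal{O}_{\Lambda^*})\cong\coker(1-\Phi)$.

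It remains to identify the images of the generators $[S_u^*S_u]_0$. Using relations (ii) and (iii) of Definition \ref{lgdef} I would show by induction on $|u|$ that $s_u^*s_u=p_{r(u)}$: indeed $s_{\alpha a}^*s_{\alpha a}=s_a^*p_{r(\alpha)}s_a=p_{r(a)}p_{r(r(\alpha),a)}=p_{r(\alpha a)}$, using $r(\alpha a)=r(r(\alpha),a)\subseteq r(a)$. Since $\phi(S_u)=s_u$, this gives $\phi(S_u^*S_u)=p_{r(u)}$ with $r(u)\in\mathcal{E}_{\Lambda^*}^{0,-}$, and composing $\phi_*$ with the $K_0$-isomorphism of Theorem \ref{theorem:ktheory} (which sends $[p_A]_0$ to $\cha{A}+\im(1-\Phi)$) sends $[S_u^*S_u]_0$ to $\cha{r(u)}+\im(1-\Phi)$, as required.

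For the $\mathcal{O}_\Lambda$ assertion I would argue identically, replacing the predecessor graph by the left Krieger cover $(E_\Lambda,\mathcal{L}_\Lambda)$ of \cite{MR0776312}, which is again left-resolving with no sinks; condition (I) of \cite{MR1691469} is precisely what guarantees, via \cite[Theorem 6.3]{MR2304922}, the isomorphism $\mathcal{O}_\Lambda\cong C^*(E_\Lambda,\mathcal{L}_\Lambda,\mathcal{E}_\Lambda^{0,-})$ needed to run the same argument. The main obstacle is the second paragraph, namely establishing $\boolj=\hbool$ (equivalently $\ker(\varphi)=\{0\}$); this rests on the structural fact that the covers of two-sided shift spaces over a finite alphabet have no sinks, without which $\Jalg$ would be a proper ideal and the domain of $1-\Phi$ would fail to simplify to $\spa_\Z\{\cha{A}:A\in\mathcal{E}_{\Lambda^*}^{0,-}\}$.
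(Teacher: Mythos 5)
Your proof is correct and follows essentially the same route as the paper: realise $\mathcal{O}_{\Lambda^*}$ (respectively $\mathcal{O}_\Lambda$, under condition (I)) as the labelled graph algebra $C^*(E_{\Lambda^*},\mathcal{L}_{\Lambda^*},\mathcal{E}_{\Lambda^*}^{0,-})$ via \cite[Theorem 6.3]{MR2304922}, get regularity and weak left-resolving from left-resolving, verify $(\mathcal{E}_{\Lambda^*}^{0,-})^\land=(\mathcal{E}_{\Lambda^*}^{0,-})^\land_J$ and the coincidence of the $\Z$-spans, and then apply Theorem \ref{theorem:ktheory}. Your no-sinks argument showing $\mathcal{N}=\{\emptyset\}$ (hence $\boolj=\hbool$) and your induction giving $s_u^*s_u=p_{r(u)}$ are exactly the details the paper compresses into ``it is straightforward to check'' and ``the result follows''.
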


\begin{proof}
The labelled spaces $( E_{\Lambda} , \mathcal{L}_{\Lambda} , \mathcal{E}_{\Lambda}^{0,-} )$ and $( E_{\Lambda^*} , \mathcal{L}_{\Lambda^*} , \mathcal{E}_{\Lambda^*}^{0,-} )$ are weakly left-resolving and normal. Since $\mathcal{E}_{\Lambda}^{0,-}=(\mathcal{E}_{\Lambda}^{0,-})_J$ and $\mathcal{E}_{\Lambda^*}^{0,-}=(\mathcal{E}_{\Lambda^*}^{0,-} )_J$, the result follows from \cite[Theorem 6.3]{MR2304922} and Theorem \ref{theorem:ktheory}.
\end{proof}

\subsection*{Example 4 -- Cuntz-Pimsner algebras associated with subshifts}

Let $\OSS$ be a one-sided shift space over a finite alphabet $\mathcal{A}$. In \cite{TMC7} a $C^*$-algebra was associated with $\OSS$. An alternative construction of this $C^*$-algebra is given in \cite{CS} and its relationship with the algebras $\mathcal{O}_\Lambda$ and $\mathcal{O}_{\Lambda^*}$ is explored. We will denote this $C^*$-algebra by $C^*( \OSS )$ (it is denoted by $\mathcal{O}_\OSS$ in \cite{TMC7} and by $\mathcal{D}_\OSS\mspace{-2mu} \rtimes_{\alpha , \mathcal{L}} \N$ in \cite{CS}).

We show how one may realise $C^*( \OSS )$ as the $C^*$-algebra of a labelled space. We let $E_\OSS$ be the directed graph $(E_\OSS^0 , E^1_\OSS , r , s)$ where $E_\OSS^0 = \OSS$, $E_\OSS^1 = \{ (x,a,y) \in \OSS \times \mathcal{A} \times \OSS : x=ay\}$ and $r,s : E_\OSS^1\to E_\OSS^0$ are defined by $s(x,a,y) = x$ and $r(x,a,y)=y$. We let $\labe_\OSS : E_\OSS^1 \to \mathcal{A}$ be the labeling given by $\labe_\OSS (x,a,y)=a$.

For $u,v \in \labe_\OSS^\#(E_\OSS)$, let $C(u,v) = \{ vx \in \OSS : ux \in \OSS \}$. Let $\bool_\OSS$ be the Boolean algebra generated by $\{ C(u,v) : u,v \in \labe_\OSS^\#(E_\OSS) \}$. That is, $\bool_\OSS$ is the smallest subset of $2^{\OSS}$ which is closed under finite intersections, finite unions, relative complements and which contains $C(u,v)$ for every $u,v \in \labe_\OSS^\#(E_\OSS)$.

\begin{lemma} \label{lem:exisnice}
With the above definitions $(E_\OSS , \labe_\OSS , \bool_\OSS )$ is a normal weakly left-resolving labelled space.
\end{lemma}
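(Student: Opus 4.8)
The plan is to verify separately the two defining ingredients of a regular weakly left-resolving labelled space, and then to harvest the rest from results already in the paper. Concretely, I would first show that the underlying labelled graph $(E_\OSS,\labe_\OSS)$ is left-resolving, and then that $\bool_\OSS$ is accommodating for it. Once both are established, no further work is needed: a left-resolving labelled graph yields a weakly left-resolving labelled space for \emph{any} accommodating $\bool$ (by the remark following the definition of weak left-resolvingness), and it is automatically regular by Remark~\ref{remark:regular}. So the entire content reduces to these two checks.

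The left-resolving check is routine. Fix a vertex $y\in E_\OSS^0=\OSS$. An edge ranging at $y$ must have the form $(x,a,y)$ with $x=ay\in\OSS$, so $r^{-1}(y)=\{(ay,a,y): a\in\mathcal{A},\ ay\in\OSS\}$ and $\labe_\OSS(ay,a,y)=a$. Two such edges carrying the same label $a$ are both equal to $(ay,a,y)$, hence coincide; thus $\labe_\OSS\colon r^{-1}(y)\to\mathcal{A}$ is injective.

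The substance is showing $\bool_\OSS$ is accommodating. Closure under finite intersections and unions is immediate since $\bool_\OSS$ is by definition a Boolean algebra, so two things remain: that $\bool_\OSS$ contains $r(\alpha)$ for every $\alpha\in\labe_\OSS^\#(E_\OSS)$, and that it is closed under relative ranges. For the first, a direct unwinding of the definitions shows that a path labelled $\alpha$ ranges at $y$ precisely when $\alpha y\in\OSS$, so $r(\alpha)=\{y\in\OSS:\alpha y\in\OSS\}=C(\alpha,\epsilon)$, a generator. For relative-range closure the key simplification is that, for fixed $\alpha$, the relative range is a \emph{preimage}: one checks $r(A,\alpha)=\{y\in\OSS:\alpha y\in A\}$, and the assignment $A\mapsto\{y:\alpha y\in A\}$ commutes with finite unions, finite intersections and relative complements. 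Consequently $\{A\in\bool_\OSS: r(A,\alpha)\in\bool_\OSS\}$ is a subalgebra of $\bool_\OSS$, so it suffices to verify $r(C(u,v),\alpha)\in\bool_\OSS$ on the generators. This reduces to a prefix case-analysis: if $v$ is a prefix of $\alpha$, say $\alpha=v\alpha'$, one obtains $r(C(u,v),\alpha)=C(u\alpha',\epsilon)\cap C(\alpha,\epsilon)$; if $\alpha$ is a prefix of $v$, say $v=\alpha v'$, one obtains $C(u,v')\cap C(\alpha,\epsilon)$; and otherwise the set is empty. In every case the answer lies in $\bool_\OSS$.

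I expect this prefix case-analysis to be the main obstacle, essentially for bookkeeping reasons: one must track the implicit membership constraint $\alpha y\in\OSS$ (which is exactly why the extra factor $C(\alpha,\epsilon)=r(\alpha)$ appears in each formula) and invoke shift-invariance of $\OSS$ to guarantee that the shifted tails occurring in the computation again belong to $\OSS$. It is enough to carry out the analysis for a single letter $a\in\mathcal{A}$ and then iterate, since $r(\,\cdot\,,\alpha\beta)=r(r(\,\cdot\,,\alpha),\beta)$, though handling a general word $\alpha$ directly is no harder. With relative-range closure in hand, $\bool_\OSS$ is accommodating, and combined with the left-resolving property this yields that $(E_\OSS,\labe_\OSS,\bool_\OSS)$ is regular and weakly left-resolving.
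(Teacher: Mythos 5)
Your proposal is correct and follows essentially the same route as the paper: identify $r(\alpha)=C(\alpha,\epsilon)$, verify closure under relative ranges by an explicit prefix case-analysis on the generators $C(u,v)$ (your formulas agree with the paper's, which treats single letters $a$ and the case $v=\epsilon$ separately), observe that $(E_\OSS,\labe_\OSS)$ is left-resolving, and deduce weak left-resolvingness and regularity from that. The only differences are cosmetic: you check left-resolvingness first rather than last, you treat general words $\alpha$ directly rather than single letters, and you spell out the preimage/subalgebra argument for why it suffices to check the generators, a reduction the paper simply asserts.
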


\begin{proof}
We will first prove that $\bool_\OSS$ is accommodating for $(E_\OSS , \labe_\OSS )$. Let $\alpha \in \labe_\OSS^* ( E_\OSS )$. Then we have that  $r ( \alpha ) = C(\alpha,\epsilon)$, from which it follows that $\bool_\OSS$ contains $r ( \alpha )$. It remains to show that $\bool_\OSS$ is closed under relative ranges.

To check that $\bool_\OSS$ is closed under relative ranges it suffices to check that $r ( C(u,v) , a ) \in \bool_\OSS$ for all $u , v \in \labe_\OSS^\#(E_\OSS )$ and $a \in \mathcal{A}$. If $u,v \in \labe_\OSS^\#(E_\OSS)$ are such that $v = v_1 v_2 \dots v_n \ne \epsilon$, then we claim that
\begin{equation} \label{eq:relrange}
r ( C(u,v) , a) = \begin{cases} C ( a , \epsilon ) \cap C ( u , v_2 v_3 \dots v_n) & \text{ if } a=v_1 \\
\emptyset & \text{ otherwise.}
\end{cases}
\end{equation}
To see this observe that every vertex in $C(u,v)$ emits exactly one edge, labelled $v_1$, and so $r ( C(u,v) , a ) =\emptyset$ if $v_1\ne a$. If $v_1=a$ and $x\in C(a,\emptyword)\cap C(u,v_2v_3\dots v_n)$, then $ax\in C(u,v)$ and so $x\in r ( C(u,v) , a )$. If $v_1=a$ and $x\in r ( C(u,v) , a )$, then $ax\in C(u,v)$ and so $x\in C(a,\emptyword)\cap C(u,v_2v_3\dots v_n)$. Thus Equation \eqref{eq:relrange} holds. If $u\in \labe_\OSS^\#(E_\OSS)$ and $a \in \mathcal{A}$, then it is easy to check that $r(C(u,\emptyword),a)=C(ua,\emptyword)$. It follows that $\bool_\OSS$ is accommodating for $(E_\OSS,\labe_\OSS)$. Moreover, $\bool_\OSS$ is non-degenerate by definition.  

Since a vertex $x \in E_\OSS^0$ can only receive an edge labelled $a$ from the vertex $ax$ we see that $(E_\OSS,\labe_\OSS)$ is left-resolving. Thus $(E_\OSS , \labe_\OSS , \bool_\OSS )$ is weakly left-resolving and normal.
\end{proof}

As in \cite{TMC7}, we let $\DX$ denote the $C^*$-algebra generated by $\{ \cha{C( u , v )} : u,v \in \labe_\OSS^*(E_\OSS) \}$. We then have that $\DX = \spc \{ \cha{A} :  A \in \bool_\OSS \} = \Aalg[E_\OSS, \labe_\OSS , \bool_\OSS]$.

According to \cite[Remark 7.3]{TMC7}, $C^* ( \OSS )$ is the universal $C^*$-algebra generated by a family $\{ S_a :  a\in\mathcal{A} \}$ of partial isometries such that there exists a $*$-homomorphism from $\iota_\OSS : \DX \to C^*( \OSS )$ given by
\begin{equation} \label{eq:iotadef}
\iota_\OSS : \cha{C(u,v)} \mapsto ( S_{v_1} S_{v_2} \dots  S_{v_m} )( S_{u_1} S_{u_2} \dots S_{u_n})^*( S_{u_1} S_{u_2} \dots S_{u_n} )( S_{v_1} S_{v_2} \dots S_{v_m} )^*
\end{equation}
where $u = u_1 u_2\dots u_n$ and $v= v_1 v_2 \dots v_m \in \labe_\OSS^\#(E_\OSS)$. Note that if $u$ (respectively $v$) is the empty word, then the product $(S_{u_1} S_{u_2} \dots S_{u_n})$ (respectively $( S_{v_1} S_{v_2}\dots S_{v_m} )$) is the identity of $C^*( \OSS )$. If $u = u_1 u_2\dots u_n \in \labe_\OSS^*(E_\OSS) $, then we will denote the product $S_{u_1} S_{u_2} \dots S_{u_n}$ by $S_u$. If $u = \epsilon$, then we let $S_u$ denote the unit of $C^*(\OSS)$.

\begin{lemma} \label{lemma:conjsa}
Let $\iota_\OSS : \DX \to C^* ( \OSS )$ be defined in \eqref{eq:iotadef} above. Then for every $x \in \DX $ and $a \in \mathcal{A}$ we have $\iota_\OSS( \phi_a (x) ) = S_a^* \iota_\OSS( x) S_a$ where $\phi_a : \DX \to \DX$ is given by $\phi_a ( \chi_{A} ) = \chi_{r(A,a)}$ for $A \in \bool_\OSS$ (see Lemma \ref{lemma:phia}).
\end{lemma}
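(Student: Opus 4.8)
The plan is to exhibit both sides of the asserted identity as $*$-homomorphisms that agree on a generating set. Put $L := \iota_\OSS \circ \phi_a$ and $\Psi_a : x \mapsto S_a^*\iota_\OSS(x) S_a$, both maps from $\DX$ to $C^*(\OSS)$. Since $\DX = \spc\{\cha{A} : A \in \bool_\OSS\}$ is generated as a $C^*$-algebra by the projections $\cha{C(u,v)}$ with $u,v \in \labe_\OSS^\#(E_\OSS)$, it will suffice to prove that $L$ and $\Psi_a$ are $*$-homomorphisms agreeing on each $\cha{C(u,v)}$. That $L$ is a $*$-homomorphism is immediate: it is the composite of the $*$-homomorphism $\phi_a$ of Lemma \ref{lemma:phia} with $\iota_\OSS$.

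The step I expect to require the most care is showing that $\Psi_a$ is multiplicative, since conjugation by a general partial isometry need not be. First I would read off from \eqref{eq:iotadef} that $\iota_\OSS(\cha{C(\emptyword,a)}) = S_a S_a^*$ and $\iota_\OSS(\cha{C(a,\emptyword)}) = S_a^* S_a$, so that the range projection $p := S_a S_a^*$ lies in the commutative algebra $\iota_\OSS(\DX)$ and hence commutes with every $\iota_\OSS(y)$. Then for $x,y \in \DX$,
\[ \Psi_a(x)\Psi_a(y) = S_a^*\iota_\OSS(x)\,(S_aS_a^*)\,\iota_\OSS(y)\,S_a = S_a^*\iota_\OSS(x)\iota_\OSS(y)\,(S_aS_a^*)\,S_a = S_a^*\iota_\OSS(xy)\,S_a = \Psi_a(xy), \]
using that $p$ commutes with $\iota_\OSS(y)$ and that $pS_a = S_aS_a^*S_a = S_a$. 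As $\Psi_a$ is visibly linear and $*$-preserving, this makes it a $*$-homomorphism.

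It then remains to verify $L(\cha{C(u,v)}) = \Psi_a(\cha{C(u,v)})$ for all $u,v$, for which I would first record the orthogonality $S_a^*S_b = 0$ when $a \ne b$: the sets $C(\emptyword,a)$ and $C(\emptyword,b)$ are disjoint (distinct first letters), so the range projections $S_aS_a^* = \iota_\OSS(\cha{C(\emptyword,a)})$ and $S_bS_b^* = \iota_\OSS(\cha{C(\emptyword,b)})$ are orthogonal, whence $S_a^*S_b = S_a^*(S_aS_a^*)(S_bS_b^*)S_b = 0$. For $v = \emptyword$ I would use $r(C(u,\emptyword),a) = C(ua,\emptyword)$ from the proof of Lemma \ref{lem:exisnice} to get $L(\cha{C(u,\emptyword)}) = \iota_\OSS(\cha{C(ua,\emptyword)}) = S_{ua}^*S_{ua} = S_a^*S_u^*S_uS_a = \Psi_a(\cha{C(u,\emptyword)})$. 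For $v = v_1\dots v_n \ne \emptyword$ I would invoke \eqref{eq:relrange}: when $a \ne v_1$ both sides vanish, the left by \eqref{eq:relrange} and the right because $\iota_\OSS(\cha{C(u,v)})$ begins with $S_{v_1}$ and $S_a^*S_{v_1} = 0$; when $a = v_1$, writing $v = aw$ with $w = v_2\dots v_n$ gives $\iota_\OSS(\cha{C(u,v)}) = S_a\,\iota_\OSS(\cha{C(u,w)})\,S_a^*$, so that
\[ \Psi_a(\cha{C(u,v)}) = (S_a^*S_a)\,\iota_\OSS(\cha{C(u,w)})\,(S_a^*S_a) = \iota_\OSS\bigl(\cha{C(a,\emptyword)}\cha{C(u,w)}\cha{C(a,\emptyword)}\bigr) = \iota_\OSS(\cha{C(a,\emptyword)\cap C(u,w)}), \]
which equals $L(\cha{C(u,v)}) = \iota_\OSS(\cha{r(C(u,v),a)})$ by \eqref{eq:relrange}. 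Since $L$ and $\Psi_a$ are $*$-homomorphisms agreeing on a set generating $\DX$, they coincide, which is exactly the claim $\iota_\OSS(\phi_a(x)) = S_a^*\iota_\OSS(x)S_a$.
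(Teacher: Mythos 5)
Your proposal is correct and follows essentially the same route as the paper: both arguments verify the identity on the generators $\cha{C(u,v)}$ using \eqref{eq:iotadef} and \eqref{eq:relrange} (splitting into the cases $v=\emptyword$, $a\ne v_1$, and $a=v_1$) and then pass to all of $\DX$ by generation. The only difference is that you make explicit a step the paper leaves implicit, namely that $x \mapsto S_a^*\iota_\OSS(x)S_a$ is multiplicative because $S_aS_a^* = \iota_\OSS(\cha{C(\emptyword,a)})$ lies in the commutative algebra $\iota_\OSS(\DX)$ and hence commutes with it --- a worthwhile clarification, since mere agreement of two linear maps on a generating (rather than spanning) set would not by itself suffice.
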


\begin{proof}
Let $u,v \in \labe_\OSS^\#(E_\OSS)$ with $v=v_1 v_2 \dots v_m \ne \epsilon$. If $a=v_1$ then
\begin{equation*}
\begin{split}
S_a^*\iota_\OSS(\cha{C(u,v)}) S_a & = S_a^* S_v S_u^* S_u S_v^* S_a
= S_a^* S_a S_{v_2} S_{v_3} \dots S_{v_m} S_u^* S_u ( S_{v_2} S_{v_3} \dots S_{v_m})^* \\
&= \iota_\OSS(\cha{C(a,\epsilon)}\cha{C(u,v_2v_3\dots v_m)}),
\end{split}
\end{equation*}
otherwise it is zero. Thus $S_a^*\iota_\OSS(\cha{C(u,v)}) S_a = \iota_\OSS(\cha{r(C(u,v),a)}) = \iota_\OSS(\phi_a ( \cha{C(u,v)}) )$ by Equation \eqref{eq:relrange}. One can similarly show that if $u \in \labe_\OSS^\#(E_\OSS)$, then $S_a^* \iota_\OSS(\cha{C(u,\epsilon)}) S_a = \iota_\OSS(\phi_a ( \cha{C(u,\epsilon)}) )$. Since $\DX$ is generated by $\{\cha{C(u,v)} : u,v \in \labe_\OSS^\#(E_\OSS) \}$, it follows that $\iota_\OSS( \phi_a (x) ) = S_a^* \iota_\OSS(x) S_a$ for every $x \in \DX$.
\end{proof}

\begin{proposition} \label{proposition:oss}
Let $\OSS$ be a one-sided shift space over a finite alphabet $\mathcal{A}$. Let $\{p_A, s_a : A\in \bool_\OSS, a\in\mathcal{A} \}$ and $\{S_a : a\in\mathcal{A} \}$ be the canonical generators of $C^*( E_\OSS , \labe_\OSS , \bool_\OSS )$ and $C^*( \OSS )$, respectively. Then the map $S_a \mapsto s_a$ for $a\in\mathcal{A}$ induces an isomorphism from $C^*( \OSS )$ to $C^*( E_\OSS , \labe_\OSS , \bool_\OSS )$.
\end{proposition}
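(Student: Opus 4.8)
The plan is to exhibit mutually inverse $*$-homomorphisms $\rho : C^*(\OSS)\to C^*(E_\OSS,\labe_\OSS,\bool_\OSS)$ and $\sigma$ in the opposite direction, each obtained from a universal property. The two computations that drive everything are the specialisations of \eqref{eq:iotadef}, namely $S_a^*S_a=\iota_\OSS(\cha{C(a,\emptyword)})=\iota_\OSS(\cha{r(a)})$ and $S_aS_a^*=\iota_\OSS(\cha{C(a,a)})$ (the latter because $S_a(S_a^*S_a)S_a^*=S_aS_a^*$), together with their labelled-graph counterparts $s_a^*s_a=p_{r(a)}$ and $s_as_a^*=p_{C(a,a)}$; the fact that $\iota_\OSS(\DX)$ is abelian; and the observation that the sets $C(a,a)=\{z\in\OSS : z_1=a\}$ are pairwise disjoint and cover any $A\in\bool_\OSS$ through $A\subseteq\bigcup_{a\in\labe_\OSS(AE^1)}C(a,a)$.

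First I would build $\rho$. Since $C^*(\OSS)$ is universal for families of partial isometries admitting a $*$-homomorphism of the form \eqref{eq:iotadef} (see \cite[Remark 7.3]{TMC7}), and since $\DX=\Aalg[E_\OSS,\labe_\OSS,\bool_\OSS]$, it suffices to show that the injective $*$-homomorphism $\iota_{\Aalg}$ of Corollary \ref{corol:iota} satisfies \eqref{eq:iotadef} with each $S$ replaced by the corresponding $s$; equivalently, that $p_{C(u,v)}=s_vs_u^*s_us_v^*$. Using $s_u^*s_u=p_{r(u)}=p_{C(u,\emptyword)}$, this reduces to proving $s_vp_Bs_v^*=p_C$ with $C=\{vy : y\in B\}\cap\OSS$, which I would establish by induction on $|v|$. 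The single-letter step rests on relation (iv) of Definition \ref{lgdef}, available because $E_\OSS$ has no sinks and $\mathcal{A}$ is finite: it yields $s_as_a^*=p_{C(a,a)}$, after which relation (ii) gives $p_{\{ay:y\in B\}\cap\OSS}\,s_a=s_ap_B$, and combining with $p_{\{ay:y\in B\}\cap\OSS}\le p_{C(a,a)}=s_as_a^*$ produces $s_ap_Bs_a^*=p_{\{ay:y\in B\}\cap\OSS}$; the set identity $\{ay:y\in C(u,w)\}\cap\OSS=C(u,aw)$ from Lemma \ref{lem:exisnice} keeps everything inside $\bool_\OSS$ and identifies $C$ with $C(u,v)$. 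This gives $\rho(S_a)=s_a$ and $\rho\circ\iota_\OSS=\iota_{\Aalg}$.

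Next I would build $\sigma$ by verifying that $\{S_a,\ \iota_\OSS(\cha A) : a\in\mathcal{A},\ A\in\bool_\OSS\}$ is a representation of $(E_\OSS,\labe_\OSS,\bool_\OSS)$ and invoking universality of $C^*(E_\OSS,\labe_\OSS,\bool_\OSS)$. Relation (i) is immediate as $\iota_\OSS$ is a $*$-homomorphism and the characteristic functions satisfy the corresponding identities. For (iii), the specialisation above gives $S_a^*S_a=\iota_\OSS(\cha{r(a)})$, while $S_a^*S_b=0$ for $a\ne b$ follows by writing $S_a=\iota_\OSS(\cha{C(a,a)})S_a$, $S_b=\iota_\OSS(\cha{C(b,b)})S_b$ and using $C(a,a)\cap C(b,b)=\emptyset$. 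The substantive relations are (ii) and (iv), and here I would feed Lemma \ref{lemma:conjsa}, which reads $\iota_\OSS(\cha{r(A,a)})=S_a^*\iota_\OSS(\cha A)S_a$, into the two identities $S_aS_a^*=\iota_\OSS(\cha{C(a,a)})$ and the commutativity of $\iota_\OSS(\DX)$. Concretely, $\iota_\OSS(\cha A)S_a=\iota_\OSS(\cha A)S_aS_a^*S_a=S_aS_a^*\iota_\OSS(\cha A)S_a=S_a\iota_\OSS(\cha{r(A,a)})$ gives (ii); and summing $S_a\iota_\OSS(\cha{r(A,a)})S_a^*=\iota_\OSS(\cha{A\cap C(a,a)})$ over the finitely many $a\in\labe_\OSS(AE^1)$, using disjointness of the $C(a,a)$ and $A\subseteq\bigcup_aC(a,a)$, yields $\iota_\OSS(\cha A)$, which is (iv).

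Finally I would check that $\rho$ and $\sigma$ are mutually inverse. By construction $\rho(S_a)=s_a$, $\sigma(s_a)=S_a$, $\sigma(p_A)=\iota_\OSS(\cha A)$ and $\rho\circ\iota_\OSS=\iota_{\Aalg}$, so $\rho\circ\sigma$ and $\sigma\circ\rho$ fix the canonical generators and are therefore the respective identity maps. I expect the main obstacle to be the verification of (ii) and (iv) for $\{S_a,\iota_\OSS(\cha A)\}$: Lemma \ref{lemma:conjsa} only controls the conjugation $S_a^*(\cdot)S_a$, and upgrading this to the two-sided relations is precisely where one must exploit that the range projection $S_aS_a^*=\iota_\OSS(\cha{C(a,a)})$ sits inside the commutative algebra $\iota_\OSS(\DX)$ and that these projections are orthogonal and cover each $A$.
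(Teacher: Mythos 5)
Your proposal is correct and is essentially the paper's own proof: one homomorphism comes from the universal property of $C^*(E_\OSS,\labe_\OSS,\bool_\OSS)$ after checking that $\{S_a,\ \iota_\OSS(\cha{A})\}$ is a representation of $(E_\OSS,\labe_\OSS,\bool_\OSS)$ --- with Lemma \ref{lemma:conjsa} and the commutativity of $\iota_\OSS(\DX)$ handling conditions (ii) and (iv) exactly as you describe --- and the other from the universal property of $C^*(\OSS)$ after verifying $p_{C(u,v)}=s_vs_u^*s_us_v^*$, which the paper establishes by iterating Definition \ref{lgdef}(iv) together with Equation \eqref{eq:relrange} (your induction on $|v|$ is the same computation, and your $C(a,a)$ is the paper's $C(\epsilon,a)$). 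The only cosmetic deviations are that the paper obtains the map $\cha{A}\mapsto p_A$ from Lemma \ref{lemma:map} rather than Corollary \ref{corol:iota}, and that your intermediate step $p_{\{ay:y\in B\}\cap\OSS}\,s_a=s_ap_B$ needs, besides relation (ii), the one-line observation that $s_ap_B=s_ap_{r(a)\cap B}$ and $r(a)\cap B=r(\{ay:y\in B\}\cap\OSS,\ a)$.
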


\begin{proof}
Let $\iota_\OSS$ denote (the unique) $*$-homomorphism from $\DX$ to $C^*( \OSS )$ mapping $\cha{C(u,v)}$ to
\begin{equation*}
( S_{v_1} S_{v_2} \dots  S_{v_m} )( S_{u_1} S_{u_2} \dots S_{u_n})^*( S_{u_1} S_{u_2} \dots S_{u_n} )( S_{v_1} S_{v_2} \dots S_{v_m} )^*
\end{equation*}
for $u,v\in \labe_\OSS^\#(E_\OSS)$. We will show that $\{\iota_\OSS(\cha{A}),\ S_a : A \in \bool_\OSS,\ a\in\mathcal{A} \}$ is a representation of $( E_\OSS , \labe_\OSS , \bool_\OSS )$.
First, one checks that $\{\iota_\OSS(\cha{A}): \, A\in\bool_\OSS\}$ satisfies condition (i) of Definition \ref{lgdef}. Second, if $a \in \mathcal{A}$ and $A \in \bool_\OSS$, then it follows from Lemma \ref{lemma:conjsa} that
\begin{equation*}
\iota_\OSS (\cha{A}) S_a  = \iota_\OSS ( \cha{A} ) S_a S_a^* S_a
= S_a S_a^* \iota_\OSS (\cha{A}) S_a
= S_a \iota_\OSS ( \phi_a ( \cha{A }) )  = S_a \iota_\OSS ( \cha{r(A,a)} )
\end{equation*}
which shows that $\{\iota_\OSS(\cha{A}),\ S_a : A\in \bool_\OSS,\ a\in\mathcal{A} \}$ satisfies condition (ii) of Definition \ref{lgdef}. If $a \in \mathcal{A}$, then $S_a^* S_a  = \iota_\OSS (\cha{C(a,\epsilon)})  = \iota_\OSS (\cha{r(a)})$. If $a, b \in \mathcal{A}$ and $a \ne b$, then $S_a^*S_b= S_a^* \iota_\OSS( \cha{C(\epsilon,a)} \cha{C ( \epsilon, b )} )S_b = 0$. Thus $\{\iota_\OSS(\cha{A}),\ S_a : A\in \bool_\OSS,\ a\in\mathcal{A} \}$ satisfies condition (iii) of Definition \ref{lgdef}. We have that $\bigcup_{a\in\mathcal{A}}C( \epsilon,a ) = E_\OSS^0$ and $C(\epsilon,a)\cap C(\epsilon,b)=\emptyset$ for $a ,b \in \mathcal{A}$ with $a \ne b$ and so $\cha{E^0_\OSS} = \sum_{a \in \labe_\OSS ( E_{\OSS}^1 ) } \cha{C(\epsilon,a)}$ in $\DX$. Since $\iota_\OSS ( \cha{E^0_\OSS} ) = 1_{C^* ( \OSS )}$ it follows from Lemma \ref{lemma:conjsa} that if $A\in\bool_\OSS$, then
\begin{equation*}
\begin{split}
\iota_\OSS(\cha{A} )
&= \sum_{a \in \labe_\OSS ( E_{\OSS}^1 ) } \iota_\OSS (\cha{C(\epsilon,a)} \cha{A} \cha{ C(\epsilon,a)} )
= \sum_{a \in \labe_\OSS ( AE_{\OSS}^1 ) } S_a S_a^* \iota_\OSS( \cha{A}) S_a S_a^*  \\
&= \sum_{a \in \labe_\OSS ( AE_{\OSS}^1 ) } S_a \iota_\OSS ( \phi_a ( \cha{A} ) )  S_a^*
= \sum_{a \in \labe_\OSS ( AE_{\OSS}^1 ) } S_a  \iota_\OSS ( \cha{r(A,a)} )  S_a^*
\end{split}
\end{equation*}
which shows that $\{\iota_\OSS(\cha{A}),\ S_a : A\in \bool_\OSS,\ a\in\mathcal{A} \}$ satisfies condition (iv) of Definition \ref{lgdef}. Hence $\{\iota_\OSS(\cha{A}),\ S_a : A\in \bool_\OSS,\ a\in\mathcal{A} \}$ is a representation of $( E_\OSS , \labe_\OSS , \bool_\OSS  )$. It follows from the universal property of $C^* ( E_\OSS , \labe_\OSS , \bool_\OSS  )$ that there is a $*$-homomorphism $\pi_{s,p} : C^* ( E_\OSS , \labe_\OSS , \bool_\OSS  ) \to C^* ( \OSS )$ which sends $p_A$ to $\cha{A}$ and $s_a$ to $S_a$ for all $A \in \bool_\OSS$ and $a \in \mathcal{A}$.

It follows from Lemma \ref{lemma:map} that there exists a $*$-homomorphism from $\DX = \Aalg[E_\OSS,\labe_\OSS,\bool_\OSS]$ to $C^* ( E_\OSS , \labe_\OSS , \bool_\OSS  )$ which maps $\cha{A}$ to $p_A$ for every $A \in \bool_\OSS$. In particular, if $u,v \in \labe_\OSS^\#( E_\OSS )$, then $\phi( \cha{C(u,v)} ) = p_{C(u,v)}$. Repeated applications of conditions (iii) and (iv) of Definition \ref{lgdef} show that if $u \in \labe_\OSS^\#( E_\OSS )$  then $s_u^* s_u = p_{r(u)} = p_{C ( u , \epsilon)}$ (if $u=\emptyword$, then $r(u)=C(u,\emptyword)=\OSS$). For $v = v_1 v_2 \dots v_m \in \labe_\OSS^*( E_\OSS )$, condition (iv) of Definition \ref{lgdef} implies that
\begin{equation*}
\begin{split}
p_{C(u,v)} & = \!\! \sum_{a \in \mathcal{L}_{\OSS} ( C(u,v) E_{\OSS}^1 )} \!\!\!\!\! s_a p_{r(C(u,v),a)} s_a^*
= s_{v_1} p_{C ( u , v_2 v_3\dots v_m)}s_{v_1}^* \text{ by Equation \eqref{eq:relrange}} \\
&= s_{v_1} \left( \sum_{a \in \mathcal{L}_{\OSS} ( C(u, v_2 v_3 \dots v_m) E_{\OSS}^1 )} \!\!\!\!\! s_a p_{r(C(u , v_2 v_3 \dots v_m),a)} s_a^* \right) s_{v_1}^*
= s_{v_1} s_{v_2} p_{C(u,v_3\dots v_m)} s_{v_2}^* s_{v_1}^* \\
&= \dots = s_{v_1} s_{v_2} \dots s_{v_m} p_{C(u,\epsilon)} s_{v_m}^* \dots s_{v_2}^* s_{v_1}^* = s_v s_u^* s_u s_v^*.
\end{split}
\end{equation*}
Thus, it follows from the universal property of $C^*( \OSS )$ that there is a $*$-homomorphism $\pi_{S,P} : C^*( \OSS ) \to C^* ( E_\OSS , \labe_\OSS , \bool_\OSS  )$ which sends  $S_a$ to $s_a$ and $\iota_\OSS(\cha{A})$ to $p_A$ for all $a \in \mathcal{A}$  and  $A \in \bool_\OSS$.

Since $\pi_{s,p}$ and $\pi_{S,P}$ are evidently inverses, the result follows.
\end{proof}

\begin{corollary}[{cf. \cite[Theorem 1.1]{CS2}}] \label{cor:k-oss}
Let $\OSS$ be a one-sided shift space over a finite alphabet $\mathcal{A}$. Let $(1 - \Phi ) : \spa_\Z \{ \cha{A} : A \in \bool_\OSS \} \to \spa_\Z \{ \cha{A}: A \in \bool_\OSS \}$ be the linear map given by
\begin{equation*}
( 1 - \Phi ) ( \cha{A} ) = \cha{A} - \sum_{a \in \mathcal{L}_\OSS ( A E_\OSS^1 )} \cha{r(A,a)}, \quad A\in\bool_\OSS.
\end{equation*}
Then $K_1(C^*( \OSS ) )$ is isomorphic to $\ker( 1 - \Phi  )$ and there exists an isomorphism from $K_0 ( C^*( \OSS ) )$ to $\coker( 1 - \Phi )$ which maps $[ S_u^* S_u ]_0$ to $\cha{r(u)} + \im(1 - \Phi )$ for each $u \in \labe_\OSS^*( E_\OSS )$.
\end{corollary}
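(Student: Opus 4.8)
The plan is to deduce this directly from Theorem \ref{theorem:ktheory} by transporting everything along the isomorphism of Proposition \ref{proposition:oss}. By Lemma \ref{lem:exisnice} the labelled space $(E_\OSS,\labe_\OSS,\bool_\OSS)$ is regular and weakly left-resolving, so Theorem \ref{theorem:ktheory} applies to $C^*(E_\OSS,\labe_\OSS,\bool_\OSS)$; and by Proposition \ref{proposition:oss} this algebra is isomorphic to $C^*(\OSS)$ via $S_a\mapsto s_a$. It therefore remains to match the index sets appearing in Theorem \ref{theorem:ktheory} with $\bool_\OSS$, and to follow $[S_u^*S_u]_0$ through the isomorphism.

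First I would observe that, since $\bool_\OSS$ is by construction a Boolean algebra, it is closed under relative complements, so $(\bool_\OSS)^\land=\bool_\OSS$ and hence $\spa_\Z\{\cha{A}:A\in(\bool_\OSS)^\land\}=\spa_\Z\{\cha{A}:A\in\bool_\OSS\}$, which is the codomain of $1-\Phi$ in the corollary. Next I would show that $(\bool_\OSS)^\land_J=\bool_\OSS$ as well, so that the domain of $1-\Phi$ coincides with its codomain. Two points are needed. Since $\mathcal{A}$ is finite, $\labe_\OSS(AE_\OSS^1)\subseteq\mathcal{A}$ is finite for every $A$, so the finiteness condition in the definition of $\boolj$ is automatic. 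For the remaining condition I would argue that the set $\mathcal{N}$ is trivial: every vertex $x\in\OSS=E_\OSS^0$ emits exactly one edge, labelled by its first letter (namely $(x,x_0,\sigma(x))$), so $E_\OSS$ has no sinks; and since $(E_\OSS,\labe_\OSS)$ is left-resolving, Lemma \ref{lem:lrprop} gives $r(C\setminus D,a)=r(C,a)\setminus r(D,a)$, so any $C\setminus D$ with $r(C,a)=r(D,a)$ for all $a$ would consist entirely of vertices emitting no edge, forcing $C\setminus D=\emptyset$. Hence $\mathcal{N}=\{\emptyset\}$, the requirement $A\cap B=\emptyset$ for all $B\in\mathcal{N}$ is vacuous, and $(\bool_\OSS)^\land_J=(\bool_\OSS)^\land=\bool_\OSS$. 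With these identifications the map $1-\Phi$ of Theorem \ref{theorem:ktheory} is precisely the map in the corollary.

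Finally I would invoke Theorem \ref{theorem:ktheory} to obtain $K_1(C^*(E_\OSS,\labe_\OSS,\bool_\OSS))\cong\ker(1-\Phi)$ together with an isomorphism $K_0(C^*(E_\OSS,\labe_\OSS,\bool_\OSS))\cong\coker(1-\Phi)$ sending $[p_A]_0$ to $\cha{A}+\im(1-\Phi)$, and transport these along the isomorphism $C^*(\OSS)\cong C^*(E_\OSS,\labe_\OSS,\bool_\OSS)$. To compute the image of $[S_u^*S_u]_0$ I would reuse the computation in the proof of Proposition \ref{proposition:oss}, namely that $S_u^*S_u$ corresponds to $s_u^*s_u=p_{r(u)}=p_{C(u,\epsilon)}$; applying the $K_0$-isomorphism then sends $[S_u^*S_u]_0$ to $[p_{r(u)}]_0$, and thence to $\cha{r(u)}+\im(1-\Phi)$, as claimed. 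The only step requiring genuine care is the identification $(\bool_\OSS)^\land_J=\bool_\OSS$, that is, checking that $\mathcal{N}$ is trivial; everything else is bookkeeping through the two isomorphisms.
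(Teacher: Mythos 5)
Your proposal is correct and follows essentially the same route as the paper: the paper's own proof is exactly the observation that $(\bool_\OSS)_J=\bool_\OSS$ followed by an appeal to Theorem \ref{theorem:ktheory} and Proposition \ref{proposition:oss}. Your verification that $\mathcal{N}=\{\emptyset\}$ (via the absence of sinks, left-resolvingness and Lemma \ref{lem:lrprop}) and that $\bool_\OSS$ is closed under relative complements simply fills in the details the paper leaves implicit in that one-line identification.
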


\begin{proof}
Notice that $(\bool_\OSS)_J=\bool_\OSS$. The result now follows from Theorem \ref{theorem:ktheory} and Proposition \ref{proposition:oss}.
\end{proof}

\section{Computing K-Theory for $C^* (E , \mathcal{L} , \mathcal{E}^{0,-})$} \label{sec:howdismantleanatombomb}

Recall from \cite{bp3} that a labelled space $( E , \mathcal{L}  , \mathcal{B}  )$ is \emph{set-finite} if $\labe(A E^1)$ is finite for all $A \in {\mathcal B}$, and \emph{receiver set-finite} if for all $A \in \mathcal{B} $ and all $\ell \ge 1$ the set $\{ \mathcal{L} (\lambda) : \lambda \in E^\ell ,\ r ( \lambda ) \in A \}$ is finite.

In this section we give a practical method for computing the $K$-theory of labelled graph algebras of the form $C^* ( E , \mathcal{L} , \mathcal{E}^{0,-} )$ for left-resolving labelled graphs $(E,\mathcal{L})$ with no sources and sinks such that $(E,\mathcal{L},\mathcal{E}^{0,-})$ is a set-finite and receiver set-finite labelled space. We use Theorem \ref{theorem:ktheory} to give a description of the $K$-theory of $C^*(E,\mathcal{L},\mathcal{E}^{0,-})$ as an inductive limit in analogy with the computations of \cite{MR1911208}.

\subsubsection*{\bf Standing Assumption:}

Throughout this section $(E,\mathcal{L})$ shall be a left-resolving  labelled graph with no sources and sinks such that $(E, \mathcal{L},\mathcal{E}^{0,-})$ is a set-finite and receiver set-finite (weakly left-resolving normal) labelled space.

Before we begin, let us recall some notation from \cite{bp3}: For $v \in E^0$ and $\ell \ge 1$ let
\begin{equation*}
\Lambda_\ell (v) = \{ \alpha \in \mathcal{L} ( E^k ) :k \le \ell ,\ v \in r ( \alpha ) \}
\end{equation*}
denote the set of labelled paths of length at most $\ell$ which have a representative terminating at $v$. Define the relation $\thicksim_\ell$ on $E^0$ by $v \thicksim_\ell w$ if and only if $\Lambda_\ell (v) = \Lambda_\ell (w)$; hence $v \thicksim_\ell w$ if $v$ and $w$ receive exactly the same labelled paths of length at most $\ell$. Evidently $\thicksim_\ell$ is an equivalence relation and we use $[v]_\ell$ to denote the equivalence class of $v \in E^0$. We call the $[v]_\ell$ \emph{generalised vertices} as they play the same role in labelled spaces as vertices in a directed graph.

Set $\Omega_\ell = E^0 / \thicksim_\ell =\{ [v]_\ell : v\in E^0\}$ and let
\begin{equation*}
\Omega = \bigcup_{\ell=1}^\infty\Omega_\ell.
\end{equation*}
It follows from \cite[Proposition 2.4]{bp3} that if $v \in E^0$ and $\ell \ge 1$, then
\begin{equation} \label{eq:decomp}
[v]_\ell = \bigcup_{i=1}^n [ w_i ]_{\ell + 1} \text{ for some } w_1,\dots, w_n \in [v]_\ell .
\end{equation}
We then have that
\begin{equation} \label{eq:relrange2}
r ( [v]_\ell , a ) = \bigcup_{j=1}^m r ( [w_j]_{\ell+1} , a ) \text{ for all } a \in \mathcal{A}.
\end{equation}

It also follows from \cite[Proposition 2.4]{bp3} that $\mathcal{E}^{0,-}$ is the smallest subset of $2^{E^0}$ that contains $\Omega$ and is closed under finite unions and relative complements.

We let ${\Z}(\Omega)$ denote the subgroup $\spa_{\Z} \{ \chi_{[v]_\ell} \;:\; [v]_\ell \in \Omega \}$ of the group of functions from $E^0$ to $\Z$.

\begin{corollary}
Let $(1 - \Phi ) :  {\Z}(\Omega) \to  {\Z}(\Omega)$ be the linear map defined by
\begin{equation*}
(1 - \Phi)(\chi_{[v]_\ell}) = \chi_{[v]_\ell} - \sum_{a \in \labe ([v]_\ell E^1) } \chi_{r([v]_\ell,a)} \text{ for } [v]_\ell \in \Omega .
\end{equation*}
Then
\begin{eqnarray*}
K_1(C^*(E,{\mathcal L},{\mathcal E}^{0,-})) &\cong& \operatorname{ker}(1 - \Phi) \mbox{ and }\\
K_0(C^*(E,{\mathcal L},{\mathcal E}^{0,-}))  &\cong& \operatorname{coker}(1 - \Phi)
\end{eqnarray*}
via $[p_{[v]_\ell}]_0 \mapsto \chi_{[v]_\ell} + \operatorname{Im}(1 - \Phi)$ for $[v]_\ell \in \Omega$.
\end{corollary}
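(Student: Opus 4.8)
The plan is to apply Theorem~\ref{theorem:ktheory} to $(E,\labe,\tilde{\Omega})$ and transport the conclusion back along Proposition~\ref{omegaisgood}. Proposition~\ref{omegaisgood} gives $C^*(E,\labe,\mathcal{E}^{0,-}) \cong C^*(E,\labe,\tilde{\Omega})$, hence the asserted equalities $K_i(C^*(E,\labe,\mathcal{E}^{0,-})) = K_i(C^*(E,\labe,\tilde{\Omega}))$; and by Lemma~\ref{lemma:tomega} the space $(E,\labe,\tilde{\Omega})$ is regular and weakly left-resolving, so Theorem~\ref{theorem:ktheory} is available. What remains is to identify the two groups $\spa_\Z\{\cha{A} : A \in \tilde{\Omega}^\land\}$ and $\spa_\Z\{\cha{A} : A \in \tilde{\Omega}^\land_J\}$ of that theorem with ${\Z}(\Omega)$, and to check that the induced map matches the $(1-\Phi)$ of the statement.

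For the codomain, note that since $\sim_{\ell+1}$ refines $\sim_\ell$, the generalised vertices are pairwise nested or disjoint, so every finite intersection of elements of $\Omega$ is again a generalised vertex or empty. By inclusion--exclusion the characteristic function of any finite union of generalised vertices, and hence (using $\cha{A\setminus B} = \cha{A} - \cha{B}$) of any element of $\tilde{\Omega}^\land$, lies in ${\Z}(\Omega)$; thus $\spa_\Z\{\cha{A} : A \in \tilde{\Omega}^\land\} = {\Z}(\Omega)$. Moreover $r([v]_\ell,a) \in \tilde{\Omega}$ by \eqref{eq:relrange2}, so the terms $\cha{r([v]_\ell,a)}$ on the right of the defining formula do land in ${\Z}(\Omega)$.

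The heart of the argument is to show $\spa_\Z\{\cha{A} : A \in \tilde{\Omega}^\land_J\} = {\Z}(\Omega)$, for which it suffices to prove $\Omega \subseteq \tilde{\Omega}^\land_J$. First I would simplify the defining conditions. Because $(E,\labe)$ is left-resolving and $E$ has no sinks, Lemma~\ref{lem:lrprop} gives $r(C\setminus D,a) = r(C,a)\setminus r(D,a)$; so any $C\setminus D \in \mathcal{N}$ satisfies $r(C\setminus D,a) = \emptyset$ for all $a$, forcing each vertex of $C\setminus D$ to be a sink and hence $C\setminus D = \emptyset$. Thus $\mathcal{N} = \{\emptyset\}$, the disjointness requirement in $\tilde{\Omega}^\land_J$ is vacuous, and $\tilde{\Omega}^\land_J = \{A \in \tilde{\Omega}^\land : \labe(AE^1)\text{ is finite}\}$. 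Second, since $E$ has no sources each $v$ receives an edge labelled some $a$, so $a \in \Lambda_\ell(v)$ and $[v]_\ell \subseteq r(a)$; as $r(a) \in \mathcal{E}^{0,-}$, set-finiteness gives $\labe(r(a)E^1)$ finite, whence $\labe([v]_\ell E^1) \subseteq \labe(r(a)E^1)$ is finite. Therefore every $[v]_\ell$ lies in $\tilde{\Omega}^\land_J$, and ${\Z}(\Omega) \subseteq \spa_\Z\{\cha{A} : A \in \tilde{\Omega}^\land_J\} \subseteq \spa_\Z\{\cha{A} : A \in \tilde{\Omega}^\land\} = {\Z}(\Omega)$. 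I expect this identification, and in particular the two computations $\mathcal{N} = \{\emptyset\}$ and the finiteness of $\labe([v]_\ell E^1)$, to be the main obstacle and the place where the standing assumptions (no sinks, no sources, set-finiteness) are genuinely used.

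With both groups identified as ${\Z}(\Omega)$, the map $(1-\Phi)$ of Theorem~\ref{theorem:ktheory} acts on the generators of ${\Z}(\Omega)$ by $\cha{[v]_\ell} \mapsto \cha{[v]_\ell} - \sum_{a\in\labe([v]_\ell E^1)} \cha{r([v]_\ell,a)}$, which is exactly the map in the statement; being $\Z$-linear they coincide. Theorem~\ref{theorem:ktheory} then yields $K_1(C^*(E,\labe,\tilde{\Omega})) \cong \ker(1-\Phi)$ and $K_0(C^*(E,\labe,\tilde{\Omega})) \cong {\Z}(\Omega)/\im(1-\Phi) = \coker(1-\Phi)$, with $[p_A]_0 \mapsto \cha{A} + \im(1-\Phi)$ for $A \in \tilde{\Omega}^\land$ and in particular $[p_{[v]_\ell}]_0 \mapsto \cha{[v]_\ell} + \im(1-\Phi)$. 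Together with the equalities from Proposition~\ref{omegaisgood} this is the Corollary.
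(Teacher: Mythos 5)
Your proposal is correct and follows essentially the same route as the paper's proof: apply Theorem \ref{theorem:ktheory} to $(E,\labe,\tilde{\Omega})$ (available by Lemma \ref{lemma:tomega}), transport the result along Proposition \ref{omegaisgood}, and identify both $\spa_\Z\{\cha{A} : A\in\hat{\tilde{\Omega}}\}$ and $\spa_\Z\{\cha{A} : A\in\hat{\tilde{\Omega}}_J\}$ with $\Z(\Omega)$. The paper merely asserts $\hat{\tilde{\Omega}}_J=\hat{\tilde{\Omega}}=\tilde{\Omega}$ and $\spa_{\Z}\{\chi_{[v]_\ell} : [v]_\ell\in\tilde{\Omega}\}=\Z(\Omega)$ without justification, and your verification of these identifications (showing $\mathcal{N}=\{\emptyset\}$ from left-resolving plus no sinks, and finiteness of $\labe([v]_\ell E^1)$ from no sources plus set-finiteness) supplies exactly the details the paper's two-line proof leaves implicit.
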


\begin{proof}
Since $\spa_{\Z} \{ \chi_A : A\in \mathcal{E}^{0,-} \}=\spa_{\Z} \{ \chi_A : A\in \mathcal{E}^{0,-}_J \}={\Z}(\Omega)$, the result follows from Theorem \ref{theorem:ktheory}.
\end{proof}

In Theorem \ref{thm:howtocompute} we shall show how to compute the kernel and cokernel of $(1 - \Phi)$ in terms of a certain inductive limit. For $\ell \ge 1$ define $\Z(\Omega_\ell) = \spa_{\Z} \{ \chi_{[v]_\ell} : [v]_\ell \in \Omega_\ell \}$.  Note that since for every $v \in E^0$ and $\ell \ge 1$ there are vertices $w_1, \dots, w_m \in [v]_\ell$ such that $[v]_\ell = \bigcup_{j=1}^m [w_j]_{\ell + 1}$ we have a linear map $i_\ell : \Z(\Omega_\ell) \to \Z(\Omega_{l + 1})$ defined by \begin{equation*} i_\ell(\chi_{[v]_\ell}) = \sum_{j=1}^{m} \chi_{[w_j]_{\ell + 1}}.
\end{equation*}
Therefore we have an inductive system $\displaystyle{ \lim_{\longrightarrow}  (\Z (\Omega_\ell),i_\ell) }$. Note that since, for fixed $\ell \ge 1$, the sets $[v]_\ell$ are disjoint, the functions $\cha{[v]_\ell}$ form a basis for $\Z(\Omega_\ell)$.

\begin{proposition}
There is an isomorphism
\begin{equation*}
\Psi_\infty : \lim_{\longrightarrow} (\Z (\Omega_\ell),i_\ell) \to \Z (\Omega)
\end{equation*}
such that $\Psi_\infty (\cha{[v]_\ell}) = \cha{[v]_\ell}$ for all $[v]_\ell \in \Omega$.
\end{proposition}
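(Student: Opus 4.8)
The plan is to build $\Psi_\infty$ from the universal property of the inductive limit and then to check bijectivity by hand, the crucial observation being that the characteristic functions of distinct generalised vertices $[v]_\ell$ at a fixed level $\ell$ are functions with pairwise disjoint supports. First I would define, for each $\ell \ge 1$, a homomorphism $\psi_\ell : \Z(\Omega_\ell) \to \Z(\Omega)$ by $\psi_\ell(\cha{[v]_\ell}) = \cha{[v]_\ell}$, where on the right $\cha{[v]_\ell}$ is regarded as an element of $\Z(\Omega)$; since $\Omega_\ell \subseteq \Omega$ and the $\cha{[v]_\ell}$ with $[v]_\ell \in \Omega_\ell$ form a basis of $\Z(\Omega_\ell)$, this is a well-defined homomorphism.

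The key step is to verify compatibility with the connecting maps, namely $\psi_{\ell+1} \circ i_\ell = \psi_\ell$ for every $\ell$. Fix $[v]_\ell \in \Omega_\ell$ and write $[v]_\ell = \bigcup_{j=1}^m [w_j]_{\ell+1}$ as in \eqref{eq:decomp}, with the $[w_j]_{\ell+1}$ distinct. Because distinct $\thicksim_{\ell+1}$-equivalence classes are disjoint, we have the pointwise identity of functions on $E^0$
\begin{equation*}
\cha{[v]_\ell} = \sum_{j=1}^m \cha{[w_j]_{\ell+1}},
\end{equation*}
whence $\psi_{\ell+1}(i_\ell(\cha{[v]_\ell})) = \sum_{j=1}^m \cha{[w_j]_{\ell+1}} = \cha{[v]_\ell} = \psi_\ell(\cha{[v]_\ell})$. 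As the $\cha{[v]_\ell}$ generate $\Z(\Omega_\ell)$, this gives $\psi_{\ell+1}\circ i_\ell = \psi_\ell$. The universal property of $\varinjlim(\Z(\Omega_\ell),i_\ell)$ then produces a unique homomorphism $\Psi_\infty : \varinjlim(\Z(\Omega_\ell),i_\ell) \to \Z(\Omega)$ with $\Psi_\infty \circ \Psi_\ell = \psi_\ell$, where $\Psi_\ell : \Z(\Omega_\ell) \to \varinjlim(\Z(\Omega_\ell),i_\ell)$ is the canonical map; in particular $\Psi_\infty(\cha{[v]_\ell}) = \cha{[v]_\ell}$ under the usual identification of $\cha{[v]_\ell}$ with $\Psi_\ell(\cha{[v]_\ell})$.

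Surjectivity is immediate: every generator $\cha{[v]_\ell}$ of $\Z(\Omega)$ lies in the image since $\cha{[v]_\ell} = \Psi_\infty(\Psi_\ell(\cha{[v]_\ell}))$. For injectivity, which I expect to be the only delicate point, I would use that each $\psi_\ell$ is itself injective: if $\sum_v c_v \cha{[v]_\ell}$ maps to the zero function, then evaluating at a point of $[v]_\ell$ forces $c_v = 0$, again because distinct classes have disjoint support. Now any element $\xi$ of the limit is of the form $\Psi_\ell(x)$ for some $\ell$ and some $x \in \Z(\Omega_\ell)$; if $\Psi_\infty(\xi) = 0$ then $\psi_\ell(x) = \Psi_\infty(\Psi_\ell(x)) = 0$, so $x = 0$ by injectivity of $\psi_\ell$, and hence $\xi = \Psi_\ell(0) = 0$. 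Thus $\Psi_\infty$ is an isomorphism. The main thing to be careful about throughout is the passage between the formal generators $\cha{[v]_\ell}$ of the abstract groups $\Z(\Omega_\ell)$ and their realisations as genuine functions on $E^0$ with disjoint supports, which is exactly what makes both the compatibility identity and the injectivity of the $\psi_\ell$ work.
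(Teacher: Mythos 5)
Your proposal is correct and follows essentially the same route as the paper: define level maps $\Z(\Omega_\ell)\to\Z(\Omega)$ sending $\cha{[v]_\ell}$ to itself, check compatibility with the connecting maps $i_\ell$ via the disjoint decomposition $[v]_\ell=\bigcup_j[w_j]_{\ell+1}$, invoke the universal property of the inductive limit, and deduce bijectivity of $\Psi_\infty$ from the injectivity of each level map together with the fact that the images exhaust $\Z(\Omega)$. You merely spell out the details (the pointwise identity $\cha{[v]_\ell}=\sum_j\cha{[w_j]_{\ell+1}}$ and the evaluation argument for injectivity) that the paper leaves as ``one checks.''
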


\begin{proof}
For each $\ell \ge 1$ define  $\Psi_\ell : {\Z}(\Omega_\ell) \to {\Z}(\Omega)$ by $\Psi_\ell(\chi_{[v]_\ell}) = \chi_{[v]_\ell}$.  One checks that $\Psi_\ell \circ (i_\ell \circ \dots \circ i_k) = \Psi_k$ for every $\ell < k$ and so there is a map $\Psi_\infty :  \displaystyle{\lim_{\longrightarrow} ( \Z (\Omega_\ell),i_\ell) \to {\Z}(\Omega)}$.  Since $\Z (\Omega) = \bigcup_{\ell \ge 1} \Psi_\ell({\Z}(\Omega_\ell))$ and each $\Psi_\ell$ is injective, it follows that $\Psi_\infty$ is an isomorphism and our result is established.
\end{proof}

Equations \eqref{eq:decomp} and \eqref{eq:relrange2} allow us to make the following definitions.

\begin{definition} \label{def:oneminusphielldef}
For $\ell \ge 1$ define a linear map $(1 - \Phi)_\ell : {\Z}(\Omega_\ell) \to {\Z}(\Omega_{\ell + 1})$ by
\begin{equation*}
(1 - \Phi)_\ell \left( \chi_{[v]_\ell} \right) = i_\ell \left(\chi_{[v]_\ell}\right) -
\sum_{a \in \labe ([v]_\ell E^1)} \chi_{r([v]_\ell,a)},
\end{equation*}
and define a linear map $(1-\Phi) : \Z ( \Omega ) \to \Z ( \Omega )$ by
\begin{equation*}
(1 - \Phi ) \left( \chi_{[v]_\ell} \right) = \chi_{[v]_\ell} - \sum_{a \in \labe ([v]_\ell E^1)} \chi_{r([v]_\ell,a)}.
\end{equation*}
\end{definition}

\begin{lemma} \label{lemma:Phiinfinite}
For all $\ell \ge 1$, we have $(1 - \Phi)_{\ell +1} \circ i_\ell = i_{\ell+1} \circ (1 - \Phi)_\ell$.  Therefore the $(1 - \Phi)_\ell$'s induce a map $\displaystyle{(1 - \Phi)_\infty:   \lim_{\longrightarrow} ( \Z(\Omega_\ell ) ,i_\ell) \to  \lim_{\longrightarrow} ( \Z(\Omega_\ell ),i_\ell)}$ which satisfies $(1 - \Phi) \circ \Psi_\infty = \Psi_\infty \circ (1 - \Phi)_\infty$.
\end{lemma}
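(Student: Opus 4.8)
The plan is to verify the stated square identity on the canonical basis elements $\chi_{[v]_\ell}$ of $\Z(\Omega_\ell)$, then obtain $(1-\Phi)_\infty$ from the universal property of the inductive limit, and finally check the intertwining relation by evaluating both sides on the images of these generators.

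First I would fix $\ell\ge 1$ and $[v]_\ell\in\Omega_\ell$, and write $[v]_\ell=\bigcup_{j=1}^m[w_j]_{\ell+1}$ as a disjoint union as in \eqref{eq:decomp}, so that $i_\ell(\chi_{[v]_\ell})=\sum_{j=1}^m\chi_{[w_j]_{\ell+1}}$. Expanding $(1-\Phi)_{\ell+1}\circ i_\ell$ and $i_{\ell+1}\circ(1-\Phi)_\ell$ on $\chi_{[v]_\ell}$, the contributions coming from the $i$-part agree (each equals $\sum_{j=1}^m i_{\ell+1}(\chi_{[w_j]_{\ell+1}})$) and cancel, so it remains to match the subtracted terms, that is, to prove
\[
\sum_{j=1}^m\ \sum_{a\in\labe([w_j]_{\ell+1}E^1)}\chi_{r([w_j]_{\ell+1},a)}
=\sum_{a\in\labe([v]_\ell E^1)}i_{\ell+1}\bigl(\chi_{r([v]_\ell,a)}\bigr).
\]

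The key point is to rewrite the right-hand side. Since $(E,\labe,\tilde{\Omega})$ is weakly left-resolving (Lemma \ref{lemma:tomega}) and the $[w_j]_{\ell+1}$ are pairwise disjoint, the sets $r([w_j]_{\ell+1},a)$ are pairwise disjoint with union $r([v]_\ell,a)$; hence $\chi_{r([v]_\ell,a)}=\sum_{j=1}^m\chi_{r([w_j]_{\ell+1},a)}$ as functions on $E^0$. Because $i_{\ell+1}$ merely refines a decomposition into $\thicksim_{\ell+1}$-classes into one into $\thicksim_{\ell+2}$-classes, and such decompositions into equivalence classes are unique, I get $i_{\ell+1}(\chi_{r([v]_\ell,a)})=\sum_{j=1}^m\chi_{r([w_j]_{\ell+1},a)}$ in $\Z(\Omega_{\ell+2})$. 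Substituting and interchanging the sums, the right-hand side becomes $\sum_{j=1}^m\sum_{a\in\labe([v]_\ell E^1)}\chi_{r([w_j]_{\ell+1},a)}$; and since $r([w_j]_{\ell+1},a)=\emptyset$ (so the corresponding term vanishes) whenever $a\notin\labe([w_j]_{\ell+1}E^1)$, the inner sum may be restricted to $a\in\labe([w_j]_{\ell+1}E^1)$, recovering the left-hand side. This establishes the square identity.

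The existence of $(1-\Phi)_\infty$ is then formal: writing $j_\ell$ for the canonical maps $\Z(\Omega_\ell)\to\lim_{\longrightarrow}(\Z(\Omega_\ell),i_\ell)$, the square identity together with $j_{\ell+2}\circ i_{\ell+1}=j_{\ell+1}$ shows that the composites $j_{\ell+1}\circ(1-\Phi)_\ell$ are compatible with the connecting maps, so the universal property yields a unique $(1-\Phi)_\infty$ with $(1-\Phi)_\infty\circ j_\ell=j_{\ell+1}\circ(1-\Phi)_\ell$. For the final relation I would evaluate both sides on the generators $j_\ell(\chi_{[v]_\ell})$ of the limit, using $\Psi_\infty\circ j_\ell=\Psi_\ell$ and the compatibility $\Psi_{\ell+1}\circ i_\ell=\Psi_\ell$ underlying the definition of $\Psi_\infty$: the left side gives $(1-\Phi)(\chi_{[v]_\ell})$, while the right side gives $\Psi_{\ell+1}((1-\Phi)_\ell(\chi_{[v]_\ell}))$, and both reduce to $\chi_{[v]_\ell}-\sum_{a\in\labe([v]_\ell E^1)}\chi_{r([v]_\ell,a)}$ in $\Z(\Omega)$. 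The only genuine computation is matching the subtracted terms in the square identity, and I expect the one real obstacle to be the mismatch between the label sets $\labe([v]_\ell E^1)$ and $\labe([w_j]_{\ell+1}E^1)$; this is resolved precisely by the vanishing of $r([w_j]_{\ell+1},a)$ for labels not emitted by $[w_j]_{\ell+1}$, while everything else is the universal property together with bookkeeping under the identification of elements of $\Z(\Omega)$ with functions $E^0\to\Z$.
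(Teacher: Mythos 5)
Your proposal is correct and follows essentially the same route as the paper: both verify the square identity on the basis elements $\chi_{[v]_\ell}$ via the key identity $\chi_{r([v]_\ell,a)}=\sum_{j=1}^m\chi_{r([w_j]_{\ell+1},a)}$ (with the connecting maps acting as the identity on underlying functions), then obtain $(1-\Phi)_\infty$ formally and check the intertwining relation on generators. If anything, you are more explicit than the paper about the two points it glosses over --- the pairwise disjointness of the sets $r([w_j]_{\ell+1},a)$ (via weak left-resolving) and the bookkeeping between the label sets $\labe([v]_\ell E^1)$ and $\labe([w_j]_{\ell+1}E^1)$.
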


\begin{proof}
Let $\ell \ge 1$.  Then for $[v]_\ell \in \Omega_\ell$ with $[v]_\ell = \bigcup_{j=1}^m [w_j]_{\ell + 1}$ we have
\begin{align*}
(1 - \Phi)_{\ell + 1}  \left(i_\ell \left(\chi_{[v]_\ell}\right) \right )
&= (1 - \Phi)_{\ell +1} \left(\sum_{j=1}^m \chi_{[w_j]_{\ell + 1}} \right) \\
&= \sum_{j=1}^m \left(i_{\ell + 1} \left( \chi_{[w_j]_{\ell + 1}} \right) - \sum_{a \in \labe ([w_j]_{\ell + 1} E^1) } \chi_{r([w_j]_{\ell + 1},a)}\right).
\end{align*}
Also,
\begin{align*}
i_{\ell + 1} \left((1 - \Phi)_\ell \left(\chi_{[v]_\ell}\right)\right)
&= i_{\ell + 1}\left( i_\ell \left( \chi_{[v]_\ell} \right) - \sum_{a \in \labe ([v]_\ell E^1 )}\chi_{r([v]_\ell,a)}\right) \\
&= i_{\ell + 1} \left( \sum_{j=1}^m \chi_{[w_j]_{\ell + 1}} - \sum_{j=1}^m \sum_{a \in \labe( [w_j]_{\ell + 1} E^1)} \chi_{r([w_j]_{\ell + 1},a)} \right) \intertext{ since $\chi_{r([v]_\ell,a)} = \sum_{j=1}^m \chi_{r([w_j]_{\ell + 1},a)}$,}
&= \sum_{j=1}^m \left( i_{\ell + 1} \left( \chi_{[w_j]_{\ell + 1}} \right) - \sum_{a \in \labe ([w_j]_{\ell + 1} E^1) } \chi_{r([w_j]_{\ell + 1},a)} \right)
\end{align*}
as $i_{\ell +1}$ is really just the identity map on $\Omega_{\ell + 1}$ and so $(1 - \Phi)_{\ell +1} \circ i_\ell = i_{\ell+1} \circ (1 - \Phi)_\ell$.

Also for $\ell \ge 1$ we have
\begin{equation*}
(1 - \Phi) (\Psi_\infty (\chi_{[v]_\ell})) = (1 - \Phi) (\chi_{[v]_\ell})
= i_\ell(\chi_{[v]_\ell}) - \sum_{a \in  \labe ([v]_\ell E^1) } \chi_{r([v]_\ell,a)}
\end{equation*}
and
\begin{eqnarray*}
\Psi_\infty( (1 - \Phi)_\infty (\chi_{[v]_\ell})) &=& \Psi_\infty ( (1 - \Phi)_\ell(\chi_{[v]_\ell})) = \Psi_\infty \left( i_\ell(\chi_{[v]_\ell})
- \sum_{a \in \labe ([v]_\ell E^1) } \chi_{r([v]_\ell,a)}
\right) \\
&=& i_\ell(\chi_{[v]_\ell}) - \sum_{a \in \labe ([v]_\ell E^1 )} \chi_{r([v]_\ell,a)}
\end{eqnarray*}
and so $(1 - \Phi) \circ \Psi_\infty = \Psi_\infty \circ (1 - \Phi)_\infty$.
\end{proof}

We have established the following commuting diagram:
\begin{equation*}
\beginpicture
\setcoordinatesystem units <1cm,1cm>

\setplotarea x from -5 to 7, y from  -2  to 2

\put{$\Z(\Omega_1)$}[l] at  -4 1

\put{$\Z(\Omega_2)$}[l] at  -2 1

\put{$\Z(\Omega_2)$}[l] at  0 1

\put{$\dots$}[l] at 2 1

\put{$\displaystyle{\lim_{\longrightarrow} (\Z(\Omega_\ell),i_\ell)}$}[l] at 3 1

\put{$\Z(\Omega)$}[l] at 6.5 1

\put{$\Z(\Omega_1)$}[l] at  -4 -1

\put{$\Z(\Omega_2)$}[l] at  -2 -1

\put{$\Z(\Omega_2)$}[l] at  0 -1

\put{$\dots$}[l] at 2 -1

\put{$\displaystyle{\lim_{\longrightarrow} (\Z(\Omega_\ell),i_\ell)}$}[l] at 3 -1

\put{$\Z(\Omega)$}[l] at 6.5 -1

\put{$i_1$}[b] at -2.5 1.2

\put{$i_2$}[b] at -0.5 1.2

\put{$i_3$}[b] at 1.5 1.2

\put{$\Psi_\infty$}[b] at 6 1.2

\put{$i_1$}[t] at -2.5 -1.2

\put{$i_2$}[t] at -0.5 -1.2

\put{$i_3$}[t] at 1.5 -1.2

\put{$\Psi_\infty$}[t] at 6 -1.2

\put{$(1 - \Phi)_\infty$}[l] at 4.2 0

\put{$1 - \Phi$}[l] at 7.2 0

\put{$(1-\Phi)_1$}[l] at   -2.8     0.25

\put{$(1-\Phi)_2$}[l] at   -0.8     0.25

\arrow <0.25cm> [0.1,0.3] from -2.9 1 to -2.1 1

\arrow <0.25cm> [0.1,0.3] from -0.9 1 to -0.1 1

\arrow <0.25cm> [0.1,0.3] from 1.1 1 to 1.9 1

\arrow <0.25cm> [0.1,0.3] from -2.9 1 to -2.1 1

\arrow <0.25cm> [0.1,0.3] from 5.5 1 to 6.4 1

\arrow <0.25cm> [0.1,0.3] from -2.9 -1 to -2.1 -1

\arrow <0.25cm> [0.1,0.3] from -0.9 -1 to -0.1 -1

\arrow <0.25cm> [0.1,0.3] from 1.1 -1 to 1.9 -1

\arrow <0.25cm> [0.1,0.3] from -2.9 -1 to -2.1 -1

\arrow <0.25cm> [0.1,0.3] from 5.5 -1 to 6.4 -1

\arrow <0.25cm> [0.1,0.3] from 4 0.6 to 4 -0.6

\arrow <0.25cm> [0.1,0.3] from 7 0.6 to 7 -0.6

\arrow <0.25cm> [0.1,0.3] from -3.6 0.6 to -1.8 -0.6

\arrow <0.25cm> [0.1,0.3] from -1.6 0.6 to 0.2 -0.6
\endpicture
\end{equation*}

Let $\ell \ge 1$. It is easy to check that $i_{\ell} (\operatorname{ker} ( 1 - \Phi )_\ell) \subseteq \operatorname{ker} ( 1 - \Phi )_{\ell +1}$ and $i_{\ell} (\operatorname{Im} ( 1 - \Phi )_\ell) \subseteq \operatorname{Im} ( 1 - \Phi )_{\ell +1}$. It follows that $i_\ell : \Z ( \Omega_\ell ) \to \Z ( \Omega_{\ell+1} )$ induces maps $(i_\ell )_* : \operatorname{ker} ( 1 - \Phi )_\ell \to \operatorname{ker} ( 1 - \Phi )_{\ell+1}$ and $\tilde{i}_{\ell+1} : \operatorname{coker} ( 1 - \Phi )_\ell \to \operatorname{coker} ( 1 - \Phi )_{\ell +1}$.

\begin{theorem} \label{thm:howtocompute}
With the above notation we have
\begin{align}
\operatorname{ker} (1 - \Phi) &\cong \lim_{\longrightarrow} \left( \operatorname{ker} (1 - \Phi)_\ell , (i_\ell)_* \right) \label{eq:ker}, \\
\operatorname{coker}(1 - \Phi) &\cong \lim_{\longrightarrow} \left( \operatorname{coker}(1 - \Phi)_\ell,\tilde{i_\ell} \right) \label{eq:coker}.
\end{align}
\end{theorem}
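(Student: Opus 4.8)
The plan is to transport everything to the colimit $\displaystyle\lim_{\longrightarrow}(\Z(\Omega_\ell),i_\ell)$ and then appeal to the exactness of filtered colimits. First I would use the isomorphism $\Psi_\infty$ of the preceding proposition together with the intertwining relation $(1-\Phi)\circ\Psi_\infty=\Psi_\infty\circ(1-\Phi)_\infty$ from Lemma \ref{lemma:Phiinfinite}. Since $\Psi_\infty$ is an isomorphism conjugating $(1-\Phi)_\infty$ to $(1-\Phi)$, it restricts to an isomorphism $\ker(1-\Phi)_\infty\cong\ker(1-\Phi)$ and descends to an isomorphism $\coker(1-\Phi)_\infty\cong\coker(1-\Phi)$. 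This reduces both \eqref{eq:ker} and \eqref{eq:coker} to computing the kernel and cokernel of the single endomorphism $(1-\Phi)_\infty$ of the colimit.

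Next I would recognise the whole picture as a morphism of inductive systems. The maps $(1-\Phi)_\ell:\Z(\Omega_\ell)\to\Z(\Omega_{\ell+1})$ constitute a morphism $f$ from the system $(\Z(\Omega_\ell),i_\ell)$ to itself: the only subtlety is a harmless index shift, and the compatibility $(1-\Phi)_{\ell+1}\circ i_\ell=i_{\ell+1}\circ(1-\Phi)_\ell$ supplied by Lemma \ref{lemma:Phiinfinite} is exactly what makes $f$ commute with the connecting maps, so that $(1-\Phi)_\infty$ is the induced endomorphism of the colimit. The kernel and cokernel of $f$, formed termwise, are precisely the inductive systems $(\ker(1-\Phi)_\ell,(i_\ell)_*)$ and $(\coker(1-\Phi)_\ell,\tilde{i}_\ell)$, the connecting maps being well defined by the inclusions $i_\ell(\ker(1-\Phi)_\ell)\subseteq\ker(1-\Phi)_{\ell+1}$ and $i_{\ell+1}(\im(1-\Phi)_\ell)\subseteq\im(1-\Phi)_{\ell+1}$ noted before the theorem.

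The heart of the argument is then the standard fact that the colimit functor over the directed set $\N$ is exact on abelian groups, so that it commutes with kernels and cokernels; that is, I would establish
\begin{equation*}
\ker(1-\Phi)_\infty\cong\lim_{\longrightarrow}\bigl(\ker(1-\Phi)_\ell,(i_\ell)_*\bigr),\qquad
\coker(1-\Phi)_\infty\cong\lim_{\longrightarrow}\bigl(\coker(1-\Phi)_\ell,\tilde{i}_\ell\bigr).
\end{equation*}
For the kernel I would argue directly: an element of $\ker(1-\Phi)_\infty$ is represented by some $x\in\Z(\Omega_m)$ for which $(1-\Phi)_m(x)$ is annihilated by a sufficiently long composite of connecting maps; using the intertwining relation this forces a suitable $i$-image of $x$ to lie in $\ker(1-\Phi)_{m'}$, which shows the canonical map out of $\lim_{\longrightarrow}(\ker(1-\Phi)_\ell,(i_\ell)_*)$ is surjective, while injectivity is the usual observation that an element vanishing in a filtered colimit already vanishes at a finite stage. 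For the cokernel I would instead apply the colimit functor to the directed family of short exact sequences $0\to\im(1-\Phi)_\ell\to\Z(\Omega_{\ell+1})\to\coker(1-\Phi)_\ell\to0$, identifying $\lim_{\longrightarrow}\im(1-\Phi)_\ell$ with $\im(1-\Phi)_\infty$ and $\lim_{\longrightarrow}\Z(\Omega_{\ell+1})$ with the colimit itself.

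I expect the main obstacle to be purely bookkeeping: keeping track of the index shift between the source $\Z(\Omega_\ell)$ and target $\Z(\Omega_{\ell+1})$ of $(1-\Phi)_\ell$, and checking that the kernels and images genuinely assemble into sub- and quotient-systems with the stated connecting maps $(i_\ell)_*$ and $\tilde{i}_\ell$. Once Lemma \ref{lemma:Phiinfinite} is read as the statement that $f$ is a morphism of inductive systems, the remaining content is exactly the exactness of filtered colimits, which is routine.
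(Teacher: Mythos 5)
Your proposal is correct, and its skeleton coincides with the paper's: both arguments first use $\Psi_\infty$ and the intertwining relation of Lemma \ref{lemma:Phiinfinite} to replace $1-\Phi$ by $(1-\Phi)_\infty$, and then identify $\ker(1-\Phi)_\infty$ and $\coker(1-\Phi)_\infty$ with $\lim_{\longrightarrow}\bigl(\ker(1-\Phi)_\ell,(i_\ell)_*\bigr)$ and $\lim_{\longrightarrow}\bigl(\coker(1-\Phi)_\ell,\tilde{i}_\ell\bigr)$. Where you differ is in how that identification is established. The paper dismisses the kernel case with ``by definition of $(1-\Phi)_\infty$'' and proves the cokernel case by hand: it defines maps $\eta_\ell:\coker(1-\Phi)_\ell\to\coker(1-\Phi)_\infty$, checks $\eta_{\ell+1}\circ\tilde{i}_\ell=\eta_\ell$, and obtains $\eta_\infty$ from the universal property, verifying bijectivity directly. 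You instead read the family $\bigl((1-\Phi)_\ell\bigr)_\ell$ as a morphism of inductive systems (correctly flagging the index shift as harmless by cofinality) and invoke exactness of filtered colimits, handling the cokernel by passing to the colimit of the short exact sequences $0\to\im(1-\Phi)_\ell\to\Z(\Omega_{\ell+1})\to\coker(1-\Phi)_\ell\to 0$; the only extra step this needs, which you note, is that $\lim_{\longrightarrow}\im(1-\Phi)_\ell$ maps isomorphically onto $\im(1-\Phi)_\infty$. Your route is slightly more general and, on one point, more careful than the paper's: the paper deduces injectivity of $\eta_\infty$ from the assertion that \emph{each} $\eta_\ell$ is injective, an assertion that is equivalent to eventual injectivity of the connecting maps $\tilde{i}_\ell$ and is neither justified there nor needed; your criterion --- that an element of a filtered colimit vanishes precisely when it vanishes at some finite stage, which you extract from the intertwining relation --- is exactly what injectivity of the comparison map requires, and it holds unconditionally. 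So your argument not only proves the theorem but quietly repairs a small overstatement in the paper's own proof.
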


\begin{proof}
By definition of $(1 - \Phi)_\infty$ we have
\begin{equation*}
\operatorname{ker} (1 - \Phi)_\infty \cong \lim_{\longrightarrow} \left( \operatorname{ker} (1 - \Phi)_\ell, (i_\ell)_* \right).
\end{equation*}
It follows from Lemma \ref{lemma:Phiinfinite} that $\operatorname{ker} (1 - \Phi) \cong \operatorname{ker} (1 - \Phi)_\infty$ and Equation \eqref{eq:ker} follows. We claim that
\begin{equation*}
\operatorname{coker}(1 - \Phi)_\infty \cong \lim_{\longrightarrow} \left( \operatorname{coker}(1 - \Phi)_\ell,\tilde{i_\ell} \right).
\end{equation*}
Define $\eta_\ell : \operatorname{coker}  (1 - \Phi)_\ell \to \operatorname{coker} (1 - \Phi)_\infty$ by
\begin{equation*}
\eta_\ell \left( \chi_{[v]_{\ell + 1}} + \operatorname{Im} (1 - \Phi)_\ell \right) =
\chi_{[v]_\ell} + \operatorname{Im} (1 - \Phi)_\infty.
\end{equation*}
Note that $\eta_\ell$ is well-defined since $\operatorname{Im} (1 - \Phi)_\ell \subseteq \operatorname{Im} (1 - \Phi)_\infty$ for all $\ell \ge 1$.  We claim that $\eta_{\ell + 1} \circ \tilde{i}_\ell = \eta_\ell$ for all $\ell \ge 1$.  Since
\begin{eqnarray*}
\eta_{\ell + 1} \tilde{i}_\ell \left( \chi_{[v]_{\ell + 1}} + \operatorname{Im} (1 - \Phi)_\ell \right)
&=& \eta_{\ell + 1} \left( i_{\ell + 1} \chi_{[v]_{\ell + 1}}  + \operatorname{Im} (1 - \Phi)_{\ell + 1} \right)  \\
&=& i_{\ell + 1} \left( \chi_{[v]_{\ell + 1}}\right) + \operatorname{Im} (1 - \Phi)_\infty  \\
&=&  \chi_{[v]_{\ell + 1}} + \operatorname{Im} (1 - \Phi)_\infty \\
&=& \eta_\ell (\chi_{[v]_{\ell + 1}} + \operatorname{Im} (1 - \Phi)_\ell) ,
\end{eqnarray*}
this establishes our claim. By the universal property of the inductive limit the $\eta_\ell$'s  induce a map $\displaystyle \eta_\infty : \lim_{\longrightarrow} \left( \operatorname{coker} (1 - \Phi)_\ell,\tilde{i}_\ell \right) \to \operatorname{coker} (1 - \Phi)_\infty$ which is injective since each $\eta_\ell$ is injective, and is surjective as $\operatorname{coker} (1 - \Phi)_\infty = \bigcup_{\ell \ge 1} \eta_\ell \left(\operatorname{coker} (1 - \Phi)_\ell \right)$. Hence $\eta_\infty$ is an isomorphism which establishes our claim. Finally, it follows from Lemma \ref{lemma:Phiinfinite} that $\operatorname{coker} (1 - \Phi)_\infty \cong \operatorname{coker} (1 - \Phi)$ and Equation \eqref{eq:coker} follows.
\end{proof}

\begin{remark}
Recall from \cite{bp3} that to a left-resolving normal labelled graph $(E,\mathcal{L})$ with no sources and sinks over a finite alphabet we may associate an essential symbolic matrix system $(M(E),I(E))$, which by \cite[Proposition 2.1]{m99} determines a unique $\lambda$-graph system $\mathfrak{L}_{M(E),I(E)}$.

By \cite[Proposition 3.6]{bp3} we know that $C^* ( E , \mathcal{L} , \mathcal{E}^{0,-} )$ is isomorphic to $\mathcal{O}_{\mathfrak{L}_{M(E),I(E)}}$. We should therefore expect some similarities between our computation of the $K$-theory of $C^* ( E , \mathcal{L} , \mathcal{E}^{0,-} )$ and the computation of the $K$-theory of $\mathcal{O}_{\mathfrak{L}_{M(E),I(E)}}$ outlined in \cite[Section 9]{m99} (see also \cite{MR1646513,MR1869072}). Indeed, this is the case.

To an essential symbolic matrix system $(M,I) = ( M_{\ell,\ell+1} , I_{\ell,\ell+1} )_{\ell \ge 1}$ we may associate a $\lambda$-graph system $\mathfrak{L}_{M,I}$ as in \cite[Section 2]{m99}. Following \cite[Section 9]{m99} we see that
\begin{equation*}
K_* ( \mathcal{O}_{\mathfrak{L}_{M,I}} ) = K_* ( M,I ) = \varinjlim ( K_*^\ell ( M,I ) , i_*^\ell )
\end{equation*}
for $*=0,1$ where
\begin{align*}
K_0^\ell (M,I) &= \Z^{m(\ell+1)} / ( I^t_{\ell,\ell+1} - M^t_{\ell,\ell+1} ) \Z^{m(\ell)} \\
K_1^\ell (M,I) &= \operatorname{ker} \, ( I^t_{\ell,\ell+1} - M^t_{\ell,\ell+1} ) \text{ in } \Z^{m(\ell)}
\end{align*}
and $i^\ell_* : K^\ell_* ( M,I) \to K^{\ell+1}_* (M,I)$ is induced by the map $I^t_{\ell,\ell+1} : \Z^{m(\ell)} \to \Z^{m(\ell+1)}$.

Under our identification of the labelled graph $(E, \mathcal{L} )$ with the essential symbolic matrix system $(M(E),I(E))$ we have $| \Omega_\ell | = m( \ell )$, and so we may identify the group $\Z ( \Omega_\ell )$ with $\Z^{m(\ell)}$ and the map $( 1 - \Phi )_\ell : \Z ( \Omega_\ell ) \to \Z ( \Omega_{\ell+1} )$ with the map $( I(E)^t_{\ell,\ell+1} - M(E)^t_{\ell,\ell+1} ) : \Z^{m(\ell)} \to \Z^{m ( \ell+1 )}$.  Hence $\operatorname{coker} ( 1 - \Phi )_\ell$ may be identified with $K_0^\ell ( M(E),I(E) )$ and $\operatorname{ker} ( 1 - \Phi )_\ell$ may be identified with  $K_1^\ell (M(E),I(E))$. Since the map $( i_\ell )_* : \operatorname{ker} ( 1 - \Phi )_\ell  \to \operatorname{ker} ( 1 - \Phi )_{\ell+1}$ corresponds to $i^\ell_0 : K_0^\ell ( M(E),I(E) ) \to K_0^{\ell+1} (M(E),I(E))$ and the map $\tilde{i}_\ell : \operatorname{coker} ( 1 - \Phi )_\ell \to \operatorname{coker} ( 1 - \Phi )_{\ell+1}$ corresponds to $i^\ell_1 : K_1^\ell (M(E),I(E)) \to K_1^{\ell+1} (M(E),I(E))$, we may identify $\operatorname{coker} ( 1 - \Phi )$ with $K_0 ( M(E), I(E) )$ and $\operatorname{ker} ( 1 - \Phi )$ with $K_1 ( M(E), I(E) )$.
\end{remark}

\section{Computations} \label{sec:computations}

In this section we compute the $K$-theory of certain weakly left-resolving normal  labelled graphs using the techniques outlined in Section \ref{sec:howdismantleanatombomb}.

\begin{example}
Let $( E_F , \mathcal{L}_F )$ be the left Fischer cover (see \cite[Example 2.15]{bep}) of the even shift. Then $\Omega_\ell = E_F^0$ for all $\ell \ge 1$ and so $\Z ( \Omega ) = \Z^2$ and the matrix of $(1-\Phi)$ is $\left(
\begin{smallmatrix}
    0 & -1 \\
    -1 & 1
\end{smallmatrix}
\right)$. Hence the $K$-theory of the labelled graph is the same as the $K$-theory of the underlying graph, that is $K_0 = K_1 = \{ 0 \}$. This should not be a surprise in the light of \cite[Theorem 6.6]{MR2304922}. We obtain similar conclusions for the left Krieger cover $(E_K,{\mathcal L}_K)$ of the even shift, though in this case $\Omega_\ell = E_K^0$ for $\ell \ge 2$.
\end{example}

\begin{example}
Now consider the labelled graph $(E,{\mathcal L})$ shown below
\begin{equation*}
\beginpicture
\setcoordinatesystem units <2cm,1cm>

\setplotarea x from -3.1 to 3, y from  0.3  to 1

\put{$\ldots$}[l] at -3.3 0

\put{$\bullet$} at -2 0

\put{$\bullet$} at -1 0

\put{$\bullet$} at 0 0

\put{$\bullet$} at 1 0

\put{$\bullet$} at 2 0

\put{$\bullet$} at -3 0

\put{$\bullet$} at 3 0

\put{$\ldots$}[l] at 3.1 0

\put{ $b$} at -2.5 0.4

\put{ $b$} at -1.5 0.4

\put{$b$} at -0.5 0.4

\put{$b$} at 0.5 0.4

\put{$b$} at 1.5 0.4

\put{ $b$} at 2.5 0.4

\put{ $c$} at -2.5 -0.7

\put{$c$} at -1.5 -0.7

\put{$c$} at -0.5 -0.7

\put{$c$} at 0.5 -0.7

\put{$c$} at 1.5 -0.7

\put{$c$} at 2.5 -0.7

\put{$a$}[b] at 0 1.2

\circulararc 360 degrees from 0 0 center at 0 0.5

\setquadratic

\plot -1.1 -0.1 -1.5 -0.5 -1.9 -0.1 /

\plot -2.1 -0.1 -2.5 -0.5 -2.9 -0.1 /

\plot -0.1 -0.1 -0.5 -0.5 -0.9 -0.1 /

\plot 0.1 -0.1 0.5 -0.5 0.9 -0.1 /

\plot 1.1 -0.1 1.5 -0.5 1.9 -0.1 /

\plot 2.1 -0.1 2.5 -0.5 2.9 -0.1 /

\arrow <0.25cm> [0.1,0.3] from 0.05 1 to -0.05 1.04

\arrow <0.25cm> [0.1,0.3] from -2.9 0 to -2.1 0

\arrow <0.25cm> [0.1,0.3] from -1.9 0 to -1.1 0

\arrow <0.25cm> [0.1,0.3] from -0.9 0 to -0.1 0

\arrow <0.25cm> [0.1,0.3] from 0.1 0 to 0.9 0

\arrow <0.25cm> [0.1,0.3] from 1.1 0 to 1.9 0

\arrow <0.25cm> [0.1,0.3] from 2.1 0 to 2.9 0

\arrow <0.25cm> [0.1,0.3] from -2.8 -0.27 to -2.9 -0.1

\arrow <0.25cm> [0.1,0.3] from -1.8 -0.27 to -1.9 -0.1

\arrow <0.25cm> [0.1,0.3] from -0.8 -0.27 to -0.9 -0.1

\arrow <0.25cm> [0.1,0.3] from 0.2 -0.27 to 0.1 -0.1

\arrow <0.25cm> [0.1,0.3] from 1.2 -0.27 to 1.1 -0.1

\arrow <0.25cm> [0.1,0.3] from 2.2 -0.27 to 2.1 -0.1

\endpicture
\end{equation*}
which was discussed in \cite[Section 7.2]{bp3} and \cite{kj}. We label the vertices of $E$ by the integers, where $0 = r(a)$.

Fix $\ell \ge 1$. We set $\text{rest}_\ell = \{ \pm \ell, \pm (\ell + 1) , \ldots \}$. Then $[n]_\ell =  \{n\}$ for $| n  | \le \ell -1$ and $[n]_\ell = \text{rest}_\ell$ for $| n | \ge \ell$. Hence, if we identify each vertex $n$ with the singleton $\{n\}$, then we have that
\begin{equation*}
\Omega_1 = \{ 0 ,  \text{rest}_1 \bigr\},
\Omega_2 = \{ 0 , \pm 1 , \text{rest}_2 \}, \ldots ,
\Omega_\ell = \{0 , \pm 1 , \pm 2 , \ldots , \pm (\ell -1) ,  \text{rest}_\ell \}, \dots .
\end{equation*}

By Definition \ref{def:oneminusphielldef} we have $(1-\Phi)_1 ( \chi_0 ) = - \chi_{1} - \chi_{-1}$ and $(1-\Phi)_1 ( \chi_{\text{rest}_1} ) = - 2 \chi_0 - \chi_{\text{rest}_2}$. It is straightforward to show that $\ker ( 1 - \Phi )_1 = \{ 0 \}$ and that every element of $\Z ( \Omega_2 )$ can be written as $a \chi_{0} + b \chi_{1} + w$ where $a,b, \in \Z$ and $w \in \im ( 1 - \Phi )_1$. Hence $\operatorname{coker} \, (1 - \Phi)_1 \cong \Z^{2}$. For $\ell \ge 2$ the maps $(1 - \Phi )_\ell : \Z ( \Omega_\ell ) \to \Z ( \Omega_{\ell+1} )$ are given by
\begin{align}
(1 - \Phi)_\ell ( \chi_n ) &= \chi_n - \chi_{n-1} - \chi_{n+1} \text{ if } n \neq \text{rest}_\ell , 0 \label{eq:second} \\
(1 - \Phi)_\ell ( \chi_0 ) &= - \chi_{1} - \chi_{-1} \label{eq:third} \\
(1 - \Phi)_\ell ( \chi_{\text{rest}_\ell} ) &= - \chi_{\ell-1}  - \chi_{-\ell+1} -  \chi_{\text{rest}_{\ell+1}}. \label{eq:first}
\end{align}
Through systematic use of \eqref{eq:first}, then \eqref{eq:second} for $n = -\ell+1 , \ldots , -1$, followed by \eqref{eq:third} and then \eqref{eq:second} for $n=\ell-1 , \ldots , 1$, one checks that  $\{ ( 1 - \Phi )_\ell ( \chi_i ) : i \in \Omega_\ell \}$ is linearly independent in $\Z ( \Omega_{\ell+1} )$, and so $\operatorname{ker} \, (1 - \Phi)_\ell = \{ 0 \}$ for $\ell \ge 2$. A similar procedure allows one to show that every element of $\Z ( \Omega_\ell )$ can be written in the form $a \chi_{0} + b \chi_{1} + w$ where $a,b, \in \Z$ and $w \in \im ( 1 - \Phi )_\ell$. Hence for $\ell \ge 2$ $\operatorname{coker} \, (1 - \Phi)_\ell = \Z ( \Omega_{\ell+1} ) / \im ( 1 - \Phi )_\ell$ is freely generated by
\begin{equation*}
\chi_0 + \im ( 1 - \Phi )_\ell \text{ and }  \chi_{1} + \im ( 1 - \Phi )_\ell .
\end{equation*}
It follows that $\operatorname{coker} \, (1 - \Phi)_\ell \cong \Z^{2}$ for all $\ell \ge 1$.

For $\ell \ge 1$ the maps $i_\ell : \Z ( \Omega_\ell ) \to \Z ( \Omega_{\ell+1} )$ are given by
\begin{align*}
i_\ell ( \chi_i ) &= \chi_i \text{ for } 0 \le \vert i \vert \le \ell -1 \text{ and } \\
i_\ell ( \chi_{\text{rest}_\ell} ) &= \chi_{\ell} + \chi_{-\ell} + \chi_{\text{rest}_{\ell+1}}
\end{align*}
and so for $\ell \ge 2$ the maps $\tilde{i}_\ell$ are the identity map. Thus, it follows from Theorem \ref{thm:howtocompute} that
\begin{equation*}
K_0 ( C^* ( E , \mathcal{L} , \mathcal{E}^{0,-} ) ) \cong \Z^2 \text{ and } K_1 ( C^* ( E , \mathcal{L} , \mathcal{E}^{0,-} ) ) \cong \{ 0 \} .
\end{equation*}
\end{example}

\begin{example}
Consider the Dyck shift $D_2$ which has labelled graph presentation $(E_2,\mathcal{L}_2)$ below:
\[
\beginpicture

\setcoordinatesystem units <2cm,1cm>

\setplotarea x from -3.1 to 3, y from -2.1 to 3

\put{$\bullet$} at  0 2

\put{$\bullet$} at -1 1

\put{$\bullet$} at 1  1

\put{$\bullet$} at -1 0

\put{$\bullet$} at -2 0

\put{$\bullet$} at 1 0

\put{$\bullet$} at 2 0

\put{$\bullet$} at -1.5 -1

\put{$\bullet$} at -2 -1

\put{$\bullet$} at -0.5 -1

\put{$\bullet$} at 0.5 -1

\put{$\bullet$} at 1.5 -1

\put{$\bullet$} at 2 -1

\put{$\bullet$} at -3 -1

\put{$\bullet$} at 3 -1

\put{{\tiny $\alpha_1$}} at -0.4 1.4

\put{{\tiny $\alpha_1$}} at -1.4 0.4

\put{{\tiny $\alpha_1$}} at -2.4 -0.6

\put{{\tiny $\alpha_2$}} at 0.4 1.4

\put{{\tiny $\alpha_2$}} at 1.4 0.4

\put{{\tiny $\alpha_2$}} at 2.4 -0.6

\put{{\tiny $\beta_1$}}[r] at -0.4 2.1

\put{{\tiny $\beta_1$}}[r] at -1.4 1.1

\put{{\tiny $\beta_1$}}[r] at -2.4 0.1

\put{{\tiny $\beta_2$}}[l] at 0.4 2.1

\put{{\tiny $\beta_2$}}[l] at 1.4 1.1

\put{{\tiny $\beta_2$}}[l] at 2.4 0.1

\put{{\tiny $\alpha_2$}} at -1.1 0.5

\put{{\tiny $\beta_2$}}[l] at -0.75 0.5

\put{{\tiny $\alpha_2$}} at -2.1 -0.5

\put{{\tiny $\beta_2$}}[l] at -1.75 -0.5  

\put{{\tiny $\alpha_1$}} at 1.1 0.5

\put{{\tiny $\beta_1$}}[r] at 0.75 0.5

\put{{\tiny $\alpha_1$}} at 2.1 -0.5

\put{{\tiny $\beta_1$}}[b] at 1.75 -0.24

\put{{\tiny $\alpha_1$}} at -1.15 -0.5

\put{{\tiny $\beta_1$}}[r] at -1.35 -0.25  

\put{{\tiny $\alpha_1$}} at 0.85 -0.5

\put{{\tiny $\beta_1$}}[r] at 0.45 -0.5

\put{{\tiny $\beta_2$}}[l] at -0.45 -0.5 

\put{{\tiny $\alpha_2$}} at -0.85 -0.5

\put{{\tiny $\beta_2$}}[l] at 1.55 -0.5

\put{{\tiny $\alpha_2$}} at 1.15 -0.5

\arrow <0.25cm> [0.1,0.3] from -0.1 1.9 to -0.9 1.1

\arrow <0.25cm> [0.1,0.3] from 0.1 1.9 to 0.9 1.1

\arrow <0.25cm> [0.1,0.3] from -1.1 0.9 to -1.9 0.1

\arrow <0.25cm> [0.1,0.3] from -2.1 -0.1 to -2.9 -0.9

\arrow <0.25cm> [0.1,0.3] from -2.1 -0.1 to -2.9 -0.9

\arrow <0.25cm> [0.1,0.3] from 1.1 0.9 to 1.9 0.1

\arrow <0.25cm> [0.1,0.3] from 2.1 -0.1 to 2.9 -0.9

\arrow <0.25cm> [0.1,0.3] from -1 0.9 to -1 0.15

\arrow <0.25cm> [0.1,0.3] from 1 0.9 to 1 0.15

\arrow <0.25cm> [0.1,0.3] from -2 -0.1 to -2 -0.85

\arrow <0.25cm> [0.1,0.3] from 2 -0.1 to 2 -0.85

\arrow <0.25cm> [0.1,0.3] from 1.05 -0.1 to 1.45 -0.9

\arrow <0.25cm> [0.1,0.3] from -0.95 -0.1 to -0.55 -0.9

\arrow <0.25cm> [0.1,0.3] from -1.05 -0.1 to -1.45 -0.9

\arrow <0.25cm> [0.1,0.3] from 0.95 -0.1 to 0.55 -0.9

\setquadratic

\plot -0.1 2   -0.5  1.8    -0.9 1.2 /

\plot -1.1 1   -1.5  0.8    -1.9 0.2 /

\plot -2.1 0   -2.5  -0.2    -2.9 -0.8 /

\plot 0.1 2   0.5  1.8    0.9 1.2 /

\plot 1.1 1   1.5  0.8    1.9 0.2 /

\plot 2.1 0   2.5  -0.2    2.9 -0.8 /

\plot  -0.95 0.9 -0.8 0.5  -0.95 0.1 /

\plot  0.95 0.9 0.8 0.5  0.95 0.1 /

\plot  -1.95 -0.1 -1.8 -0.5 -1.95 -0.9 /

\plot  1.95 -0.1 1.8 -0.5 1.95 -0.9 /

\plot 1.075 0 1.4 -0.4275 1.55 -0.85 /

\plot -0.925 0 -0.6 -0.4275 -0.45 -0.85 /

\plot -1.075 0 -1.4 -0.4275 -1.55 -0.85 /

\plot 0.925 0 0.6 -0.4275 0.45 -0.85 /

\arrow <0.25cm> [0.1,0.3] from -0.2 1.98 to -0.1 2

\arrow <0.25cm> [0.1,0.3] from -1.2 0.98 to -1.1 1

\arrow <0.25cm> [0.1,0.3] from -2.2 -0.02 to -2.1 0

\arrow <0.25cm> [0.1,0.3] from 0.2 1.98 to 0.1 2

\arrow <0.25cm> [0.1,0.3] from 1.2 0.98 to 1.1 1

\arrow <0.25cm> [0.1,0.3] from 2.2 -0.02 to 2.1 0

\arrow <0.25cm> [0.1,0.3] from -0.885 0.8 to -0.95 0.9

\arrow <0.25cm> [0.1,0.3] from -1.885 -0.2 to -1.95 -0.1

\arrow <0.25cm> [0.1,0.3] from 0.885 0.8 to 0.95 0.9

\arrow <0.25cm> [0.1,0.3] from 1.885 -0.2 to 1.95 -0.1

\arrow <0.25cm> [0.1,0.3] from 1.165 -0.1 to 1.075 0

\arrow <0.25cm> [0.1,0.3] from -0.835 -0.1 to -0.925 0

\arrow <0.25cm> [0.1,0.3] from 0.835 -0.1 to 0.925 0

\arrow <0.25cm> [0.1,0.3] from -1.165 -0.1 to -1.075 0

\setdots

\arrow <0cm> [0,0] from 0.9 2.9 to  0.1 2.1

\arrow <0cm> [0,0] from -3.1 -1.1 to  -3.9 -1.9

\arrow <0cm> [0,0] from -2 -1.1 to  -2 -1.9

\arrow <0cm> [0,0] from -1.5 -1.1 to -1.5 -1.9

\arrow <0cm> [0,0] from -0.5 -1.1 to -0.5 -1.9

\arrow <0cm> [0,0] from 0.5 -1.1 to 0.5 -1.9

\arrow <0cm> [0,0] from 1.5 -1.1 to 1.5 -1.9

\arrow <0cm> [0,0] from 2 -1.1 to 2 -1.9

\arrow <0cm> [0,0] from 3.1 -1.1 to 3.9 -1.9

\endpicture
\]
Since every vertex receives an edge labelled $\beta_1$, $\beta_2$, the vertices in $\Omega_\ell$ are distinguished by which labelled paths of length $\ell$ involving the symbols $\alpha_1 , \alpha_2$ they receive.

For a word $w$ in the symbols $\alpha_1 , \alpha_2$, we abuse our notation to denote the set of vertices comprising $r(w)$ by $w$. We then have
\begin{align*}
\Omega_1 &= \{ \alpha_1 , \alpha_2 \} \\
\Omega_2 &= \{ \alpha_1 \alpha_1 , \alpha_1 \alpha_2 , \alpha_2 \alpha_1 , \alpha_2 \alpha_2 \} \\
\Omega_3 &=  \{ \alpha_1 \alpha_1 \alpha_1 , \alpha_1 \alpha_1 \alpha_2 , \alpha_1 \alpha_2 \alpha_1 , \alpha_1 \alpha_2 \alpha_2 , \alpha_2 \alpha_1 \alpha_1 , \alpha_2 \alpha_1 \alpha_2 , \alpha_2 \alpha_2 \alpha_1 , \alpha_2 \alpha_2 \alpha_2 \} \\
\vdots & \\
\Omega_\ell &= \{ \alpha_1^\ell , \alpha_1^{\ell-1} \alpha_2 , \alpha_1^{\ell-2} \alpha_2 \alpha_1 , \alpha_1^{\ell-2} \alpha_2 \alpha_2 , \ldots , \alpha_2^\ell \} .
\end{align*}
For $\ell=1$ we have
\begin{align*}
(1 - \Phi)_1 ( \chi_{\alpha_1} ) &= - \chi_{\alpha_1 \alpha_1} - 2 \chi_{\alpha_1 \alpha_2} - \chi_{\alpha_2 \alpha_2} \\
(1 - \Phi)_1 ( \chi_{\alpha_2} ) &=  - \chi_{\alpha_1 \alpha_1} - 2 \chi_{\alpha_2 \alpha_1} - \chi_{\alpha_2 \alpha_2}.
\end{align*}
It is straightforward to see that $\operatorname{ker} \, (1 - \Phi)_1 = \{0 \}$. Let
\begin{equation*}
x_{11} =  \chi_{\alpha_1 \alpha_1} + \im (1 - \Phi)_1 , x_{12} =  \chi_{\alpha_1 \alpha_2} + \im (1 - \Phi)_1 , x_{21} =  \chi_{\alpha_2 \alpha_1}  + \im (1 - \Phi)_1 , x_{22} =  \chi_{\alpha_2 \alpha_2} + \im (1 - \Phi)_1.
\end{equation*}
Then $2(x_{11}+x_{12}+x_{21}+x_{22})=0$ since
\begin{equation*}
(1 - \Phi )_1 ( \chi_{\alpha_1} + \chi_{\alpha_2} ) = 2 ( \chi_{\alpha_1 \alpha_1} + \chi_{\alpha_1 \alpha_2} + \chi_{\alpha_2 \alpha_1}  + \chi_{\alpha_2 \alpha_2}).
\end{equation*}
It follows that $\operatorname{coker} \, (1 - \Phi)_1 \cong \Z^2 \times \left(\Z / 2 \Z \right)$, generated by $x_{12}, x_{22}, x_{11}+x_{12}+x_{21}+x_{22}$.

As every vertex emits an edge labelled $\alpha_1$ and an edge labelled $\alpha_2$, we have $r ( w , \alpha_i ) = w \alpha_i$ for $i=1,2$ and all words $w$ in the symbols $\alpha_1 , \alpha_2$. Furthermore $r ( w \alpha_i , \beta_j ) = w$ if $i=j$ and is empty if $i \neq j$. Since $[w]_\ell = [ \alpha_1 w ]_{\ell+1} \cup [ \alpha_2 w ]_{\ell+1}$ for all words $w\in\Omega_\ell$, we have $i_\ell ( \chi_w ) = \chi_{\alpha_1 w} + \chi_{\alpha_2 w}$, and so
\begin{align*}
(1 - \Phi)_2 ( \chi_{\alpha_1 \alpha_1} ) &= - \chi_{\alpha_1 \alpha_1 \alpha_1} - \chi_{\alpha_1 \alpha_1 \alpha_2} - \chi_{\alpha_1 \alpha_2 \alpha_1} - \chi_{\alpha_2 \alpha_2 \alpha_1}  \\
(1 - \Phi)_2 ( \chi_{\alpha_1 \alpha_2} ) &= - \chi_{\alpha_1 \alpha_1 \alpha_1} + \chi_{\alpha_1 \alpha_1 \alpha_2} -2  \chi_{\alpha_1 \alpha_2 \alpha_1} - \chi_{\alpha_1 \alpha_2 \alpha_2} -  \chi_{\alpha_2 \alpha_1 \alpha_1} + \chi_{\alpha_2 \alpha_1 \alpha_2} - \chi_{\alpha_2 \alpha_2 \alpha_1}    \\
( 1 - \Phi)_2 ( \chi_{\alpha_2 \alpha_1} ) &= - \chi_{\alpha_1 \alpha_1 \alpha_2} + \chi_{\alpha_1 \alpha_2 \alpha_1} -  \chi_{\alpha_1 \alpha_2 \alpha_2}   - \chi_{\alpha_2 \alpha_1 \alpha_1} - 2 \chi_{\alpha_2 \alpha_1 \alpha_2} + \chi_{\alpha_2 \alpha_2 \alpha_1}
  - \chi_{\alpha_2 \alpha_2 \alpha_2} \\
(1 - \Phi)_2 ( \chi_{\alpha_2 \alpha_2} ) &= - \chi_{\alpha_1 \alpha_1 \alpha_2} - \chi_{\alpha_2 \alpha_1 \alpha_2} - \chi_{\alpha_2 \alpha_2 \alpha_1}
- \chi_{\alpha_2 \alpha_2 \alpha_2} .
\end{align*}
One checks that $\operatorname{ker} \, (1 - \Phi)_2 = \{0 \}$. For a word $w\in\Omega_3$, let $x_w = \chi_w + \im (1 - \Phi)_2$. Then $2\sum_{w\in\Omega_3} x_w = 0$, because
\begin{equation*}
(1 - \Phi )_2 ( \chi_{\alpha_1 \alpha_2} + \chi_{\alpha_2 \alpha_2} + \chi_{\alpha_1 \alpha_1} + \chi_{\alpha_2 \alpha_2}  ) = 2\sum_{w\in\Omega_3} \chi_{w}.
\end{equation*}
It follows that  $\operatorname{coker} \, (1 - \Phi)_2 \cong \Z^4 \times \left( \Z /  2 \Z \right)$, generated by $x_{112}, x_{122}, x_{212}, x_{222}, \sum_{w\in\Omega_3} x_w$.

Since $\tilde{i}_2 ( x_{12} ) = x_{112} + x_{212}$, $\tilde{i}_2 ( x_{22} ) = x_{122} + x_{222}$ and $\tilde{i}_2 ( x_{11} + x_{12} + x_{21} + x_{22} ) = \sum_{w\in\Omega_3} x_w$ we see that $\tilde{i}_2 : \Z^2 \times \left( \Z / 2 \Z \right) \to \Z^4 \times \left( \Z /  2 \Z \right)$ is the identity on the cyclic group of order $2$ and an injection on the free abelian part.

In general we have for a word $w$ of length $n$ in $\Omega_n$ that
\begin{equation*}
(1 - \Phi)_n ( \chi_w ) = \chi_{\alpha_1 w} + \chi_{\alpha_2 w} - \chi_{w \alpha_1} - \chi_{w \alpha_2} - \chi_{\alpha_1 \alpha_1 w_{n-1}}
- \chi_{\alpha_2 \alpha_1 w_{n-1}}- \chi_{\alpha_1 \alpha_2 w_{n-1}}- \chi_{\alpha_2 \alpha_2 w_{n-1}}
\end{equation*}
where $w_{n-1}$ represents the first $n-1$ symbols of $w$. Again, a short calculation
shows that $\operatorname{ker} \, (1 - \Phi)_n = \{ 0 \}$. For a word $w\in\Omega_{n+1}$, let $x_w = \chi_w + \im (1 - \Phi)_n$. Then $2\sum_{w\in\Omega_{n+1}} x_w = 0$, because
\begin{equation*}
(1 - \Phi )_n \Bigg( \sum_{w'\in\Omega_n} \chi_{w'} \Bigg) = 2\sum_{w\in\Omega_{n+1}} \chi_{w}.
\end{equation*}
Using this, one can show that $\operatorname{coker} \, (1 - \Phi)_n \cong \Z^{2^n} \times \left( \Z /  2 \Z \right)$, generated by $\{x_{w'2} : w'\in\Omega_n\}\cup\{\sum_{w\in\Omega_{n+1}} x_w\}$.

Observe that $\tilde{i}_n$ is induced by the map $i_n$ which only adds symbols to the beginning of words. It follows that $\tilde{i}_n: \Z^{2^n} \times \left( \Z /  2 \Z \right) \to \Z^{2^{n+1}} \times \left( \Z /  2 \Z \right)$ is the identity on the cyclic group of order $2$ and an injection on the free abelian part. From Theorem \ref{thm:howtocompute} we have that
\[
K_0 ( C^* ( E_2 , \mathcal{L}_2 , \mathcal{E}_2^{0,-} ) ) \cong
\left( \coprod_{i=1}^\infty \Z \right) \times \left( {\Z} / 2 {\Z} \right)
\text{ and } K_1 ( C^* ( E_2 , {\mathcal L}_2 , {\mathcal E}_2^{0,-}) ) \cong 0 .
\]
This confirms the calculations done in \cite[\S 3]{km}. It is worth
noting that $C^* ( E_2 , \mathcal{L}_2 , \mathcal{E}_2^{0,-} ) $ is
unital (the unit is $p_{r(\alpha_1)}+p_{r(\alpha_2)}$) and has a $K_0$-group which is not finitely generated, and so cannot be isomorphic to a graph algebra.
\end{example}

\begin{example}
Consider the following left-resolving labelled graph $( E, \labe )$ over the infinite alphabet $\{0,1\} \times \Z$:
\begin{equation*}
\beginpicture

\setcoordinatesystem units <2cm,2cm>

\setplotarea x from -2 to 3, y from 0 to 1

\put{$\bullet$} at -1 1

\put{\tiny $(v,0)$}[b] at -1 1.1

\put{$\bullet$} at 0 1

\put{\tiny $(v,1)$}[b] at 0 1.1

\put{$\bullet$} at 1 1

\put{\tiny $(v,2)$}[b] at 1 1.1

\put{$\bullet$} at 2 1

\put{\tiny $(v,3)$}[b] at 2 1.1

\put{$\bullet$} at -1 0

\put{\tiny $(w,0)$}[t] at -1 -0.1

\put{$\bullet$} at 0 0

\put{\tiny $(w,1)$}[t] at 0 -0.1

\put{$\bullet$} at 1 0

\put{\tiny $(w,2)$}[t] at 1 -0.1

\put{$\bullet$} at 2 0

\put{\tiny $(w,3)$}[t] at 2 -0.1

\put{$\dots$} at -1.5 1

\put{$\dots$} at 2.5 1

\put{$\dots$} at -1.5 0

\put{$\dots$} at 2.5 0

\arrow <0.25cm> [0.1,0.3] from -0.9 1 to -0.1 1

\put{\tiny $(0,0)$}[b] at -0.5 1.1

\arrow <0.25cm> [0.1,0.3] from 0.1 1 to 0.9 1

\put{\tiny $(0,1)$}[b] at 0.5 1.1

\arrow <0.25cm> [0.1,0.3] from 1.1 1 to 1.9 1

\put{\tiny $(0,2)$}[b] at 1.5 1.1

\arrow <0.25cm> [0.1,0.3] from -0.9 0.9 to -0.1 0.1

\put{\tiny $(1,0)$}[r] at -0.8 0.7

\arrow <0.25cm> [0.1,0.3] from 0.1 0.9 to 0.9 0.1

\put{\tiny $(1,1)$}[r] at 0.2 0.7

\arrow <0.25cm> [0.1,0.3] from 1.1 0.9 to 1.9 0.1

\put{\tiny $(1,2)$}[r] at 1.2 0.7

\arrow <0.25cm> [0.1,0.3] from -0.9 0.1 to -0.1 0.9

\put{\tiny $(1,0)$}[r] at -0.8 0.3

\arrow <0.25cm> [0.1,0.3] from 0.1 0.1 to 0.9 0.9

\put{\tiny $(1,1)$}[r] at 0.2 0.3

\arrow <0.25cm> [0.1,0.3] from 1.1 0.1 to 1.9 0.9

\put{\tiny $(1,2)$}[r] at 1.2 0.3

\endpicture
\end{equation*}
We write $E^0 = \{ ( v, i ) , ( w , i ) : i \in \Z \}$, and label the edges as shown. Since $(v,i)$ is the only vertex which receives an edge labelled $(0,i-1)$, we have $[(v,i) ]_\ell = \{ (v,i) \}$ for all $i \in \Z$ and $\ell \ge 1$. Since $(w,i)$ is the only vertex which receives an edge with label $(1,i-1)$, but does not receive any edge with label $(0,j)$ for any $j \in \Z$, we have $[(w,i) ]_\ell = \{ (w,i) \}$ for all $i \in \Z$ and $\ell \ge 1$. We will simply identify $[(v,i) ]_\ell = \{ (v,i) \}$ with $(v,i)$ and $[(w,i) ]_\ell = \{ (w,i) \}$ with $(w,i)$ for all $i \in \Z$. We then have that $\Omega_\ell = E^0$ for all $\ell \ge 1$.

Since $\Omega_\ell = E^0$ for each $\ell \ge 1$, and $E^0$ is infinite, we have
\begin{equation*}
\Z ( \Omega_\ell ) = \bigoplus_{E^0} \Z = \spa_{\Z} \{ \chi_{(v,i)} , \chi_{(w,i)} : i \in \Z \} .
\end{equation*}
Since $(v,i)$ only connects to $(v,i+1)$ and $(w,i+1)$ and $(w,i)$ only connects to $(v,i+1)$ for all $i \in \Z$, we have
\begin{align}
(1 - \Phi)_\ell ( \chi_{(v,i)} ) &= \chi_{(v,i)} - \chi_{(v,i+1)} - \chi_{(w,i+1)} \text{ and} \label{eq:one} \\
(1 - \Phi)_\ell ( \chi_{(w,i)} ) &= \chi_{(w,i)} - \chi_{(v,i+1)} .  \label{eq:two}
\end{align}
We claim that $\operatorname{ker} \, (1 - \Phi)_\ell = \{ 0 \}$ for all $\ell \ge 1$.

Suppose, for contradiction, that $b  \in \operatorname{ker} ( (1 - \Phi)_\ell ) \subseteq \oplus_{E^0}  \Z$ is a nonzero vector. Since $b = \sum_{i \in \mathbf{Z}} b_{(v,i)} \chi_{(v,i)} + b_{(w,i)} \chi_{(w,i)}$ is in the direct sum there must be a maximum $i$ such that not both $b_{(v,i)}$ and $b_{(w,i)}$ are 0. Then by \eqref{eq:one} we have $b_{(v,i)}=b_{(v,i+1)}+b_{(w,i+1)}=0$, and by \eqref{eq:two} we have $b_{(w,i)}=b_{(v,i+1)}=0$ which is a contradiction. Hence $\operatorname{ker} \, (1 - \Phi)_\ell = \{ 0 \}$ for all $\ell \ge 1$.

Now we compute the cokernel of $( 1 - \Phi)_\ell$. From Equations \eqref{eq:one} and \eqref{eq:two} we see that
\begin{equation} \label{eq:gettingridofvs}
( 1 - \Phi)_\ell ( \chi_{(v,i)} - \chi_{(w,i)} ) = \chi_{(v,i)} - \chi_{(w,i)} - \chi_{(w,i+1)} .
\end{equation}
Repeated use of Equation \eqref{eq:gettingridofvs} shows that every vector in $\Z ( \Omega_\ell)$ is equivalent, using vectors in $\operatorname{Im} \, (1 - \Phi)_\ell$, to a vector which has all $(v,i)$--coordinates zero. From Equations \eqref{eq:one} and \eqref{eq:two} we also see that
\begin{equation} \label{eq:gettingridofws}
(1 - \Phi)_\ell ( \chi_{(v,i)} + \chi_{(w,i-1)} - \chi_{(w,i)} ) = \chi_{(w,i-1)} - \chi_{(w,i)} - \chi_{(w,i+1)}.
\end{equation}
Repeated use of Equation \eqref{eq:gettingridofws} shows that every vector in $\Z ( \Omega_\ell)$ is equivalent, using vectors in $\operatorname{Im} \, (1 - \Phi)_\ell$, to a vector which has all $(v,i)$-coordinates zero and all $(w,i)$-coordinates zero except for $(w,0)$ and $(w,1)$, with no relation between them. Hence $\operatorname{coker} \, (1 - \Phi)_\ell$ is generated by  $\chi_{(w,0)} + \im (1 - \Phi)_\ell$ and $ \chi_{(w,1)} + \im (1 - \Phi)_\ell$ for all $\ell \ge 1$ and so we may conclude that $\operatorname{coker} \, (1 - \Phi)_\ell \cong \Z^2$ for all $\ell \ge 1$.

Since $\tilde{i}_\ell$ is the identity map for all $\ell \ge 1$, we have
\begin{equation*}
K_0 ( C^* ( E , \mathcal{L} , \mathcal{E}^{0,-} ) ) \cong \mathbf{Z}^2 \text{ and } K_1 ( C^* ( E , \mathcal{L} , \mathcal{E}^{0,-} ) ) \cong \{ 0 \} .
\end{equation*}
\end{example}

\appendix

\section{$C^*$-algebras of non-normal labelled spaces}

In this appendix we give an example showing that if $(E,\mathcal{L},\mathcal{B})$ is a weakly left-resolving labelled space which is not normal, and we define a representation of $(E,\mathcal{L},\mathcal{B})$ as in Definition \ref{lgdef} and $C^*(E,\mathcal{L},\mathcal{B})$ as in Definition \ref{def:celb}, then $C^*(E,\mathcal{L},\mathcal{B})$ might not satisfy the gauge invariant uniqueness theorem (Corollary \ref{cor:giut}).  We will then describe how to modify the definition of a representation of a non-normal weakly left-resolving labelled space, and consequently of $C^*(E,\mathcal{L},\mathcal{B})$, in order for $C^*(E,\mathcal{L},\mathcal{B})$ to satisfy the gauge invariant uniqueness theorem as well as Theorem \ref{theorem:main}, Corollary \ref{corol:iota}, and Corollary \ref{cor:nuclear}.

\begin{example}
Let $E$ be the graph with $E^0=\{v_0,v_1\}$, $E^1=\{e_n:n\in\N\}$, $r(e_0)=s(e_0)=v_0$, and $r(e_n)=s(e_n)=v_1$ for $n>0$. Define a labelling map $\mathcal{L}:E^1\to\{a_n:n\in\{1,2,\dots\}\}$ by $\mathcal{L}(e_0)=a_1$ and $\mathcal{L}(e_n)=a_n$ for $n>0$.
\begin{equation*}
\beginpicture
\setcoordinatesystem units <2cm,1cm>

\setplotarea x from -3.1 to 3, y from  0.3  to 1

\put{$\bullet$} at -1 0

\put{$v_0$}[b] at -1 -0.5

\put{$\bullet$} at 1 0

\put{$v_1$}[b] at 1 -0.5

\put{$a_1$}[b] at -1 1.2

\circulararc 360 degrees from -1 0 center at -1 0.5

\arrow <0.25cm> [0.1,0.8] from -1 1 to -0.95 1

\put{$a_1$}[b] at 1 1.2

\circulararc 360 degrees from 1 0 center at 1 0.5

\arrow <0.25cm> [0.1,0.8] from 1 1 to 1.05 1

\put{$a_2$}[b] at 1 2.2

\circulararc 360 degrees from 1 0 center at 1 1

\arrow <0.25cm> [0.1,0.8] from 1 2 to 1.05 2

\put{$a_3$}[b] at 1 3.2

\circulararc 360 degrees from 1 0 center at 1 1.5

\arrow <0.25cm> [0.1,0.8] from 1 3 to 1.05 3

\put{$\vdots$}[b] at 1 4

\endpicture
\end{equation*}
Then $(E,\mathcal{L})$ is a left-resolving labelled graph. Let $\bool=\{E^0,\{v_1\}\}$. Then $(E,\mathcal{L},\mathcal{B})$ is a weakly left-resolving labelled space, but it is not normal.

We will now construct two representations of $(E,\mathcal{L},\mathcal{B})$. Let $H_1$ be a Hilbert space with an orthonormal basis $(e_{(x,n)})_{(x,n)\in (\N\cup \N^\N)\times\Z}$ indexed by $(\N\cup \N^\N)\times\Z$ (we can, for example, let $H_1$ be $l^2((\N\cup \N^\N)\times\Z)$) and define bounded operators $q_{E^0}, q_{\{v_1\}}, t_{a_1}, t_{a_2},\dots$ on $H_1$ in the following way:
\begin{equation*}
	q_{E^0}e_{(x,n)}=e_{(x,n)}\text{ for all }x\in\N\cup \N^\N,\quad q_{\{v_1\}}e_{(x,n)}=
	\begin{cases}
		0&\text{if }x\in\N,\\
		e_{(x,n)}&\text{if }x\in\N^\N,
	\end{cases}
\end{equation*}
\begin{equation*}
	t_{a_1}e_{(x,n)}=
	\begin{cases}
		e_{(x+1,n)}&\text{if }x\in\N,\\
		e_{(1x,n)}&\text{if }x\in\N^\N,
	\end{cases}
	\quad
	t_{a_i}e_{(x,n)}=
	\begin{cases}
		0&\text{if }x\in\N,\\
		e_{(ix,n)}&\text{if }x\in\N^\N,
	\end{cases}
	\text{ for }i\ge 2.
\end{equation*}
Then $q_{E^0}$ and $q_{\{v_0\}}$ are projections and $ t_{a_1}, t_{a_2},\dots$ are partial isometries satisfying
\begin{itemize}

\item[(i)] If $A, B \in \mathcal{B} $, then $q_A q_B = q_{A \cap B}$ and $q_{A \cup B} = q_A + q_B - q_{A \cap B}$, where $q_\emptyset = 0$.

\item[(ii)] If $a \in \mathcal{A}$ and $A \in \mathcal{B} $, then $q_A t_a = t_a q_{r ( A, a )}$.

\item[(iii)] If $a , b \in \mathcal{A}$, then $t_a^* t_a = p_{r( a )}$ and $t_{a}^*t_{b} = 0$ unless $a = b$.

\item[(iv)] For $A \in \mathcal{B} $  with $\mathcal{L}(A E^1)$ finite and $A \cap B = \emptyset$ for all $B\in\mathcal{B}$ satisfying $B\subseteq\sink$, we have
\begin{equation*}
q_A = \sum_{a \in \mathcal{L} ( A E^1 )} t_{a} q_{r( A , a )} t_{a}^*
\end{equation*}
\end{itemize}
(condition (iv) is satisfied because there are no $A \in \mathcal{B} $  with $\mathcal{L}(A E^1)$ finite). For each $z\in\mathcal{T}$ there is an automorphism $\beta_z$ on $B(H_1)$ given by $\beta_z(x)=u_zxu_z^*$ for $x\in B(H_1)$ where $u_z$ is the unitary operator given by $u_ze_{(x,n)}=e_{(x,n+1)}$ for $(x,n)\in (\N\cup\N^\N)\times\Z$. Then $\beta_z(q_A)=q_A$ for all $A\in\bool$, and $\beta_z(t_a)=zt_a$ for all $a\in\mathcal{A}$.

Similarly, let $H_2$ be a Hilbert space with an orthonormal basis $(f_y)_{y\in\Z\cup\{\N\}^\N}$ indexed by $(\Z\cup \N^\N)\times\Z$ and define bounded operators $r_{E^0}, r_{\{v_1\}}, u_{a_1}, u_{a_2},\dots$ on $H_2$ in the following way:
\begin{equation*}
	r_{E^0}f_{(y,n)}=f_{(y,n)}\text{ for all }y\in\Z\cup \N^\N,\quad r_{\{v_1\}}f_{(y,n)}=
	\begin{cases}
		0&\text{if }y\in\Z,\\
		f_{(y,n)}&\text{if }y\in\N^\N,
	\end{cases}
\end{equation*}
\begin{equation*}
	u_{a_1}f_{(y,n)}=
	\begin{cases}
		f_{(y+1,n)}&\text{if }y\in\Z,\\
		f_{(1y,n)}&\text{if }y\in\N^\N,
	\end{cases}
	\quad
	u_{a_i}f_{(y,n)}=
	\begin{cases}
		0&\text{if }y\in\Z,\\
		f_{(iy,n)}&\text{if }y\in\N^\N,
	\end{cases}
	\text{ for }i\ge 2.
\end{equation*}
Then $r_{E^0}$ and $r_{\{v_0\}}$ are projections and $ u_{a_1}, u_{a_2},\dots$ are partial isometries satisfying
\begin{itemize}

\item[(i)] If $A, B \in \mathcal{B} $, then $r_A r_B = r_{A \cap B}$ and $r_{A \cup B} = r_A + r_B - r_{A \cap B}$, where $r_\emptyset = 0$.

\item[(ii)] If $a \in \mathcal{A}$ and $A \in \mathcal{B} $, then $r_A u_a = u_a r_{r ( A, a )}$.

\item[(iii)] If $a , b \in \mathcal{A}$, then $u_a^* u_a = r_{r( a )}$ and $u_{a}^*u_{b} = 0$ unless $a = b$.

\item[(iv)] For $A \in \mathcal{B} $  with $\mathcal{L}(A E^1)$ finite and $A \cap B = \emptyset$ for all $B\in\mathcal{B}$ satisfying $B\subseteq\sink$, we have
\begin{equation*}
r_A = \sum_{a \in \mathcal{L} ( A E^1 )} u_{a} r_{r( A , a )} u_{a}^*
\end{equation*}
\end{itemize}
For each $z\in\bf{T}$ there is an automorphism $\gamma_z$ on $B(H_2)$ given by $\gamma_z(x)=v_zxv_z^*$ for $x\in B(H_2)$ where $v_z$ is the unitary operator given by $v_ze_{(y,n)}=e_{(y,n+1)}$ for $(y,n)\in (\Z\cup\N^\N)\times\Z$. Then $\gamma_z(r_A)=r_A$ for all $A\in\bool$, and $\gamma_z(u_a)=zu_a$ for all $a\in\mathcal{A}$.

Suppose we define a representation of $(E,\mathcal{L},\mathcal{B})$ as in Definition \ref{lgdef} and let $C^*(E,\mathcal{L},\mathcal{B})$  be the $C^*$-algebra generated by a universal representation of $(E,\mathcal{L},\mathcal{B})$. Then $C^*(E,\mathcal{L},\mathcal{B})$ cannot satisfy a gauge invariant uniqueness theorem because that would imply that there is a $*$-isomorphism from the $C^*$-algebra generated by $\{q_{E^0},q_{\{v_1\}}, t_{a_1}, t_{a_2},\dots\}$ to the $C^*$-algebra generated by $\{r_{E^0},r_{\{v_1\}}, u_{a_1}, u_{a_2},\dots\}$ mapping $q_{E^0}$ to $r_{E^0}$, $q_{\{v_1\}}$ to $r_{\{v_1\}}$, and $t_{a_i}$ to $u_{a_i}$ for each $i$.  This cannot be the case since $q_{E^0}-q_{\{v_1\}}\ne (q_{E^0}-q_{\{v_1\}})t_{a_1}t_{a_1}^*$ whereas $r_{E^0}-r_{\{v_1\}}= (r_{E^0}-r_{\{v_1\}})u_{a_1}u_{a_1}^*$.
\end{example}

\begin{remark} \label{remark:nnd}
Let $(E,\mathcal{L},\mathcal{B})$ be a weakly left-resolving labelled space. Even if $(E,\mathcal{L},\mathcal{B})$ is not normal the $C^*$-algebra $\Aalg$ and the $C^*$-correspondence $(X(E,{\mathcal L},{\mathcal B}),\varphi)$ can still be defined as in Section \ref{sec:lga=cpa}. Let $$\mathcal{P}=\{(A,B): A\in\mathcal{B},\ B\in\mathcal{B}\cup\emptyset,\ B\subseteq A\}$$ and suppose $(A,B)\in\mathcal{P}$. Notice that $r(B,a)\subseteq r(A,a)$ for all $a\in\mathcal{A}$ (because $B\subseteq A$). Let $$\mathcal{A}_{(A,B)}=\{a\in\mathcal{A}: r(B,a)\ne r(A,a)\}$$ and let $$\mathcal{N}=\{A\setminus B: (A,B)\in\mathcal{P},\ \mathcal{A}_{(A,B)}=\emptyset\}.$$ An analysis of $(X(E,{\mathcal L},{\mathcal B}),\varphi)$ similar to the one given in Section \ref{sec:lga=cpa} shows that $\Oalg_{(X(E,{\mathcal L},{\mathcal B}),\varphi)}$ is the universal $C^*$-algebra generated by a family $\{ p_A : A \in \bool \}$ of projections and a family  $\{ s_a : a \in \mathcal{A} \}$ of partial isometries satisfying
\begin{itemize}

\item[(i')] If $A, B \in \bool$, then $p_A p_B = p_{A \cap B}$ and $p_{A \cup B} = p_A + p_B - p_{A \cap B}$, where $p_\emptyset = 0$.

\item[(ii')] If $a \in \mathcal{A}$ and $A \in \mathcal{B} $, then $p_A s_a = s_a p_{r ( A, a )}$.

\item[(iii')] If $a , b \in \mathcal{A}$, then $s_a^* s_a = p_{r( a )}$ and $s_{a}^*s_{b} = 0$ unless $a = b$.

\item[(iv')] For $(A,B)\in\mathcal{P}$ with $\mathcal{A}_{(A,B)}$ finite and $A\cap C=B\cap C$ for all $C\in\mathcal{N}$ we have
\begin{equation*}
p_A =p_B + \sum_{a \in \mathcal{A}_{(A,B)}} s_{a} (p_{r( A , a )}-p_{r(B,a)}) s_{a}^* .
\end{equation*}
\end{itemize}
\end{remark}

We therefore define a representation of $(E,\mathcal{L},\mathcal{B})$ and $C^*(E,\mathcal{L},\mathcal{B})$ in the following way.

\begin{definition} \label{mlgdef}
Let $( E , \mathcal{L}  , \mathcal{B} )$ be a weakly left-resolving  labelled space. A \emph{representation} of $( E , \mathcal{L} , \mathcal{B} )$ in a $C^*$-algebra consists of projections $\{ p_A : A \in \mathcal{B} \}$ and partial isometries $\{ s_a : a \in \mathcal{A} \}$ satisfying the conditions (i')--(iv') above.
\end{definition}

\begin{definition} \label{def:mcelb}
Let $( E , \mathcal{L}  , \mathcal{B}  )$ be a  weakly left-resolving  labelled space. Then $C^* ( E , \mathcal{L}  , \mathcal{B}  )$ is the $C^*$-algebra generated by a universal representation of $( E , \mathcal{L} , \mathcal{B}  )$.
\end{definition}

\begin{remark}
	If $( E , \mathcal{L}  , \mathcal{B}  )$ is a  weakly left-resolving normal labelled space, then the definition of a representation of $( E , \mathcal{L}  , \mathcal{B}  )$ given in Definition \ref{mlgdef} agrees with the definition of a representation of $( E , \mathcal{L}  , \mathcal{B}  )$ given in Definition \ref{lgdef}.
\end{remark}

\begin{remark}
	If $( E , \mathcal{L}  , \mathcal{B}  )$ is a  weakly left-resolving  labelled space such that $r(A\setminus B,a)=r(A,a)\setminus r(B,a)$ for all $(A,B)\in\mathcal{P}$ and all $a\in\mathcal{A}$ (this will always be the case if $(E,\mathcal{L})$ is left-resolving), and we let $\tbool$ be the collection of subsets of $E^0$ which can be written as a finite disjoint union of sets of the form $A\setminus B$ where $(A,B)\in\mathcal{P}$, then $( E , \mathcal{L}  , \tbool  )$ is a  weakly left-resolving normal labelled space and $C^* ( E , \mathcal{L}  , \mathcal{B}  ) \cong C^* ( E , \mathcal{L}  , \tbool  )$.
\end{remark}

\begin{remark} \label{remark:nn}
It follows from Remark \ref{remark:nnd} that with the definition of a representation of a weakly left-resolving labelled space $( E , \mathcal{L}  , \mathcal{B}  )$ given in Definition \ref{mlgdef} and the definition of the $C^*$-algebra $C^* ( E , \mathcal{L}  , \mathcal{B}  )$ given in Definition \ref{def:mcelb} that Theorem \ref{theorem:main}, and therefore also Corollary \ref{corol:iota} and Corollary \ref{cor:nuclear}, remain true if the assumption that $( E , \mathcal{L}  , \mathcal{B}  )$ is normal is dropped.
\end{remark}

We also get the following gauge-invariant uniqueness Theorem for $C^* ( E , \mathcal{L}  , \mathcal{B}  )$.

\begin{corollary}  \label{cor:giutm}
Let $(E,\labe,\bool)$ be a weakly left-resolving labelled space. Let $\{ p_A,\ s_a : A \in \mathcal{B},\ a \in \mathcal{A} \}$ be the universal representation of $( E , \mathcal{L}  , \mathcal{B}  )$ that generates $C^* ( E , \mathcal{L}  , \mathcal{B}  )$, let $\{ q_A,\ t_a : A \in \mathcal{B},\ a \in \mathcal{A} \}$ be a representation of $( E , \mathcal{L}  , \mathcal{B}  )$ in a $C^*$-algebra $\Xalg$ and let $\pi$ be the unique $*$-homomorphism from $C^* ( E , \mathcal{L}  , \mathcal{B}  )$ to $\Xalg$ that maps each $p_A$ to $q_A$ and each $s_a$ to $t_a$. Then $\pi$ is injective if and only if $q_A=q_B$ implies that $A=B$ for all $(A,B)\in\mathcal{P}$, and  for each $z\in\mathbb{T}$ there exists a $*$-homomorphism $\beta_z:C^*(\{ q_A,\ t_a : A \in \mathcal{B},\ a \in \mathcal{A} \})\to C^*(\{ q_A,\ t_a : A \in \mathcal{B},\ a \in \mathcal{A} \})$ such that $\beta_z(q_A)=q_A$ and $\beta_z(t_a)=zt_a$ for $A\in\bool$ and $a\in\mathcal{A}$.
\end{corollary}

The $K$-theory of $C^* ( E , \mathcal{L}  , \mathcal{B}  )$ can be determined by the following adaptation of Theorem \ref{theorem:ktheory}.

\begin{theorem}\label{thm:ktheorym}
Let $(E,\labe,\bool)$ be a weakly left-resolving labelled space. Let $$\mathcal{P}_J=\{(A,B)\in\mathcal{P}: \mathcal{A}_{(A,B)}\text{ is finite, and }A\cap C=B\cap C\text{ for all }C\in\mathcal{N}\},$$
let $$\boolj=\{A\setminus B:(A,B)\in\mathcal{P}_J\},$$ and let $(1-\Phi)$ be the linear map from $\spa_\Z\{\cha{A}: A\in\boolj\}$ to $\spa_\Z \{ \cha{A} :  A \in \hbool \}$ given by
\begin{equation*}
(1-\Phi) ( \cha{A\setminus B} ) = \cha{A\setminus B} - \sum_{a \in \mathcal{A}_{(A,B)}} \cha{r(A,a)\setminus r(B,a)}, \quad  (A,B)\in\mathcal{P}_J.
\end{equation*}
Then $K_1(C^*(E,\labe,\bool)) \cong \ker(1-\Phi)$, and $K_0(C^*(E,\labe,\bool)) \cong \spa_\Z\{\cha{A}: A\in\hbool\}/\im(1-\Phi)$ via $[p_A]_0 \mapsto \cha{A}+\im(1-\Phi)$ for $A \in \hbool$.
\end{theorem}

If $(E,\labe,\bool)$ is normal, then the definitions of $\boolj$ and $(1-\Phi)$ given in the theorem above agree with the definition of $\boolj$ given by Equation \eqref{eq:boolj} and the definition of $(1-\Phi)$ given in Theorem \ref{theorem:ktheory}, so the theorem above reduces to Theorem \ref{theorem:ktheory} in this case.

\end{document}